  \theoremstyle{remark}         \newtheorem{remark}{Remark}[section]
  \theoremstyle{remark} 		\newtheorem{example}{Example}[section]
  \theoremstyle{definition}		\newtheorem{definition}{Definition}[section]
  \theoremstyle{plain}			\newtheorem{theorem}{Theorem}[section]
  \theoremstyle{plain}			\newtheorem{lemma}[theorem]{Lemma}
  \theoremstyle{plain}			\newtheorem{corollary}[theorem]{Corollary}
  \theoremstyle{plain}			\newtheorem{proposition}[theorem]{Proposition}
  \DeclareMathOperator{\im}{im}				        	    %% define the image
  \DeclareMathOperator{\Hom}{Hom}				            %% define the Hom operator
  \DeclareMathOperator{\Ext}{Ext}				            %% define the Ext operator
  \DeclareMathOperator{\id}{id}				                %% define the identity map
  \newcommand{\varphantom}[1]{\mathrel{\phantom{#1}}}
  \newcommand{\pl}{\partial}
  \newcommand{\To}{\longrightarrow}
  \newcommand{\ot}{\otimes}
  \newcommand{\al}{\alpha}
  \newcommand{\be}{\beta}
  \newcommand{\ga}{\gamma}
  \newcommand{\Si}{\Sigma}
  \newcommand{\si}{\sigma}
  \newcommand{\om}{\omega}
  \newcommand{\Om}{\Omega}
  \newcommand{\de}{\delta}
  \newcommand{\De}{\Delta}
  \newcommand{\lam}{\lambda}
  \newcommand{\mbb}[1]{\mathbb{#1}}			    	%% bold
  \newcommand{\mbf}[1]{\mathbf{#1}}			    	%% bold
  \newcommand{\scr}[1]{\mathscr{#1}} 				%% sc style for mathscr
  \newcommand{\mc}[1]{\mathcal{#1}}			    	%% mc style for mathcal
  \newcommand{\mf}[1]{\mathfrak{#1}}				%% mf style for mathfrak
  \newcommand{\nan}{\mathbb{N}}			    	    %% natural numbers
  \newcommand{\inn}{\mathbb{Z}} 				    %% integral numbers
  \newcommand{\DD}{\mathbf{D}}                    %% derived category
  \newcommand{\simp}{\mathrm{simp}}
  \newcommand{\GS}{\mathrm{GS}}
  \newcommand{\Proj}{\operatorname{Proj}}
  \newcommand{\bou}{\pmb u}
  \newcommand{\bov}{\pmb v}
  \newcommand{\bow}{\pmb w}
  \newcommand{\Tot}{\operatorname{Tot}}
  \newcommand{\ppl}{{}^\circ\pl}
  \newcommand{\pmul}{{}^\circ\!\mu}
  \newcommand{\Qch}{\mathsf{Qch}}
  \newcommand{\Def}{\mathrm{Def}}
  \newcommand{\tw}{\mathrm{tw}}
  \newcommand{\ab}{\mathrm{ab}}
  \newcommand{\Hoch}{\mathrm{Hoch}}
  \numberwithin{equation}{section}
\renewcommand{\lim}{\mathrm{lim}}
\newcommand{\CC}{\mathbf{C}}
\newcommand{\aaa}{\ensuremath{\mathcal{A}}}
\newcommand{\hhh}{\ensuremath{\mathcal{H}}}
\newcommand{\ttt}{\ensuremath{\mathcal{T}}}
\title{Hochschild cohomology of projective hypersurfaces}
\author{Liyu Liu}
\address[Liyu Liu]{Departement Wiskunde-Informatica, Universiteit Antwerpen, Middelheimcampus,
Middelheimlaan 1,
2020 Antwerp, Belgium}
\curraddr{School of Mathematical Science, Yangzhou University, No.\ 180 Siwangting Road, 225002 Yangzhou, Jiangsu, China}
\email{lyliu@yzu.edu.cn}
\author{Wendy Lowen}
\address[Wendy Lowen]{Departement Wiskunde-Informatica, Universiteit Antwerpen, Middelheimcampus,
Middelheimlaan 1,
2020 Antwerp, Belgium}
\email{wendy.lowen@uantwerpen.be}
\thanks{The authors acknowledge the support of the European Union for the ERC grant No 257004-HHNcdMir and the support of the Research Foundation Flanders (FWO) under Grant No.~G.0112.13N}
\begin{document}
\maketitle

\setlength{\baselineskip}{3.5ex plus 0.3ex minus 0.1ex}     %% linebase change

\begin{abstract}
We compute Hochschild cohomology of projective hypersurfaces starting from the Gerstenhaber-Schack complex of the (restricted) structure sheaf. We are particularly interested in the second cohomology group and its relation with deformations. We show that a projective hypersurface is smooth if and only if the classical HKR decomposition holds for this group. In general, the first Hodge component describing scheme deformations has an interesting inner structure corresponding to the various ways in which first order deformations can be realized: deforming local multiplications, deforming restriction maps, or deforming both. We make our computations precise in the case of quartic hypersurfaces, and compute explicit dimensions in many examples.
\end{abstract}

\section{Introduction}

Hochschild cohomology originated as a cohomology theory for associative algebras, which is known to be closely related to deformation theory since the work of Gerstenhaber. Meanwhile, both the cohomology and the deformation side of the picture have been developed for a variety of mathematical objects, ranging from schemes \cite{swan} \cite{kontsevich2} to abelian \cite{lowenvandenberghab}, \cite{Lowen-VandenBergh:hoch} and differential graded \cite{kellerdih}, \cite{lowenvandenbergh:curv} categories. One of the first generalizations considered after the algebra case was the case of presheaves of algebras, as thoroughly investigated by Gerstenhaber and Schack \cite{gerstenhaberschack}, \cite{Gerstenhaber-Schack:survey}, \cite{gerstenhaberschack2}. For a presheaf $\aaa$, Hochschild cohomology is defined as an $\Ext$ of bimodules $\Ext_{\aaa \textrm{-} \aaa}(\aaa, \aaa)$ in analogy with the algebra case. An important tool in the study of this cohomology is the (normalized, reduced) Gerstenhaber-Schack double complex $\CC(\aaa)$. We denote its associated total complex by $\mbf C_{\GS}(\mc A)$, and the cohomology of this complex by $H^n_{\mathrm{GS}}(\aaa) = H^n\CC_{\mathrm{GS}}(\aaa)$. We have $H^n_{\mathrm{GS}}(\aaa) \cong \Ext^n_{\aaa \textrm{-} \aaa}(\aaa, \aaa)$.
Unlike what the parallel result for associative algebras may lead one to expect, in general $H^2_{\GS}(\mc A)$ is not identified with the family of first order deformations of the presheaf $\mc A$.  A correct interpretation of $H^2_{\GS}(\mc A)$ is as the family of first order deformations of $\mc A$ as a \emph{twisted} presheaf, and an explicit isomorphism
\begin{equation}\label{defint}
H^2_{\GS}(\mc A)\To\Def_{\tw}(\mc A)
\end{equation}
is given in \cite[Thm.\ 2.21]{DLL:defo-qch}. Moreover, in loc.\ cit.,  if $\aaa$ is quasi-compact semi-separated, the existence of a bijective correspondence between the first order deformations of $\mc A$ as a twisted presheaf and the abelian deformations of the category $\Qch(\mc A)$ of quasi-coherent sheaves is proven. Hence in this case there are isomorphisms $H^2_{\GS}(\mc A)\cong\Def_{\tw}(\mc A)\cong\Def_{\ab}(\Qch(\mc A))$.

Throughout, let $k$ be an algebraically closed field of characteristic zero. The situation when $\mc A$ is a presheaf of commutative $k$-algebras over $\mf V$ is very interesting. As discussed in \cite{gerstenhaberschack}, in this case the complex ${\mbf C}_{\GS}(\mc A)$ admits the Hodge decomposition of complexes
\begin{equation}
{\mbf C}_{\GS}(\mc A) = \bigoplus_{r\in\nan}{\mbf C}_{\GS}(\mc A)_r,
\end{equation}
which induces the Hodge decomposition of the cohomology groups $H^n_{\GS}(\mc A)$ in terms of the cohomology groups $H^n_{\mathrm{GS}}(\aaa)_r = H^n{\mbf C}_{\GS}(\mc A)_r$:
\begin{equation}\label{Hodgedecintro}
H^n_{\GS}(\mc A) = \bigoplus_{r\in\nan}H^n_{\mathrm{GS}}(\aaa)_r.
\end{equation}

 The zero-th Hodge complex ${\mbf C}_{\GS}(\mc A)_0$ is nothing but the simplicial cohomology complex of $\aaa$, and the first Hodge complex ${\mbf C}_{\GS}(\mc A)_1$, which is called the asimplicial Harrison complex in \cite{gerstenhaberschack}, classifies first order deformations of $\aaa$ as a commutative presheaf. Hence, in this case the map \eqref{defint} naturally restricts to
\begin{equation}
H^2_{\GS}(\mc A)_1\To\Def_{\mathrm{cpre}}(\mc A).
\end{equation}
Recall that a GS $n$-cochain has $n+1$ components coming from the double complex $\CC(\aaa)$, in particular
\begin{equation}\label{eqdecomp}
\CC^2_{\mathrm{GS}}(\aaa) = \CC^{0,2}(\aaa) \oplus \CC^{1,1}(\aaa) \oplus \CC^{2,0}(\aaa).
\end{equation}
Following \cite{DLL:defo-qch}, we usually write a GS $2$-cochain as $(m,f,c)$ corresponding to the decomposition \eqref{eqdecomp}, and we call a $2$-cocycle $(m,f,c)$ \emph{untwined} (we used the terminology \emph{decomposable} in \cite{DLL:defo-qch}) if $(m, 0, 0)$, $(0, f, 0)$ and $(0,0, c)$ are $2$-cocycles as well.
A GS $2$-class $\gamma \in H^2_{\mathrm{GS}}(\aaa)$ is called \emph{intertwined} if there is no untwined representative $(m, f, c)$ with $\gamma = [(m, f, c)]$.
Since $\aaa$ is a presheaf of commutative algebras, for a $2$-cocycle $(m, f, c)$ we automatically have that $(0,0,c)$ is a cocycle so $(m, f, c)$ is untwined if and only if $(m, 0, 0)$ is a cocycle if and only if $(0, f, 0)$ is a cocycle. Under the Hodge decomposition $$\CC^2_{\GS}(\mc A)=\CC^2_{\GS}(\mc A)_2\oplus \CC^2_{\GS}(\mc A)_1\oplus \CC^2_{\GS}(\mc A)_0,$$ any $2$-cocycle $(m,f,c)$ factors as the sum
\begin{equation}\label{decompab}
(m,f,c)=(m-m^{\ab},0,0)+(m^{\ab},f,0)+(0,0,c)
\end{equation}
of $2$-cocycles.
Locally, $m^{\ab}$ is symmetric, sending $(a,b)$ to $m(a,b)/2+m(b,a)/2$, and $m-m^{\ab}$ is anti-symmetric. Hence, $(m, f, c)$ is untwined if and only if $m^{\ab}$ is a $2$-cocycle.

There exist various presheaves $\mc A$ such that every GS $2$-class admits a representative $(m,f,c)$ with $m^{\ab}=0$, which is thus untwined. This happens if $\mc A(V)$ is smooth for all $V\in\mf V$ (see \cite[\S3.3]{DLL:defo-qch}). In this case, let $\mc T$ be the associated tangent presheaf, and thus $m$, $f$, $c$ represent classes in $H^0_{\simp}(\mf V,\wedge^2\mc T)$, $H^1_{\simp}(\mf V,\mc T)$, $H^2_{\simp}(\mf V,\mc A)$, respectively. The Hodge decomposition gives rise to isomorphisms
\[
H^n_{\GS}(\mc A)\cong \bigoplus_{p+q=n}H^p_{\simp}(\mf V, \wedge^q\mc T)
\]
in terms of the simplicial cohomology for all $n$.  A typical example of such a presheaf is obtained from a quasi-compact separated, smooth scheme. Let $(X,\mc O_X)$ be a quasi-compact separated scheme with an affine open covering $\mf V$ which is closed under intersection, and let $\mc A=\mc O_X|_{\mf V}$ be the restriction of $\mc O_X$ to the covering $\mf V$. The cohomology $H^\bullet_{\GS}(\mc A)$ turns out to be isomorphic to the Hochschild cohomology
\[
HH^\bullet(X):=\Ext^\bullet_{X\times X}(\De_*\mc O_X, \De_*\mc O_X)
\]
of the scheme $X$ where $\De\colon X\to X\times X$ is the diagonal map \cite{Lowen-VandenBergh:hoch}. If furthermore, $X$ is smooth, then the Hodge decomposition corresponds to the HKR decomposition and we obtain the familiar formula
\begin{equation}\label{eq:HKR-decomp-introduction}
HH^n(X)\cong \bigoplus_{p+q=n}H^p(X,\wedge^q\mc T_X)
\end{equation}
where $\mc T_X$ is the tangent sheaf of $X$. This formula has been proved in various different contexts and ways \cite{Gerstenhaber-Schack:survey}, \cite{kontsevich2}, \cite{swan}, \cite{yekutieli}, \cite{DLL:defo-qch}.

If $\mc A(V)$ is not smooth for some $V\in\mf V$ (for instance, $X$ has singularities), then whereas we still have $H^2_{\mathrm{GS}}(\aaa)_0 \cong H^2_{\mathrm{simp}}(\mf V, \aaa)$ and $H^2_{\mathrm{GS}}(\aaa)_2 \cong H^2_{\mathrm{simp}}(\mf V, \wedge^2 \ttt)$, the situation for the first Hodge component $H^2_{\mathrm{GS}}(\aaa)_1$ now becomes more interesting. In the decomposition \eqref{decompab} of a $2$-cocycle $(m, f, c)$, the contribution of $m^{\ab}$ is not always zero, and $[(m^{\ab}, f, 0)]$ is intertwined in general. Although cocycles of the form $(m^{\ab},f,0)$ are not as nice as $(0,f,0)$, in some situations we can simplify them in the alternative manner, that is by getting rid of $f$ rather than $m^{\ab}$. Consider the following two subgroups of
$H^2_{\mathrm{GS}}(\aaa)_1$:
\begin{itemize}
\item the subgroup $E_{\mathrm{res}}$ of $2$-classes of the form $[(0,f,0)]$;
\item the subgroup $E_{\mathrm{mult}}$ of $2$-classes of the form $[(m, 0, 0)]$.
\end{itemize}
We are interested in computing $H^2_{\mathrm{GS}}(\aaa)_1$, $E_{\mathrm{mult}}$, $E_{\mathrm{res}}$, as well as understanding the relations between those three groups, possibly depending on the scheme $X$ (with $\mc A=\mc O_X|_{\mf V}$). Note that $E_{\mathrm{res}} \cong H^1_{\simp}(\mf V,\mc T)$.

The decomposition \eqref{eq:HKR-decomp-introduction} has been generalized to the not necessarily smooth
case by Buchweitz and Flenner in \cite{Buchweitz-Flenner:decomp-Atiyah}, using the Atyiah-Chern character. In terms of the relative cotangent complex $\mbb L_{X/k}$, the generalization is given by
\begin{equation}\label{eq:Buch-Flen-decomp-introduction}
HH^n(X)\cong \bigoplus_{p+q=n}\Ext^p_X(\wedge^q_{\vphantom{X}}\mbb L_{X/k},\mc O_X)
\end{equation}
where $\wedge^q$ should be understood as derived exterior product. Their arguments are mostly established in the derived category $\DD(X)$, and an interpretation of cohomology classes in terms of GS-representatives is not immediate.

Since we need GS-representatives in order to use the deformation interpretation from \eqref{defint}, in \S\ref{subsec:quasi-iso-GS} we construct a smaller complex $\mc H^\bullet$ than ${\mbf C}_{\GS}(\mc A)$ and we give an explicit quasi-isomorphism $\mc H^\bullet\to \mbf C_{\GS}(\mc A)$. Our construction of $\hhh^\bullet$ builds on \cite{BACH} and \cite{Michler:hypersurface}, in both of which the Hochschild (co)homology of affine hypersurfaces is computed. Following their methods, in \S \ref{sec:HHaffine} we describe the Hodge components of the affine Hochschild cohomology groups in terms of the cotangent complex. The other key ingredient in our approach to the projective case is the use of a mixed complex associated to a pair of orthogonal sequences in a commutative ring, which is developed in the self-contained section \S\ref{sec:mixedcomp}.

In \S\ref{subsec:cotangent-complex} we present the cotangent complex $\mbb L_{X/k}$ in terms of twisted structure sheaves $\mc O_X(l)$, and we verify that the cohomology of $\mc H^\bullet$ agrees with \eqref{eq:Buch-Flen-decomp-introduction}, and $\mc H^\bullet$ can be considered to be a natural enhancement of \eqref{eq:Buch-Flen-decomp-introduction}.

In \S\ref{subsec:compute-cohomology}, we compute the cohomology groups of $\mc H^\bullet$ in terms of two easier complexes $\mc C^\bullet(\bou;S)$ and $\mc K^\bullet(\bov;R)$ of graded modules. Our main theorem is the following:

\begin{theorem} \label{thmmain}
Let $X\subset\mbb P^n$ be a projective hypersurface of degree $d$. Denote by $P^i$ the $i$-th cohomology group of $\mc C^\bullet(\bou;S)$ and by $Q^i$ the $i$-th cocycle group. Denote by $Z^i$ the $i$-th cocycle group of $\mc K^\bullet(\bov;R)$. Then the cohomology of $\mc H^\bullet$ is given by
\begin{enumerate}
  \item when $d>n+1$,
  \[
    H^i(\mc H^{\bullet})\cong\bigoplus_{r<i}P^{i-2r}_{r+(i-r)(d-1)}\oplus Q^{-i}_{i}\oplus \scr S(Z^{-i+n-1}_{d-i-2});
  \]
  \item when $d=n+1$,
  \[
    H^i(\mc H^{\bullet})\cong
    \begin{cases}
    \displaystyle \bigoplus_{r<i}P^{i-2r}_{r+n(i-r)}\oplus Q^{-i}_{i}, & i\neq n-1, n,\\
    \displaystyle \bigoplus_{r<i}P^{i-2r}_{r+n(i-r)}\oplus Q^{-i}_{i}\oplus k^n, & i=n-1,\\
    \displaystyle \bigoplus_{r\leq i}P^{i-2r}_{r+n(i-r)}, & i= n;
    \end{cases}
  \]
  \item when $d<n+1$,
  \[
    H^i(\mc H^{\bullet})\cong \bigoplus_{r<i}P^{i-2r}_{r+(i-r)(d-1)}\oplus Q^{-i}_{i}.
  \]
\end{enumerate}
In the above formulas, $\scr S$ is a linear map defined in \eqref{eq:scriptS}, and the subscripts of $P^\bullet$, $Q^\bullet$, $Z^\bullet$ stand for the degrees of homogeneous elements in $P^\bullet$, $Q^\bullet$, $Z^\bullet$.
\end{theorem}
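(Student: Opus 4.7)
The plan is to realize $\mc H^\bullet$ explicitly as the total complex of a bicomplex of graded modules and compute it via the associated spectral sequence. Recall from \S\ref{subsec:cotangent-complex} that the cotangent complex of $X = V(F) \subset \mbb{P}^n$ has a two-term presentation $\mc O_X(-d) \xrightarrow{\diffd F} \Omega^1_{\mbb{P}^n}|_X$. Substituting its derived exterior powers into $\mc H^\bullet$ produces a bicomplex whose two differentials correspond respectively to contraction with $\diffd F$, that is, multiplication by the sequence $\bou$ of partial derivatives of $F$ on suitable twisted pieces, and to the \v{C}ech-type differential on $\mbb{P}^n$ coming from the cover $\mf V$.

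First I would compute the horizontal $E_1$-page by taking \v{C}ech cohomology on $\mbb{P}^n$ of the twisted line bundles $\mc O(l)$ appearing in each column. By Bott's vanishing, each such $H^*(\mbb{P}^n, \mc O(l))$ is concentrated either in degree $0$ (when $l \geq 0$) or in degree $n$ (when $l \leq -n-1$), producing a clean two-row $E_1$-page. The degree-$0$ contributions assemble into the complex $\mc C^\bullet(\bou; S)$ from \S\ref{subsec:compute-cohomology}, whose cohomology and cocycles are exactly $P^\bullet$ and $Q^\bullet$; the degree-$n$ contributions assemble, via Serre duality on $\mbb{P}^n$, into the complex $\mc K^\bullet(\bov; R)$ of graded $R$-modules, and the linear map $\scr S$ of \eqref{eq:scriptS} records how its cocycles are transported back into the cohomology of $\mc H^\bullet$.

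The three-case split of the theorem reflects the relative position of these two rows in the $E_2$-page. When $d > n+1$ the two rows occupy disjoint total degrees, so both contribute independently and one reads off case (1). When $d < n+1$ the top row is identically zero in the relevant bidegrees by degree count, yielding case (3). The boundary value $d = n+1$ is critical: the two rows now meet, producing a single nontrivial higher differential whose kernel and cokernel account for the isolated $k^n$ summand in degree $n-1$ and for the extra term $r = i$ appearing in the degree-$n$ formula. This degeneration mirrors the well-known fact that $X$ is Calabi-Yau exactly when $d = n+1$, so that $\omega_X = \mc O_X(d-n-1)$ is trivial.

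The main obstacle will be twofold. First, tracking the precise graded shifts, so that the subscripts $r + (i-r)(d-1)$, $i$, and $d - i - 2$ match the correct graded pieces of $P$, $Q$, and $Z$; this requires carefully bookkeeping the twist $\mc O_X(-id)$ contributed by the $i$-th exterior power term together with the cohomological degree shift on $\mbb{P}^n$. Second, in the case $d = n+1$, computing the critical differential precisely enough to identify its kernel and cokernel with the factor $k^n$ and with the additional top-row contribution, which is where the explicit form of $\scr S$ in \eqref{eq:scriptS} is essential. Once these two points are established, the statement reduces to collating the surviving $E_2$-entries in each of the three degree ranges.
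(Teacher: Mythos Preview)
Your overall strategy---compute the bicomplex $\mc H_r^{\bullet,\bullet}$ via the row-filtration spectral sequence and split into three cases according to the sign of $d-(n+1)$---is exactly the paper's approach. However, several of your intermediate claims are incorrect or too vague to carry the argument.

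First, the \v{C}ech complex in $\mc H_r^{\bullet,\bullet}=\check{\mbf C}^{\prime p}(\mf U,\mc F_r^q)$ is taken on $X$, not on $\mbb P^n$. The relevant cohomology is $H^*(X,\mc O_X(m))$, which (via $0\to\mc O(m-d)\to\mc O(m)\to\mc O_X(m)\to 0$) is concentrated in degrees $0$ and $n-1$, not $0$ and $n$. This matters: the top row sits at $q=n-1$, and the potentially nonzero higher differential is $d_n\colon{}^{I\!I}E^{r,n-1}_{r,n}\to{}^{I\!I}E^{r+n,0}_{r,n}$.

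Second, the bottom row does \emph{not} assemble directly into $\mc C^\bullet(\bou;S)$. What you get at $q=0$ is $\Tot\tau^r_0$, the degree-zero piece of the total complex of the truncated double complex $\tau^r\mc K^{\bullet,\bullet}(\bou,\bov;S)$. Extracting $P^\bullet$ and $Q^\bullet$ from this requires a second, auxiliary spectral sequence (Lemma~\ref{lem:cohomo-3}), which in turn uses the computation of the cohomology of $\mc K^\bullet(\bov^\star;S)$ in Lemma~\ref{lem:cohomo-1}. This intermediate step is where the index shifts $r+(i-r)(d-1)$ actually arise, and you cannot read them off without it.

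Third, for $d>n+1$ the two rows do \emph{not} occupy disjoint total degrees: the differential $d_n$ from $(r,n-1)$ to $(r+n,0)$ has both source and target potentially nonzero when $r=n$ and $d\geq 2n+1$. The paper handles $r\le n-1$ and $r\ge n+1$ by degree/rank reasons, but for $r=n$ it performs an explicit diagram chase to show the differential lands in a coboundary of $\mc K^0(\bou;S)_{nd}$ and hence vanishes in $P^0_{nd}$. Your degree-disjointness claim skips this.

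Fourth, the case $d=n+1$ is substantially harder than you indicate. There is not a single higher differential but one for each $r\ge 0$, and their behaviour depends on $r$: for $0\le r\le n-1$ the differential is zero, for $r=n$ it is injective with cokernel $P^{-n}_n$, and for $r\ge n+1$ it is an isomorphism $k\to k$. Establishing the last two facts requires the explicit chain-level construction of Lemma~\ref{lem:k-to-k}, where one writes down a specific lift $(c^{0,n-1},\ldots,c^{n-1,0})$ starting from $x_1^{-1}\cdots x_n^{-1}\,\partial F/\partial x_0$ and tracks its image to $(-1)^{n-1}\bou^\star$. This is the technical heart of case~(2), and the $k^n$ summand in degree $n-1$ comes precisely from the $n$ values $r=0,\ldots,n-1$ for which the differential vanishes; the extra $r=i$ term in degree $n$ comes from the injective-but-not-surjective differential at $r=n$. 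Without this explicit computation the identification of these pieces is not justified.
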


As an application of Theorem \ref{thmmain}, we give a cohomological characterization of smoothness for projective hypersurfaces in \S \ref{subsec:char-smooth}. Recall that an affine hypersurface $\mathrm{Spec}(A)$ is smooth if and only if the first Hodge component $H^2_{(1)}(A,A)$ vanishes (Remark \ref{remsmooth}). In deformation theoretic terms, this corresponds to the fact that $A$ has only trivial commutative deformations. For a projective hypersurface $X$ with restricted structure sheaf $\mc A=\mc O_X|_{\mf V}$, the parallel statement is that $X$ is smooth if and only if the first Hodge component $H^2_{\mathrm{GS}}(\aaa)_1$ coincides with its subgroup $E_{\mathrm{res}} \cong H^1(X, \ttt_X)$ which describes locally trivial scheme deformations of $X$. In other words, $X$ is smooth if and only if the classical HKR decomposition \eqref{eq:HKR-decomp-introduction} holds for the second Hochschild cohomology group of $X$ (Theorem \ref{thm:smooth-equiv}). 

In \S\ref{subsec:char-smooth}, we show that for $\mc A=\mc O_X|_{\mf V}$ with $X$ a projective hypersurface of dimension $\geq 2$, we have
\begin{equation}\label{eqplus}
H^2_{\mathrm{GS}}(\aaa)_1 = E_{\mathrm{mult}} + E_{\mathrm{res}},
\end{equation}
whence we can choose a complement $E$ of $E_{\mathrm{res}}$ inside $H^2_{\mathrm{GS}}(\aaa)_1$ such that $E \subseteq E_{\mathrm{mult}}$. Intuitively, we visualize the situation with the aid of the following diagram:
\[
\xymatrix{
\text{Hodge components:} & H^2_{\GS}(\mc A)_2 \ar@{~}[d] & & H^2_{\GS}(\mc A)_1 \ar@{~}[d]\ar@{~}[dl] & H^2_{\GS}(\mc A)_0 \ar@{~}[d] \\
\text{HKR components:} & H^0_{\simp}(\mf V,\wedge^2\mc T) \ar[d] & E \ar[dl] & H^1_{\simp}(\mf V,\mc T) \ar[d] & H^2_{\simp}(\mf V,\mc A) \ar[d] \\
\text{representatives:} & (m,0,0) & & (0,f,0) & (0,0,c)
}
\]

Remarkably, based upon the results from \S\ref{subsec:compute-cohomology}, an intertwined 2-class (that is, a class in $H^2_{\mathrm{GS}}(\aaa)_1 \setminus E_{\mathrm{mult}} + E_{\mathrm{res}}$) can only exist for a non-smooth projective curve in $\mbb P^2$ of degree $\geq 5$, and we give concrete examples of such curves of degree $\geq 6$ in \S\ref{subsec:intertwined}. We also leave the existence of  intertwined 2-classes for degree 5 curves as an open question.

In \S\ref{subsec:quartic-surface}, we study the case when $X$ is a quartic surface in $\mbb P^3$ in some detail. We show that the dimension of $H^2_{\GS}(\mc A)_1$ lies between $20$ and $32$, reaching all possible values except 30 and 31. The minimal value $H^2_{\GS}(\mc A)_1 = 20$ is reached in the smooth case, in which $X$ is a K3 surface and $H^2_{\GS}(\mc A)_1 \cong H^1(X, \ttt_X)$, as well as in some non-smooth examples like the Kummer surfaces. From our general results, we know the dimension of $E_{\mathrm{mult}}$ to be one less than the dimension of $H^2_{\GS}(\mc A)_1$.
In the smooth case we have $E_{\mathrm{mult}} \subseteq E_{\mathrm{res}}$ and for the Fermat quartic, we give a concrete description of the (19 dimensional) $E_{\mathrm{mult}}$ both by representatives of the form $(m, 0, 0)$ (commutatively deforming the affine pieces) and by equivalent representatives of the form $(0, f, 0)$ (classical picture of a smooth scheme deformation arising from glueing trivial affine deformations). For non-smooth schemes, we encounter examples in which $E_{\mathrm{res}}$ is one-dimensional (and hence $H^2_{\mathrm{GS}}(\aaa)_1 = E_{\mathrm{mult}} \oplus E_{\mathrm{res}}$) as well as examples in which $E_{\mathrm{mult}} \cap E_{\mathrm{res}} \neq 0$.

Finally, let us mention that the zero-th Hodge component $H^2_{\GS}(\mc A)_0$ is invariably one dimensional, and we know that the dimension of the second Hodge component $H^2_{\GS}(\mc A)_2$ is at least one. Although our results allow us to compute the dimension of $H^2_{\GS}(\mc A)_2$ in concrete examples, so far we have not determined the precise range of this dimension.\\

\noindent \emph{Acknowledgement}: The authors are grateful to Pieter Belmans for his interesting comments and questions concerning an earlier version of the paper, which led to the discovery of an error in \S \ref{subsec:intertwined} that is corrected in the current version.

\section{Mixed complexes associated to orthogonal sequences}\label{sec:mixedcomp}
This section is self-contained. In order to make preparations for future computations, we construct several complexes which are related to Koszul complexes, as well as quasi-isomorphisms between them.

Let $R$ be a commutative ring, and let $\bou=(u_0\ldots,u_n)$, $\bov=(v_0,\ldots,v_n)$ be two sequences in $R$. We call $(\bou, \bov)$ a \emph{pair of orthogonal sequences of length $n$ (an $n$-POS)} if
\[
\sum_{i=0}^nu_iv_i=0
\]
holds in $R$. Let $(\mc K^\bullet(\bou; R),\pl_{\bou})$ be the Koszul cochain complex determined by $\bou$, namely, $\mc K^\bullet(\bou; R)$ is the DG $R$-algebra $\wedge^\bullet (Re_0\oplus\cdots \oplus Re_n)$ with $|e_i|=-1$ and $\pl_{\bou}(e_i)=u_i$. Similarly, let $(\mc K_\bullet(\bov; R),\pl^{\bov})=\wedge^\bullet (Rf_0\oplus\cdots \oplus Rf_n)$ be the Koszul chain complex determined by $\bov$. Applying $\Hom_R(-,R)$ to $\mc K_\bullet(\bov; R)$, we obtain a cochain complex $\Hom^\bullet_R(\mc K_\bullet(\bov; R),R)$ whose terms are
\[
\Hom^{-p}_R(\mc K_\bullet(\bov; R),R)=\Hom_R(\mc K_p(\bov; R),R)=\bigoplus_{0\leq i_1<\cdots< i_p\leq n}R(f_{i_1}\wedge\cdots\wedge f_{i_p})^*
\]
and whose differentials are
\begin{align*}
(\pl^{\bov})^*\colon\Hom^{-p}_R(\mc K_\bullet(\bov; R),R)&\To \Hom^{-p-1}_R(\mc K_\bullet(\bov; R),R)\\
(f_{i_1}\wedge\cdots\wedge f_{i_p})^*&\longmapsto \sum_{j=0}^nv_j(f_j\wedge f_{i_1}\wedge\cdots\wedge f_{i_p})^*.
\end{align*}

Since for each $p$, the correspondence $e_{i_1}\wedge\cdots\wedge e_{i_p}\longleftrightarrow (f_{i_1}\wedge\cdots\wedge f_{i_p})^*$ establishes an isomorphism between $\mc K^{-p}(\bou; R)$ and $\Hom^{-p}_R(\mc K_\bullet(\bov; R),R)$  in a natural way. The differentials $(\pl^{\bov})^*$ induce another complex structure on $\mc K^{\bullet}(\bou; R)$ given by
\begin{align*}
\pl_{\bov}\colon\mc K^{-p}(\bou; R)&\To \mc K^{-p-1}(\bou; R)\\
e_{i_1}\wedge\cdots\wedge e_{i_p}&\longmapsto \sum_{j=0}^nv_je_j\wedge e_{i_1}\wedge\cdots\wedge e_{i_p}.
\end{align*}

\begin{remark}\label{rmk:mixed-v}
$(\mc K^\bullet(\bou;R),\pl_{\bov})$ is isomorphic to the Koszul complex determined by the sequence $\bov^\star=(v_0, -v_1,\ldots, (-1)^nv_n)$.
\end{remark}

\begin{lemma}
  $\mc K^{\bullet}(\bou,\bov; R)=(\mc K^{\bullet}(\bou; R),\pl_{\bou},\pl_{\bov})$ is a mixed complex.
\end{lemma}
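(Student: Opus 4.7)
The plan is to verify the three defining relations for a mixed complex: $\pl_{\bou}^2=0$, $\pl_{\bov}^2=0$, and the anticommutation $\pl_{\bou}\pl_{\bov}+\pl_{\bov}\pl_{\bou}=0$. The first two are essentially standard Koszul facts (note $\pl_{\bou}$ has cohomological degree $+1$ and $\pl_{\bov}$ degree $-1$), and the third is where the orthogonality hypothesis $\sum_i u_iv_i=0$ will be used.

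For $\pl_{\bou}^2=0$ I invoke the classical Koszul computation, since $(\mc K^\bullet(\bou;R),\pl_{\bou})$ is by construction the Koszul cochain complex of $\bou$. For $\pl_{\bov}^2=0$ I appeal to Remark \ref{rmk:mixed-v}, which identifies $(\mc K^\bullet(\bou;R),\pl_{\bov})$ with the Koszul complex of $\bov^\star=(v_0,-v_1,\ldots,(-1)^nv_n)$; alternatively one expands $\pl_{\bov}^2(\omega)=\sum_{j,k}v_jv_k\,e_k\wedge e_j\wedge\omega$ and observes it vanishes by the antisymmetry of the wedge, independently of the $v_j$.

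The main step is the anticommutation, and this is what I expect to be the central (though short) calculation. I will use that $\pl_{\bou}$ is a graded derivation of degree $+1$ on the exterior algebra, so for $\omega$ of any degree,
\[
\pl_{\bou}(e_j\wedge\omega)=\pl_{\bou}(e_j)\wedge\omega+(-1)^{-1}e_j\wedge\pl_{\bou}(\omega)=u_j\,\omega-e_j\wedge\pl_{\bou}(\omega),
\]
since $|e_j|=-1$. Applying this inside $\pl_{\bou}\pl_{\bov}(\omega)=\sum_j v_j\,\pl_{\bou}(e_j\wedge\omega)$ gives
\[
\pl_{\bou}\pl_{\bov}(\omega)=\Bigl(\sum_j u_jv_j\Bigr)\omega-\sum_j v_j\,e_j\wedge\pl_{\bou}(\omega)=-\pl_{\bov}\pl_{\bou}(\omega),
\]
where the first summand vanishes precisely by the POS condition, and the second is by definition $\pl_{\bov}\pl_{\bou}(\omega)$. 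This establishes $\pl_{\bou}\pl_{\bov}+\pl_{\bov}\pl_{\bou}=0$ and completes the proof.

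The only delicate point is keeping the signs straight in the Leibniz rule (because the generators $e_i$ live in degree $-1$, giving a sign $(-1)^{-1}=-1$ rather than $+1$); once that is handled, the orthogonality hypothesis does exactly the work needed to cancel the term $\sum_j u_jv_j\,\omega$, which is the only obstruction to anticommutation.
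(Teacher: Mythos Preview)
Your proof is correct and follows the same line as the paper's: both verify the anticommutation $\pl_{\bou}\pl_{\bov}+\pl_{\bov}\pl_{\bou}=0$ using the orthogonality $\sum_i u_iv_i=0$. The paper carries this out by a direct expansion on basis elements $e_{i_1}\wedge\cdots\wedge e_{i_p}$, whereas you exploit that $\pl_{\bou}$ is a graded derivation to reduce the computation to a single Leibniz step; your version is shorter and makes the role of the POS hypothesis more transparent, but the content is the same.
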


\begin{proof}
  Let us verify the equality $\pl_{\bou}\pl_{\bov}+\pl_{\bov}\pl_{\bou}=0$. On one hand,
  \begin{align*}
    \pl_{\bou}\pl_{\bov}(e_{i_1}\wedge\cdots\wedge e_{i_p})
    &=\sum_{j=0}^n\pl_{\bou}(v_je_j\wedge e_{i_1}\wedge\cdots\wedge e_{i_p})\\
    &=\sum_{j=0}^nv_j\biggl(u_{j}e_{i_1}\wedge\cdots \wedge e_{i_p}+\sum_{k=1}^{p}(-1)^{k}u_{i_k}e_j\wedge e_{i_1}\wedge\cdots \wedge \widehat{e_{i_{k}}}\wedge\cdots\wedge e_{i_p}\biggr)\\
    &=\sum_{j=0}^nu_{j}v_je_{i_1}\wedge\cdots \wedge e_{i_p}+\sum_{j=0}^n\sum_{k=1}^{p}(-1)^{k}u_{i_k}v_j e_j\wedge e_{i_1}\wedge\cdots \wedge \widehat{e_{i_{k}}}\wedge\cdots\wedge e_{i_p}\\
    &=\sum_{j=0}^n\sum_{k=1}^{p}(-1)^{k}u_{i_k}v_je_j\wedge e_{i_1}\wedge\cdots \wedge \widehat{e_{i_{k}}}\wedge\cdots\wedge \cdots\wedge e_{i_p}.
  \end{align*}
  On the other hand,
  \begin{align*}
    \pl_{\bov}\pl_{\bou}(e_{i_1}\wedge\cdots\wedge e_{i_p})&=\pl_{\bov}\biggl(\sum_{k=1}^p(-1)^{k-1}u_{i_k}e_{i_1}\wedge\cdots\wedge \widehat{e_{i_k}}\wedge\cdots\wedge e_{i_p}\biggr)\\
    &=\sum_{k=1}^p(-1)^{k-1}u_{i_k}\sum_{j=0}^nv_je_j\wedge e_{i_1}\wedge\cdots\wedge \widehat{e_{i_k}}\wedge\cdots\wedge e_{i_p}\\
    &=\sum_{j=0}^n\sum_{k=1}^p(-1)^{k-1}u_{i_k}v_je_j\wedge e_{i_1}\wedge\cdots\wedge \widehat{e_{i_k}}\wedge\cdots\wedge e_{i_p}.
  \end{align*}
  Thus $\pl_{\bou}\pl_{\bov}+\pl_{\bov}\pl_{\bou}=0$ is true.
\end{proof}

This mixed complex gives rise to a double complex $\mc K^{\bullet,\bullet}(\bou,\bov; R)$ in the first quadrant as in Figure \ref{fig:double-comp-K}. For $r\in\nan$, define $\tau^{r}\mc K^{\bullet,\bullet}(\bou,\bov;R)$ to be the quotient double complex of $\mc K^{\bullet,\bullet}(\bou,\bov;R)$ consisting of all entries whose coordinates satisfy $0\leq q\leq r$.
\begin{figure}[!htp]
\[
\xymatrix@C=8mm@C=7mm{
\vdots & \vdots & \vdots & \vdots & \iddots \\
\mc K^{-3}(\bou,\bov; R) \ar[r]^-{\pl_{\bou}}\ar[u]_-{\pl_{\bov}} & \mc K^{-2}(\bou,\bov; R) \ar[r]^-{\pl_{\bou}}\ar[u]_-{\pl_{\bov}} & \mc K^{-1}(\bou,\bov; R) \ar[r]^-{\pl_{\bou}}\ar[u]_-{\pl_{\bov}} & \mc K^{0}(\bou,\bov; R) \ar[u]_-{\pl_{\bov}} \\
\mc K^{-2}(\bou,\bov; R) \ar[r]^-{\pl_{\bou}}\ar[u]_-{\pl_{\bov}} & \mc K^{-1}(\bou,\bov; R) \ar[r]^-{\pl_{\bou}}\ar[u]_-{\pl_{\bov}} & \mc K^{0}(\bou,\bov; R) \ar[u]_-{\pl_{\bov}} \\
\mc K^{-1}(\bou,\bov; R) \ar[r]^-{\pl_{\bou}}\ar[u]_-{\pl_{\bov}} & \mc K^{0}(\bou,\bov; R) \ar[u]_-{\pl_{\bov}} \\
\mc K^{0}(\bou,\bov; R) \ar[u]_-{\pl_{\bov}}
}
\]
\caption{Double complex $\mc K^{\bullet,\bullet}(\bou,\bov; R)$}
\label{fig:double-comp-K}
\end{figure}

Suppose that $v_t$ is invertible for some $t\in \{0,1,\ldots,n\}$. Let $\bow=(u_0,\ldots,\widehat{u_t},\ldots,u_n)$, and $(\mc K^\bullet(\bow; R),\pl_{\bow})$ be the corresponding Koszul complex. Define $\iota\colon \mc K^\bullet(\bow; R)\to \mc K^{\bullet}(\bou; R)$ to be the canonical embedding morphism, and define $\pi\colon \mc K^{\bullet}(\bou; R)\to \mc K^\bullet(\bow; R)$ by
\[
\pi(e_{i_1}\wedge\cdots\wedge e_{i_p})=
\begin{cases}
  e_{i_1}\wedge\cdots\wedge e_{i_p}, & \text{if none of $i_j$ is $t$,}\\
  {}\\
  \displaystyle -\sum_{k\neq t}v_kv^{-1}_te_{i_1}\wedge\cdots\wedge e_{i_{j-1}}\wedge e_k\wedge e_{i_{j+1}}\wedge\cdots\wedge e_{i_p}, & \text{if $t=i_j$ for some $j$.}
\end{cases}
\]
for each $p$.

\begin{lemma}\label{lem:koszul-1}
  $\pi\colon \mc K^{\bullet}(\bou; R)\to \mc K^\bullet(\bow; R)$ is a morphism of complexes.
\end{lemma}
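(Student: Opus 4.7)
The plan is to recognize $\pi$ as a graded $R$-algebra homomorphism between exterior algebras and to exploit the fact that the Koszul differentials are graded derivations, so that the verification reduces to a one-line check on generators.

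First I would observe that the formula defining $\pi$ is precisely the formula for the (unique) graded $R$-algebra map
\[
\tilde{\pi}\colon\wedge^\bullet(Re_0\oplus\cdots\oplus Re_n)\To \wedge^\bullet(Re_0\oplus\cdots\oplus\widehat{Re_t}\oplus\cdots\oplus Re_n)
\]
determined on degree $-1$ generators by $\tilde{\pi}(e_i)=e_i$ for $i\neq t$ and $\tilde{\pi}(e_t)=-v_t^{-1}\sum_{k\neq t}v_ke_k$. Such an extension exists by the universal property of the exterior algebra once one checks the sole relation $\tilde{\pi}(e_t)^2=0$; but expanding gives $v_t^{-2}\sum_{k,k'\neq t}v_kv_{k'}\,e_k\wedge e_{k'}$, which vanishes because $v_kv_{k'}$ is symmetric while $e_k\wedge e_{k'}$ is antisymmetric. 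Identifying the two formulas on a standard basis element $e_{i_1}\wedge\cdots\wedge e_{i_p}$ by expanding $\tilde{\pi}(e_{i_1})\wedge\cdots\wedge\tilde{\pi}(e_{i_p})$ then shows $\pi=\tilde{\pi}$, so $\pi$ is a morphism of graded $R$-algebras.

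Next I would recall that both $\pl_{\bou}$ and $\pl_{\bow}$ are, by construction, graded $R$-derivations of degree $+1$ on their respective exterior algebras, uniquely determined by $\pl_{\bou}(e_i)=u_i$ and $\pl_{\bow}(e_i)=u_i$ (for $i\neq t$). Combining this with the algebra property of $\pi$ yields, for $\omega,\eta$ homogeneous,
\begin{align*}
\pi\pl_{\bou}(\omega\wedge\eta)&=\pi\pl_{\bou}(\omega)\wedge\pi(\eta)+(-1)^{|\omega|}\pi(\omega)\wedge\pi\pl_{\bou}(\eta),\\
\pl_{\bow}\pi(\omega\wedge\eta)&=\pl_{\bow}\pi(\omega)\wedge\pi(\eta)+(-1)^{|\omega|}\pi(\omega)\wedge\pl_{\bow}\pi(\eta),
\end{align*}
so by induction on wedge-length the identity $\pi\pl_{\bou}=\pl_{\bow}\pi$ will follow once it is verified on the generators $e_0,\ldots,e_n$.

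For $i\neq t$ one has immediately $\pi\pl_{\bou}(e_i)=\pi(u_i)=u_i=\pl_{\bow}(e_i)=\pl_{\bow}\pi(e_i)$. The only nontrivial case is $i=t$, and here the orthogonality hypothesis enters: on the one hand $\pi\pl_{\bou}(e_t)=\pi(u_t)=u_t$, while on the other hand
\[
\pl_{\bow}\pi(e_t)=-v_t^{-1}\sum_{k\neq t}v_k\pl_{\bow}(e_k)=-v_t^{-1}\sum_{k\neq t}u_kv_k=-v_t^{-1}(-u_tv_t)=u_t,
\]
using $\sum_{i=0}^n u_iv_i=0$ to rewrite $\sum_{k\neq t}u_kv_k=-u_tv_t$ and the invertibility of $v_t$ to cancel. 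The only real obstacle is the bookkeeping needed to identify $\pi$ with the algebra map $\tilde{\pi}$ (in particular the sign analysis showing $\tilde{\pi}(e_t)^2=0$); once that is in hand, the derivation/generator argument above finishes the proof.
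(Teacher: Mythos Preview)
Your proof is correct and takes a genuinely different route from the paper. The paper verifies $\pl_{\bow}\pi=\pi\pl_{\bou}$ by a direct, bare-hands computation on basis elements $e_{i_1}\wedge\cdots\wedge e_{i_p}$: it expands both sides in the case where $t$ occurs among the indices and matches the resulting sums term by term. Your argument instead identifies $\pi$ with the graded algebra map induced (by functoriality of $\wedge^\bullet$) from the $R$-linear map $e_i\mapsto e_i$ ($i\neq t$), $e_t\mapsto -v_t^{-1}\sum_{k\neq t}v_ke_k$ on the generating module, and then uses that $\pl_{\bou}$ and $\pl_{\bow}$ are graded derivations to reduce the check to degree $-1$. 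This makes the role of the orthogonality relation $\sum_i u_iv_i=0$ completely transparent: it is exactly what is needed for the single generator $e_t$. One small remark: the verification $\tilde{\pi}(e_t)^2=0$ is not actually an obstacle, since functoriality of $\wedge^\bullet$ on $R$-linear maps already guarantees the extension exists; you could simply drop that step. The paper's explicit computation has the advantage of being self-contained and parallel in style to the adjacent Lemmas~\ref{lem:koszul-2} and~\ref{lem:cd-compatible}, while your approach is shorter and would scale more gracefully to analogous statements.
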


\begin{proof}
  It suffices to prove $\pl_{\bow}\pi(e_{i_1}\wedge\cdots\wedge e_{i_p})=\pi\pl_{\bou}(e_{i_1}\wedge\cdots\wedge e_{i_p})$ when $t=i_j$ for some $j$.

  We have
  \begin{align*}
    \pl_{\bow}\pi(e_{i_1}\wedge\cdots\wedge e_{i_p})    &=-\pl_{\bow}\biggl(\sum_{k\neq t}v_kv_t^{-1}e_{i_1}\wedge\cdots\wedge e_{i_{j-1}}\wedge e_k\wedge e_{i_{j+1}}\wedge\cdots\wedge e_{i_p}\biggr)\\
    &=-\sum_{k\neq t}v_kv_t^{-1}\pl_{\bow}(e_{i_1}\wedge\cdots\wedge e_{i_{j-1}}\wedge e_k\wedge e_{i_{j+1}}\wedge\cdots\wedge e_{i_p})\\
    &=-\sum_{k\neq t}v_kv_t^{-1}\biggl(\sum_{l\neq j}(-1)^{l-1}u_{i_l} e_{i_1}\wedge\cdots\wedge e_k\wedge \cdots\wedge \widehat{e_{i_{l}}}\wedge \cdots\wedge e_{i_p}\\
    &\varphantom{=}{}+(-1)^{j-1}u_k e_{i_1}\wedge\cdots\wedge \widehat{e_{k}}\wedge \cdots\wedge e_{i_p}\biggr)\\
    &=\sum_{k\neq t}\sum_{l\neq j}(-1)^{l}u_{i_l}v_kv_t^{-1} e_{i_1}\wedge\cdots\wedge e_k\wedge \cdots\wedge \widehat{e_{i_{l}}}\wedge \cdots\wedge e_{i_p}\\
    &\varphantom{=}{}+\sum_{k\neq t}(-1)^{j}u_kv_kv_t^{-1} e_{i_1}\wedge\cdots\wedge \widehat{e_{k}}\wedge \cdots\wedge e_{i_p}\\
    &=\sum_{k\neq t}\sum_{l\neq j}(-1)^{l}u_{i_l}v_kv_t^{-1} e_{i_1}\wedge\cdots\wedge e_k\wedge \cdots\wedge \widehat{e_{i_{l}}}\wedge \cdots\wedge e_{i_p}\\
    &\varphantom{=}{}-(-1)^{j}u_t e_{i_1}\wedge\cdots\wedge \widehat{e_{t}}\wedge \cdots\wedge e_{i_p}.
  \end{align*}

  On the other hand, we have
  \begin{align*}
    \pi\pl_{\bou}(e_{i_1}\wedge\cdots\wedge e_{i_p})&=\pi\biggl(\sum_{l=1}^p(-1)^{l-1}u_{i_l}e_{i_1}\wedge\cdots\wedge \widehat{e_{i_l}}\wedge\cdots\wedge e_{i_p}\biggr)\\
    &=\sum_{l\neq j}(-1)^{l}u_{i_l}\sum_{k\neq t}v_kv_t^{-1}e_{i_1}\wedge\cdots\wedge e_k\wedge \cdots\wedge \widehat{e_{i_l}}\wedge\cdots\wedge e_{i_p}\\
    &\varphantom{=}{}+(-1)^{j-1}u_{t}e_{i_1}\wedge\cdots\wedge \widehat{e_{i_j}}\wedge\cdots\wedge e_{i_p}.
  \end{align*}
  This finishes the proof that $\pl_{\bow}\pi=\pi\pl_{\bou}$.
\end{proof}

\begin{lemma}\label{lem:koszul-2}
  For all $p$, the sequence
  \[
  0\To \mc K^{-p+1}(\bow;R)\xrightarrow[\quad]{\pl_{\bov}\iota}\mc K^{-p}(\bou;R)\xrightarrow[\quad]{\pi}\mc K^{-p}(\bow;R)\To 0
  \]
  is split exact.
\end{lemma}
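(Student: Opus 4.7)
The plan is to exhibit explicit $R$-linear splittings on both sides of the sequence, so that no further exactness check is needed. I would start by decomposing each $\mc K^{-p}(\bou;R)$ as a direct sum $M_p\oplus N_p$, where $M_p$ is the free submodule spanned by the basis wedges that do \emph{not} involve $e_t$ and $N_p$ is spanned by those that do; clearly $\iota$ identifies $\mc K^{-p}(\bow;R)$ with $M_p$. The first branch of the definition of $\pi$ acts as the identity on $M_p$, so $\pi\iota=\id_{\mc K^{-p}(\bow;R)}$, which gives both the surjectivity of $\pi$ and a section for free.

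Next I would construct a retraction $r\colon \mc K^{-p}(\bou;R)\to \mc K^{-p+1}(\bow;R)$ of $\pl_{\bov}\iota$ as $v_t^{-1}$ times the ``extract-the-factor-$e_t$'' operator: set $r$ to vanish on $M_p$, and for $i_j=t$ put
\[
r(e_{i_1}\wedge\cdots\wedge e_{i_p})=v_t^{-1}(-1)^{j-1}e_{i_1}\wedge\cdots\wedge\widehat{e_{i_j}}\wedge\cdots\wedge e_{i_p}.
\]
For $a=e_{j_1}\wedge\cdots\wedge e_{j_{p-1}}$ with no $j_\ell$ equal to $t$, expanding
\[
\pl_{\bov}\iota(a)=v_te_t\wedge a+\sum_{j\neq t,\, j\notin\{j_1,\ldots,j_{p-1}\}}v_je_j\wedge a
\]
(the omitted summands vanish by repetition), the sign $(-1)^{m-1}$ coming from sorting $e_t$ into its correct position $m$ within $e_t\wedge a$ is exactly cancelled by the sign in the definition of $r$, so $r(v_te_t\wedge a)=a$ while $r$ kills the remaining $M_p$-part. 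Hence $r\pl_{\bov}\iota=\id$, and in particular $\pl_{\bov}\iota$ is injective.

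Then I would verify $\pi\circ \pl_{\bov}\iota=0$ by direct computation: $\pi$ acts as the identity on the $j\neq t$ summands of $\pl_{\bov}\iota(a)$ displayed above, and substitutes $-\sum_{k\neq t}v_kv_t^{-1}e_k$ for $e_t$ in the $j=t$ summand, yielding
\[
\pi(\pl_{\bov}\iota(a))=-\sum_{k\neq t}v_ke_k\wedge a+\sum_{j\neq t}v_je_j\wedge a=0.
\]

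With a section of $\pi$, a retraction of $\pl_{\bov}\iota$, and $\pi\circ \pl_{\bov}\iota=0$, the map $(r,\pi)\colon \mc K^{-p}(\bou;R)\to \mc K^{-p+1}(\bow;R)\oplus \mc K^{-p}(\bow;R)$ is an isomorphism with inverse $(a,b)\mapsto \pl_{\bov}\iota(a)+\iota(b)$, yielding both exactness in the middle and the splitting in one stroke. The only real obstacle is keeping the signs in the retraction straight; note that the orthogonality relation $\sum_iu_iv_i=0$ is not needed for this lemma (it was used only for the mixed-complex statement above).
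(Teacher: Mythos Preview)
Your approach is essentially the paper's: your retraction $r$ is exactly the map the paper calls $\zeta$, and the verifications of $\pi\iota=\id$, $r\,\pl_{\bov}\iota=\id$, and $\pi\,\pl_{\bov}\iota=0$ match. The one point to tighten is the final sentence: those three identities alone (even together with the implicit $r\iota=0$) do not force $(r,\pi)$ to be an isomorphism in general---you still need either the remaining identity $\pl_{\bov}\iota\,r+\iota\pi=\id$ (which is what the paper actually computes) or, equivalently, injectivity of $(r,\pi)$, which follows immediately from your decomposition since $\ker r=M_p$ and $\pi|_{M_p}=\id$.
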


\begin{proof}
  First of all, let us check that this is indeed a complex, namely, $\pi\pl_{\bov}\iota=0$. Consider the base element $e_{i_1}\wedge\cdots\wedge e_{i_{p-1}}$ in $\mc K^{-p+1}(\bow;R)$. Suppose $i_{j-1}<t<i_j$. We have
  \begin{align*}
    &\varphantom{=}\pi\pl_{\bov}\iota(e_{i_1}\wedge\cdots\wedge e_{i_{p-1}})\\
    &=\pi\biggl(\sum_{k\neq t}v_ke_k\wedge e_{i_1}\wedge\cdots\wedge e_{i_{p-1}}+v_t e_t\wedge e_{i_1}\wedge\cdots\wedge e_{i_{p-1}}\biggr)\\
    &=\sum_{k\neq t}v_ke_k\wedge e_{i_1}\wedge\cdots\wedge e_{i_{p-1}}-\sum_{k\neq t}v_tv_kv_t^{-1}e_k \wedge e_{i_1}\wedge\cdots\wedge e_{i_{p-1}}\\
    &=0.
  \end{align*}

  Next, we consider the map $\id-\iota\pi$. By the definition of $\pi$, if none of $i_j$ is $t$, then
  \[
  (\id-\iota\pi)(e_{i_1}\wedge\cdots\wedge e_{i_{p}})=0;
  \]
  if $t=i_j$, then
  \begin{align*}
    (\id-\iota\pi)(e_{i_1}\wedge\cdots\wedge e_{i_{p}})&=e_{i_1}\wedge\cdots\wedge e_{i_{p}}+\sum_{k\neq t}v_kv_t^{-1}e_{i_1}\wedge\cdots\wedge e_{i_{j-1}}\wedge e_k\wedge e_{i_{j+1}}\wedge\cdots\wedge e_{i_p}\\
    &=\sum_{k=0}^nv_kv_t^{-1}e_{i_1}\wedge\cdots\wedge e_{i_{j-1}}\wedge e_k\wedge e_{i_{j+1}}\wedge\cdots\wedge e_{i_p}\\
    &=\sum_{k=0}^n(-1)^{j-1}v_kv_t^{-1} e_k\wedge e_{i_1}\wedge\cdots\wedge \widehat{e_{i_{j}}} \wedge\cdots\wedge e_{i_p}\\
    &=\pl_{\bov}((-1)^{j-1}v_t^{-1}e_{i_1}\wedge\cdots\wedge \widehat{e_{i_{j}}} \wedge\cdots\wedge e_{i_p}).
  \end{align*}
  It follows that there exists a map $\zeta\colon \mc K^{-p}(\bou;R)\to\mc K^{-p+1}(\bow;R)$ given by
  \[
  \zeta(e_{i_1}\wedge\cdots\wedge e_{i_{p}})=
  \begin{cases}
    0, & \text{if none of $i_j$ is $t$,}\\
  (-1)^{j-1}v_t^{-1}e_{i_1}\wedge\cdots\wedge \widehat{e_{i_{j}}} \wedge\cdots\wedge e_{i_p}, & \text{if $t=i_j$ for some $j$,}
  \end{cases}
  \]
  which satisfies $\pl_{\bov}\iota\zeta+\iota\pi=\id$. Moreover, $\pi\iota=\id$, $\zeta\pl_{\bov}\iota=\id$. These facts indicate that the complex is split exact.
\end{proof}

Let $\tau^{\geq r}$ be the stupid truncation functor. Since the top row of $\tau^{r}\mc K^{\bullet,\bullet}(\bou,\bov;R)$ is the same as $\tau^{\geq 0}(\mc K^{\bullet}(\bou;R)[-r])$, we define the morphism $\iota_{t,(r)}$ associated to $t$ as the composition of
\[
\tau^{\geq r}(\mc K^{\bullet}(\bow;R)[-2r])\xrightarrow[\quad]{\iota}\tau^{\geq r}(\mc K^{\bullet}(\bou;R)[-2r])\hookrightarrow \Tot(\tau^{r}\mc K^{\bullet,\bullet}(\bou,\bov;R)).
\]
Sometimes we suppress the subscript $t$ in $\iota_{t,(r)}$ if no confusion arises.

\begin{proposition}\label{prop:koszul-quasi}
  For any $r\geq 0$, $\iota_{(r)}\colon \tau^{\geq r}(\mc K^{\bullet}(\bow;R)[-2r])\to \Tot(\tau^{r}\mc K^{\bullet,\bullet}(\bou,\bov;R))$ is a quasi-isomorphism with a quasi-inverse $\pi_{(r)}$ induced by $\pi$.
\end{proposition}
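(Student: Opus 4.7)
The plan is to compute $H^\bullet(\Tot(\tau^r\mc K^{\bullet,\bullet}(\bou,\bov;R)))$ via the spectral sequence associated to the vertical filtration $F^q\Tot(\tau^r\mc K^{\bullet,\bullet}) = \bigoplus_{q' \geq q}\text{row }q'$, which consists of subcomplexes since $\pl_{\bou}$ preserves rows and $\pl_{\bov}$ raises the row index. The double complex is bounded (first quadrant with $\mc K^\bullet(\bou;R)$ finite), so the spectral sequence converges strongly, and the goal is to identify the resulting edge morphism with $\iota_{(r)}$.

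The first main step is to compute the $E_1$-page by analyzing the $\pl_{\bov}$-cohomology of each column $p$, which is the truncated complex $\mc K^p(\bou;R) \xrightarrow{\pl_{\bov}} \mc K^{p-1}(\bou;R) \xrightarrow{\pl_{\bov}} \cdots \xrightarrow{\pl_{\bov}} \mc K^{p-r}(\bou;R)$ occupying rows $q = 0,\ldots,r$. Applying the splitting identities from Lemma \ref{lem:koszul-2}, i.e.\ $\pi\iota = \id$, $\zeta\pl_{\bov}\iota = \id$ and $\iota\pi + \pl_{\bov}\iota\zeta = \id$, each entry decomposes as $\mc K^{-i}(\bou;R) \cong \iota(\mc K^{-i}(\bow;R)) \oplus \pl_{\bov}\iota(\mc K^{-i+1}(\bow;R))$; the differential $\pl_{\bov}$ carries the first summand isomorphically onto the second summand of the next term and annihilates the second. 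Hence the column is split exact in every intermediate position, and the only surviving cohomology sits at the top row $q = r$ and equals $\mc K^{p-r}(\bou;R)/\pl_{\bov}\iota(\mc K^{p-r+1}(\bow;R)) \cong \mc K^{p-r}(\bow;R)$, canonically via $\pi$.

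By Lemma \ref{lem:koszul-1} we have $\pi\pl_{\bou} = \pl_{\bow}\pi$, so the induced $d_1$-differential on row $q=r$ corresponds to $\pl_{\bow}$ under the $\pi$-identification. Consequently $E_2^{p,r} \cong H^{p+r}(\tau^{\geq r}\mc K^\bullet(\bow;R)[-2r])$ while $E_2^{p,q} = 0$ for $q \neq r$; concentration in a single row forces $E_\infty = E_2$, yielding $H^n(\Tot(\tau^r\mc K^{\bullet,\bullet})) \cong H^n(\tau^{\geq r}\mc K^\bullet(\bow;R)[-2r])$. Unwinding the edge morphism — lift $a \in \mc K^{p-r}(\bow;R)$ by $\iota$ into the top row and then include into $\Tot$ — shows it is precisely the map $\iota_{(r)}$, which is therefore a quasi-isomorphism. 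The chain-level quasi-inverse $\pi_{(r)}$ is read off the same edge: on a cocycle $z = \sum_q z_q$ one has $[z] = [\iota(\pi(z_r))]$, so the assignment $z \mapsto \pi(z_r)$ is a well-defined inverse on cohomology and promotes to a chain map induced by $\pi$ using the splitting identities of Lemma \ref{lem:koszul-2} to correct the contributions from rows $q < r$.

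The main obstacle will be the column analysis in the first step: carefully tracking how the decomposition from Lemma \ref{lem:koszul-2} interacts with $\pl_{\bov}$ and handling the two boundary rows $q = 0$ (no incoming differential) and $q = r$ (no outgoing differential inside $\tau^r$) so that indeed only the top row contributes nontrivial $\pl_{\bov}$-cohomology. Once this structural picture is established, the identification of $d_1$ with $\pl_{\bow}$, degeneration at $E_2$, and recognition of the edge map as $\iota_{(r)}$ are formal consequences of standard spectral-sequence machinery.
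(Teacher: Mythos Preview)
Your argument is correct in substance but takes a different route from the paper. There is one notational slip: you describe the filtration as $F^q=\bigoplus_{q'\geq q}\text{row }q'$, yet you then compute the $\pl_{\bov}$-cohomology of columns on the $E_1$-page. The row filtration would give $\pl_{\bou}$-cohomology on $E_1$; what you actually want (and what the rest of your argument uses) is the column filtration $F^p=\bigoplus_{p'\geq p}\text{column }p'$, whose $E_1$-page is indeed the vertical $\pl_{\bov}$-cohomology. With that correction the spectral-sequence argument goes through exactly as you outline: Lemma~\ref{lem:koszul-2} forces each column to be $\pl_{\bov}$-acyclic except at the top row $q=r$, where the cokernel is identified with $\mc K^{p-r}(\bow;R)$ via $\pi$; Lemma~\ref{lem:koszul-1} identifies $d_1$ with $\pl_{\bow}$; degeneration at $E_2$ in a single row yields the isomorphism, and $\iota_{(r)}$ visibly induces the identity on the $E_1$-level, hence is a quasi-isomorphism. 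The check that $\pi_{(r)}$ (project to the top row, then apply $\pi$) is a genuine chain map follows from $\pi\pl_{\bov}=0$ (a consequence of $\pi\pl_{\bov}\iota=0$ together with the splitting $\id=\iota\pi+\pl_{\bov}\iota\zeta$) and $\pi\pl_{\bou}=\pl_{\bow}\pi$.

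The paper's proof is more hands-on: it splices the split short exact sequences of Lemma~\ref{lem:koszul-2} (using $\pl_{\bov}\iota\pi=\pl_{\bov}$) into a long exact sequence of complexes terminating in $\mc K^{\bullet}(\bow;R)[-2r]$, observes that after applying $\tau^{\geq r}$ and the sign trick the resulting double complex is exactly $\tau^r\mc K^{\bullet,\bullet}(\bou,\bov;R)$, and concludes that $\pi_{(r)}$ is a quasi-isomorphism directly from acyclicity of this resolution. Your spectral-sequence approach packages the same exactness information more abstractly; it is arguably quicker once the machinery is in place, while the paper's resolution argument makes the role of $\pi_{(r)}$ as a concrete chain-level quasi-inverse more transparent and avoids any appeal to convergence or edge-map identifications.
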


\begin{proof}
  By Lemmas \ref{lem:koszul-1}, \ref{lem:koszul-2}, the sequence
  \[
  0\To \mc K^{\bullet}(\bow;R)[1-2r]\xrightarrow[\quad]{(-1)^{\bullet}\pl_{\bov}\iota}\mc K^{\bullet}(\bou;R)[-2r]\xrightarrow[\quad]{\pi}\mc K^{\bullet}(\bow;R)[-2r]\To 0
  \]
  of cochain complexes is exact. After shifting degrees, we have another exact sequence
  \[
  0\To \mc K^{\bullet}(\bow;R)[2-2r]\xrightarrow[\quad]{(-1)^{\bullet-1}\pl_{\bov}\iota}\mc K^{\bullet}(\bou;R)[1-2r]\xrightarrow[\quad]{\pi}\mc K^{\bullet}(\bow;R)[1-2r]\To 0.
  \]
  Since $\pl_{\bov}\iota\zeta+\iota\pi=\id$ (see the proof of Lemma \ref{lem:koszul-2}), we have $(-1)^{\bullet}\pl_{\bov}\iota\pi=(-1)^{\bullet}\pl_{\bov}(\id-\pl_{\bov}\iota\zeta)=(-1)^{\bullet}\pl_{\bov}$. So the above two exact sequences are combined into a new one
  \[
  0\To \mc K^{\bullet}(\bow;R)[2-2r]\xrightarrow[\quad]{(-1)^{\bullet-1}\pl_{\bov}\iota}\mc K^{\bullet}(\bou;R)[1-2r]\xrightarrow[\quad]{(-1)^{\bullet}\pl_{\bov}}\mc K^{\bullet}(\bou;R)\xrightarrow[\quad]{\pi}\mc K^{\bullet}(\bow;R)[-2r]\To 0.
  \]
  Continuing the procedure, we obtain a long exact sequence
  \[
  \cdots\To \mc K^{\bullet}(\bou;R)[2-2r]\xrightarrow[\quad]{(-1)^{\bullet-1}\pl_{\bov}}\mc K^{\bullet}(\bou;R)[1-2r]\xrightarrow[\quad]{(-1)^{\bullet}\pl_{\bov}}\mc K^{\bullet}(\bou;R)[-2r]\overset{\pi}{\twoheadrightarrow}\mc K^{\bullet}(\bow;R)[-2r].
  \]

  Let the functor $\tau^{\geq r}$ act on the long sequence, and then by using the sign trick, we make all the terms except the last one (i.e.\ $\tau^{\geq r}(\mc K^{\bullet}(\bow;R)[-2r])$) into a double complex. It is obvious that the resulting double complex is nothing but $\tau^{r}\mc K^{\bullet,\bullet}(\bou,\bov;R)$. Therefore, $\pi$ induces a quasi-isomorphism
  \[
  \pi_{(r)}\colon \Tot(\tau^{r}\mc K^{\bullet,\bullet}(\bou,\bov;R))\To \tau^{\geq r}(\mc K^{\bullet}(\bow;R)[-2r])
  \]
  which is quasi-inverse to $\iota_{(r)}$.
\end{proof}

\begin{definition}
An $n$-POS $(\bou,\bov)$ is said to be \emph{proportional} to another one $(\bou',\bov')$ if there exist invertible $\lam$, $\mu\in R$ such that $(\bou',\bov')=(\lam\bou,\mu\bov)$.
\end{definition}

Notice that the $(p,q)$-entry of $\tau^{r}\mc K^{\bullet,\bullet}(\bou,\bov;R)$ (resp.\ $\tau^{r}\mc K^{\bullet,\bullet}(\bou',\bov';R)$) is $\mc K^{p-q}(\bou,\bov;R)$ (resp.\ $\mc{K}^{p-q}(\bou',\bov';R)$), and that $\mc K^{p-q}(\bou,\bov;R)$ and $\mc K^{p-q}(\bou',\bov';R)$ share the same rank as free $R$-modules. There are isomorphisms
\[
\lam^{p}\mu^{q}\colon \mc K^{p-q}(\bou,\bov;R)\To \mc K^{p-q}(\bou',\bov';R)
\]
given by the multiplication by $\lam^{p}\mu^{q}$ for all $p$, $q$, and they constitute an isomorphism
\begin{equation}\label{eq:iso-double}
\xi_{(r)}\colon \tau^{r}\mc K^{\bullet,\bullet}(\bou,\bov;R)\To \tau^{r}\mc K^{\bullet,\bullet}(\bou',\bov';R)
\end{equation}
of double complexes. The induced isomorphism between their total complexes is denoted by $\xi_{(r)}^{\Tot}$.

\section{Hochschild cohomology of affine hypersurfaces}\label{sec:HHaffine}

Let $A=k[y_1,\ldots,y_n]/(G)$ be the quotient of the polynomial algebra $k[y_1,\ldots,y_n]$ by a unique relation $G$. There are several papers concerning the Hochschild and cyclic (co)homology of $A$, the treatment of the topic dating back to Wolffhardt's work on Hochschild homology of (analytic) complete intersections \cite{wolffhardt}. We base our exposition on the more recent papers \cite{BACH}, \cite{Michler:hypersurface}. In \cite{Michler:hypersurface}, Michler describes the Hochschild homology groups of $A$  as well as their Hodge decompositions when $G$ is reduced, based on the cotangent complex of $A$. The Hochschild cohomology groups are not treated in \cite{Michler:hypersurface}. In \cite{BACH}, the authors from BACH construct a nice finitely generated free resolution $\mc R_\bullet(A)$ of $A$ over $A^e$ under an additional condition on $G$. For the normalized bar resolution $\bar{C}^\mathrm{bar}_\bullet(A)$, the authors give comparison maps
\begin{equation}\label{alpha}
\al\colon \bar{C}^\mathrm{bar}_\bullet(A)\to\mc R_\bullet(A)
\end{equation}
and $\al'\colon \mc R_\bullet(A)\to \bar{C}^\mathrm{bar}_\bullet(A)$ satisfying $\al\al'=\id$. By virtue of the smaller resolution $\mc R_\bullet(A)$, the authors compute the Hochschild homology and cohomology of $A$.

From now on, we assume that $G=G(y_1,\ldots,y_n)$ has leading term $y_1^d$  with respect to the lexicographic ordering $y_1>\cdots> y_n$. Under this assumption, we are able to use the resolution $\mc R_\bullet(A)$ from \cite{BACH} and obtain the Hochschild cohomology groups as $H^p(A,A)=H^p(\mc L^\bullet(A))$ where $\mc L^\bullet(A)=\Hom_{A^e}(\mc R_{\bullet}(A),A)$. In this section, we first make the complex $\mc L^\bullet(A)$ explicit according to \cite{BACH}. Next we restate $\mc L^\bullet(A)$ in terms of the cotangent complex, inspired by \cite{Michler:hypersurface}. Finally, Hochschild cohomology of localizations of $A$ is considered.

By the construction of \cite{BACH}, $\mc L^\bullet_|(A)=\wedge^\bullet (A\mf e_1\oplus\cdots\oplus A\mf e_n)$ and $\mc L^\bullet(A)$ is the algebra of divided powers over $\mc L^\bullet_|(A)$ in one variable $\mf s$. Put $|\mf e_i|=1$ and $|\mf s^{(j)}|=2j$, $\mc L^\bullet(A)$ is made into a DG $A$-algebra whose differential is given by $\mf e_i\mapsto (\pl G/\pl y_i)\mf s^{(1)}$ and $\mf s^{(1)}\mapsto 0$. By writing $\mf e_{i_1\ldots i_l}$ instead of the product $\mf e_{i_1}\wedge\cdots\wedge\mf e_{i_l}$, we have
\[
\mc L^p(A)=\bigoplus_{\substack{0\leq j\leq p/2\\1\leq i_1<\cdots<i_{p-2j}\leq n}}A\mf e_{i_1\ldots i_{p-2j}}\mf s^{(j)},
\]
and the differential $\mc L^p(A)\to\mc L^{p+1}(A)$ is given by
\[
\mf e_{i_1\ldots i_{p-2j}}\mf s^{(j)}\longmapsto\sum_{l=1}^{p-2j}(-1)^{l-1}\frac{\pl G}{\pl y_{i_l}}\mf e_{i_1\ldots \widehat{i_l}\ldots i_{p-2j}}\mf s^{(j+1)}.
\]
It immediately follows that the $A$-module complex $\mc L^\bullet(A)$ admits a decomposition $\mc L^\bullet(A) = \oplus_{r\in\nan}\mc L^\bullet(A)_r$ with
\begin{equation}\label{eq:L(A)r}
\mc L^\bullet(A)_r=\tau^{\geq r}(\mc K^{\bullet}((\pl G/\pl y_i)_{1\leq i\leq n};A)[-2r]).
\end{equation}

As stated in \cite{Michler:hypersurface} (also see \cite[Ch.\ III, Prop.\ 3.3.6]{Illusie:cot-comp-1}), the cotangent complex $\mbb{L}_{A/k}$ of $A$, which is unique up to homotopy equivalence, is given by
\[
0\To Adz\xrightarrow[\quad]{\de} \bigoplus_{i=1}^nAd y_i\To 0
\]
where the two nonzero terms sit in degrees $-1$ and $0$ respectively, $dz$ and $dy_i$ are base elements and
\[
\de(dz)=\sum_{i=1}^{n}\frac{\pl G}{\pl y_i} dy_i.
\]
Note that by \cite{gerstenhaberschackhodge}, $H^p(A, A)$ has the Hodge decomposition $\oplus_{r\in\nan}H^p_{(r)}(A,A)$, and the component
\[
H^p_{(r)}(A,A)\cong \Ext^{p-r}_A(\wedge^r\mbb L_{A/k}, A).
\]
%where $\wedge^r$ should be understood as derived exterior product\footnote{Possible reference: D. Quillen, On the (co-)homology of commutative rings, Applications of Categorical Algebra (Proc. Sympos. Pure Math., Vol. XVII, New York, 1968), Amer. Math. Soc., Providence, R.I., 1970, pp. 65--87.}.
By \cite[Ch.\ VIII, Cor.\ 2.1.2.2]{Illusie:cot-comp-2}, $\wedge^r\mbb L_{A/k}$ is isomorphic to a complex determined by $\de$ in the derived category $\DD^b(A)$, more explicitly,
\[
\wedge^r\mbb L_{A/k}\cong \bigoplus_{i+j=r}\wedge^i(Ady_1\oplus\cdots\oplus Ady_n)\ot_A \varGamma^j(Adz)
\]
where $\varGamma^j(-)$ is the degree $j$ component of the divided power functor over $A$.\footnote{Upright $\Gamma(X,-)$ will denote the global section functor on a scheme $X$ in \S\ref{sec:HHproj}.} It follows that $\Ext^{p-r}_A(\wedge^r\mbb L_{A/k}, A)$ is the $(p-r)$-th cohomology group of
\begin{equation}\label{eq:dual-cotangent-compl}
\Hom_A(\wedge^r\mbb L_{A/k}, A)\cong \bigoplus_{i+j=r}\wedge^i(A(dy_1)^*\oplus\cdots\oplus A(dy_n)^*)\ot_A \varGamma^j(A(dz)^*)
\end{equation}
Notice that the $j$-th term of the right-hand side of \eqref{eq:dual-cotangent-compl} is free of rank $\binom{n}{r-j}$ which is the same as $\tau^{\geq 0}(\mc K^{\bullet}((\pl G/\pl y_i)_{1\leq i\leq n};A)[-r])$ for all $0\leq j\leq r$. By taking into account the differentials, we can construct an isomorphism $\Hom_A(\wedge^r\mbb L_{A/k}, A)\cong \tau^{\geq 0}(\mc K^{\bullet}((\pl G/\pl y_i)_{1\leq i\leq n};A)[-r])$. Equivalently, $\Hom_A(\wedge^r\mbb L_{A/k}, A)[-r]\cong \mc L^\bullet(A)_r$ by \eqref{eq:L(A)r} and consequently the isomorphism
\[
\mc L^\bullet(A)\cong \bigoplus_{r\in\nan}\Hom_A(\wedge^r\mbb L_{A/k}, A)[-r]
\]
holds true in $\DD^b(A)$. Therefore, $H^p_{(r)}(A,A)\cong  H^{p}(\Hom_A(\wedge^r\mbb L_{A/k}, A)[-r])\cong H^p(\mc L^\bullet(A)_r)$, and the decomposition of $H^p(\mc L^\bullet(A))$ deduced from \cite{BACH} actually corresponds to the Hodge decomposition of $H^p(A,A)$.

Observe that the comparison map $\al$ from \eqref{alpha} gives rise to a quasi-isomorphism $\be\colon \mc L^\bullet(A)\to \bar{C}^\bullet(A,A)$ landing in the normalized Hochschild complex of $A$. The morphism $\be$, whose explicit expression will be given later on, induces the isomorphism $H^p_{(r)}(A,A)\cong  H^{p}(\mc L^\bullet(A)_r)$. For our purpose, we first introduce some cochains. Note that the algebra $A$ has a basis
\[
\mc B_A=\{y_1^{p_1}y_2^{p_2}\cdots y_n^{p_n}\mid 0\leq p_1\leq d-1,\, p_2,\ldots,p_n\in\nan\}.
\]
We define for $1\leq l\leq n$ a normalized 1-cochain $\ppl/\pl y_l$ by
\begin{equation}\label{eqpart}
\mc B_A\ni y_1^{p_1}y_2^{p_2}\cdots y_n^{p_n}=f\longmapsto\frac{\ppl f}{\pl y_l}=p_ly_1^{p_1}\cdots y_{l-1}^{p_{l-1}}y_l^{p_l-1}y_{l+1}^{p_{l+1}}\cdots y_n^{p_n}
\end{equation}
and a normalized 2-cochain $\pmul$ by
\begin{equation}\label{eqmu}
\pmul(f,g)=
\begin{cases}
0, & p_1+q_1<d,\\
y_1^{p_1+q_1-d}y_2^{p_2+q_2}\cdots y_n^{p_n+q_n}, & p_1+q_1\geq d.
\end{cases}
\end{equation}
for an additional $g=y_1^{q_1}y_2^{q_2}\cdots y_n^{q_n}\in\mc B_A$.
One can easily check that $\pmul$ is a $2$-cocycle.

Now we give the expression of $\be =\sum_r\be_{(r)} \colon \mc L^\bullet(A)\to \bar{C}^\bullet(A,A)$:
\begin{equation}\label{eq:qiso-affine}
\be_{(p-j)}(\mf e_{i_1\ldots i_{p-2j}}\mf s^{(j)})=(-1)^{\binom{p-2j}{2}}\frac{\ppl}{\pl y_{i_1}}\cup\cdots\cup \frac{\ppl}{\pl y_{i_{p-2j}}}\cup\pmul^{\cup j}.
\end{equation}
The notation $\cup$, not to be confused with the well-known cup product, is defined as
   \[
   P_1\cup P_2\cup\cdots \cup P_m=\frac{1}{m!}\sum_{\si\in S_m}(-1)^{c}\mu\circ\bigl(P_{\si^{-1}(1)}\ot P_{\si^{-1}(2)}\ot\cdots\ot P_{\si^{-1}(m)}\bigr)
   \]
   where $P_i\in\bar{C}^\bullet(A,A)$, $\mu$ is the multiplication map (or rather its unique extension by associativity to an $m$-ary multiplication map) and
   \[
   c=\#\{(i,j)\mid i<j,\, \si^{-1}(i)>\si^{-1}(j),\, P_i, P_j \text{ have odd degrees}\}.
   \]
Thus, the operation $\cup$ becomes supercommutative. For example,
  \begin{gather*}
  \frac{\ppl}{\pl y_i}\cup\pmul=\frac{1}{2}\mu\circ\biggl(\frac{\ppl}{\pl y_i}\ot\pmul+\pmul\ot\frac{\ppl}{\pl y_i}\biggr)=\pmul\cup\frac{\ppl}{\pl y_i},\\
  \frac{\ppl}{\pl y_i}\cup\frac{\ppl}{\pl y_j}=-\frac{\ppl}{\pl y_j}\cup\frac{\ppl}{\pl y_i}.
  \end{gather*}
%because $\ppl/\pl y_i$, $\ppl/\pl y_j$ have degree 1 and $\pmul$ has degree 2 in $\bar{C}^\bullet(A,A)$.

\begin{remark}
Since $\be_{(r)}(\mc L^\bullet(A)_r)\subseteq \bar{C}^\bullet(A,A)_r$, we also call $\mc L^\bullet(A)=\oplus_{r\in\nan}\mc L^\bullet(A)_r$ the \emph{Hodge decomposition}.
\end{remark}

\begin{remark}\label{remsmooth}
Recall that the vanishing of the groups $H^2_{(1)}(A,M)$ for all $A$-modules $M$ characterizes smoothness of $A$. By \cite[Thm.\ 5.3]{Knudson:smooth-affine}, the condition is in turn equivalent to the vanishing of the single group $H^2_{(1)}(A,A)$, i.e.\ $H^2(\mc L^\bullet(A)_1)=0$. It follows that $A$ is smooth if and only if the ideal $(\pl G/\pl y_1,\ldots,\pl G/\pl y_n)$ is equal to $A$ itself.
\end{remark}

Let $\bar{A}$ be the localization of $A$ at a multiplicatively closed set generated by $y_{t_1},\ldots,y_{t_h}$ where $2\leq t_1<\cdots< t_h\leq n$. Let $\si\colon \bar{A}\to B$ be a morphism of commutative algebras such that $B$ is a flat $\bar{A}$-module via $\si$. Then $\bar{A}$ has a basis
\[
\mc B_{\bar{A}}=\{y_1^{p_1}y_2^{p_2}\cdots y_n^{p_n}\mid 0\leq p_1\leq d-1, p_{t_1},\ldots,p_{t_h}\in\inn, \text{ other }p_i\in\nan\}.
\]
As above, cochains $\ppl/\pl y_l\in\bar{C}^1(\bar{A},\bar{A})$ and $\pmul\in\bar{C}^2(\bar{A},\bar{A})$ can be defined similarly. After composing them with $\si$, we obtain cochains in $\bar{C}^1(\bar{A},B)$, $\bar{C}^2(\bar{A},B)$. Furthermore, one can easily check that there is a quasi-isomorphism $\be\colon B\ot_A \mc L^\bullet(A)\to \bar{C}^\bullet(\bar{A},B)$ whose expression is similar to the one shown in \eqref{eq:qiso-affine}.

\section{Hochschild cohomology of projective hypersurfaces}\label{sec:HHproj}

For any morphism $X\to Y$ of schemes or analytic spaces, Buchweitz and Flenner introduce the Hochschild complex $\mbb H_{X/Y}$ of $X$ over $Y$ \cite{Buchweitz-Flenner:global-Hochschild}, and they deduce an isomorphism $\mbb H_{X/Y}\cong \mbb S(\mbb L_{X/Y}[1])$ in the derived category $\DD(X)$ where $\mbb L_{X/Y}$ denotes the cotangent complex of $X$ over $Y$ and $\mbb S(\mbb L_{X/Y}[1])$ is the derived symmetric algebra \cite{Buchweitz-Flenner:decomp-Atiyah}. As a consequence, there is a decomposition of Hochschild cohomology in terms of the derived exterior powers of the cotangent complex
\begin{equation}\label{eq:generalized-HKR}
HH^i(X/Y)\cong\bigoplus_{p+q=i}\Ext^p_X(\wedge^q_{\vphantom{X}}\mbb L_{X/Y}, \mc O_X)
\end{equation}
which generalizes the HKR decomposition in the smooth case. Around the same time, Schuhmacher also deduced the decomposition \eqref{eq:generalized-HKR} using a different method \cite{Schuhmacher:decomp-Noetherian-scheme}.

%In practice, it is difficult to compute cohomology of a scheme $X$ via the right-hand side of \eqref{eq:generalized-HKR} directly. A key reason is that $\wedge^q_{\vphantom{X}}\mbb L_{X/Y}$ is well-defined in the derived category; even though two objects are known to be ``the same'' in a derived category, one hardly finds an isomorphism between them.

In general, it may be hard to compute the right hand side of \eqref{eq:generalized-HKR}, but in some special situations, $\mbb{L}_{X/Y}$ has a very nice expression. For example, in \cite[Expose VIII]{SGA6} Berthelot defines $\mbb L_{X/Y}$ as a complex concentrated in two degrees when $X\to Y$ factors as a closed immersion $X\to X'$ followed by a smooth morphism $X'\to Y$. In particular, Berthelot's definition can be applied to the case when $Y=\mathrm{Spec}\,k$ and $X$ is a projective hypersurface over $k$. Although $\mbb{L}_{X/k}$ admits a very simple expression in this case, we do not use it for our computation.  As a sequel to \cite{DLL:defo-qch}, \cite{Lowen-VandenBergh:hoch}, we compute $HH^i(X)$ starting from the Gerstenhaber-Schack complex, since a deformation interpretation of Gerstenhaber-Schack $2$-cocycles is at hand \cite{DLL:defo-qch}. In \S \ref{subsec:quasi-iso-GS} we construct a series of complexes of $\mc O_X$-modules, as well as morphisms from their associated \v{C}ech complexes to the respective components of the normalized reduced Gerstenhaber-Schack complex $\bar{\mbf{C}}'_{\GS}(\mc O_X|_{\mf V})$ (for a chosen covering $\mf V$). Using the technique from \S\ref{sec:mixedcomp}, we prove that these maps are quasi-isomorphisms.

Due to the theoretical significance of the cotangent complex, we give an expression of $\mbb L_{X/k}$ in terms of twisted structure sheaves $\mc O_X(l)$ in \S\ref{subsec:cotangent-complex} when $X$ is a projective hypersurface. This allows us to explain directly how our results agree with Buchweitz and Flenner's.

In \S \ref{subsec:compute-cohomology}, we prove our main theorem Theorem \ref{thmmain}, providing a computation of the Hochschild cohomology groups of a projective hypersurface of degree $d$ in ${\mbb P}^n$ in terms of easier complexes. The result makes a basic distinction between the case $d > n+1$, the harder case $d = n+1$ and the easier case $d \leq n$.
 
Based upon our computations in \S \ref{subsec:compute-cohomology}, we prove in \S \ref{subsec:char-smooth} that a projective hypersurface is smooth if and only if the HKR decomposition of the second Hochschild cohomology group \eqref{eq:HKR-decomp-introduction} holds (Theorem \ref{thm:smooth-equiv}). This can be seen as an analogue of the characterization of smoothness of affine hypersurfaces (Remark \ref{remsmooth}).

Recall that by definition of the GS complex, we have $$\CC^2_{\mathrm{GS}}(\aaa) = \CC^{0,2}(\aaa) \oplus \CC^{1,1}(\aaa) \oplus \CC^{2,0}(\aaa).$$
We call a $2$-cocycle $(m,f,c) \in \CC^2_{\mathrm{GS}}(\aaa)$ \emph{untwined} ({decomposable} in \cite{DLL:defo-qch}) if $(m, 0, 0)$, $(0, f, 0)$ and $(0,0, c)$ are all $2$-cocycles.
A GS $2$-class is called \emph{intertwined} if it has no untwined representative $(m, f, c)$.
In \S \ref{subsec:intertwined}, based upon the results from \S \ref{subsec:compute-cohomology} we show that for a projective hypersurface as above if either $n \neq 2$ or $n = 2$ and $d \leq 4$, no intertwined $2$-class exists. We give a family of concrete examples of intertwined $2$-class for $n = 2$ and $d \geq 5$.

Finally, in \S \ref{subsec:quartic-surface} we pay special attention to the case of quartic surfaces. We show that the dimension of $H^2_{\GS}(\mc A)_1$ lies between $20$ and $32$, reaching all possible values except 30 and 31. The minimal value $H^2_{\GS}(\mc A)_1 = 20$ is reached in the smooth K3 case. We also present an analysis of how $H^2_{\GS}(\mc A)_1$ is built up from $2$-classes of type $[(m', 0, 0)]$ and $2$-classes of type $[(0, f', 0)]$, giving explicit computations in concrete examples.

\subsection{Construction of quasi-isomorphisms}\label{subsec:quasi-iso-GS}
Let $R=k[x_0,\ldots,x_n]$ and $F=F(x_0,\ldots,x_n)$ be a homogeneous polynomial of degree $d\geq 2$. Let $S=R/(F)$ and $X=\Proj S\subseteq \mbb{P}^n$. Suppose that $F$ has a summand $x_0^d$ when $F$ is uniquely expressed as a sum of nonzero monomials. In this way, $X$ can be covered by
\[
\mf U=\{U_i=X\cap\{x_i\neq 0\}\mid 1\leq i\leq n\}.
\]
Let $\mf V=\{V_{i_1\ldots i_s}=U_{i_1}\cap\cdots\cap U_{i_s}\mid 1\leq i_1<\cdots< i_s\leq n\}$ be the associated covering closed under intersections. For any a $p$-simplex $\si\in\mc N_p(\mf V)$, denote its domain and codomain by ${}_\diamond\si$ and $\si_\diamond$ respectively. Let $\mbf C^{\bullet,\bullet}(\mc A)$ be the Gerstenhaber-Schack double complex where $\mc A=\mc O_X|_{\mf V}$, namely,
\[
\mbf C^{p,q}(\mc A)=\prod_{\si\in\mc N_p(\mf V)}\Hom_k(\mc A(\si_\diamond)^{\ot q}, \mc A({}_\diamond\si))
\]
endowed with the (vertical) product Hochschild differential $d_{\Hoch}$ and the (horizontal) simplicial differential $d_{\simp}$. Recall that a cochain $f = (f_{\si})\in \mbf C^{p,q}(\mc A)$ is called normalized if for any $p$-simplex $\si$, $f_{\si}$ is normalized, and it is called reduced if $f_{\si}=0$ whenever $\si$ is degenerate. Let $\bar{\mbf C}^{\prime \bullet,\bullet}(\mc A)$ be the normalized reduced sub-double complex of $\mbf C^{\bullet,\bullet}(\mc A)$ and $\bar{\mbf C}_{\GS}^{\prime \bullet}(\mc A)$ be the associated total complex.

Observe that for $1\leq i\leq n$,  $A_i=\mc A(U_i)=k[y_0,\ldots,\widehat{y_i},\ldots, y_n]/(G_i)$ where
\[
G_i=F(y_0,\ldots,y_{i-1}, 1, y_{i+1},\ldots,y_n)=y_0^d+\cdots
\]
is monic. Here we assign an ordering $y_0>\cdots>y_{i-1}> y_{i+1}>\cdots> y_n$. So we have complexes $\mc L^\bullet(A_i)$ as given in \S \ref{sec:HHaffine}. Denote by $\bow_i$ the sequence
\[
\biggl(\frac{\pl G_i}{\pl y_0},\ldots,\frac{\pl G_i}{\pl y_{i-1}}, \frac{\pl G_i}{\pl y_{i+1}},\ldots, \frac{\pl G_i}{\pl y_n}\biggr).
\]
Then $\mc L^\bullet(A_i)_r=\tau^{\geq r}(\mc K^\bullet(\bow_i;A_i)[-2r])$.

For any $V\in \mf V$ let $\Phi(V)=\{t\in\{1,\ldots,n\} \mid V\subseteq U_t\}$. If $t\in\Phi(V)=\{t_1,\ldots,t_m\}$, we express $\mc A(V)$ in term of generators and relations as
\[
\mc A(V,t)=k[y_0,\ldots,\widehat{y_t},\ldots,y_n,y_{t_1}^{-1},\ldots,\widehat{y_{t}^{-1}},\ldots,y_{t_m}^{-1}]/(G_t, y_{t_1}y_{t_1}^{-1}-1,\ldots, y_{t_m}y_{t_m}^{-1}-1).
\]
Since $\mc A(V,t)$ is a localization of $A_t$, there is a quasi-isomorphism
\[
\be\colon B\ot_{A_t}\mc L^\bullet(A_t)\To \bar{C}^\bullet(\mc A(V,t),B)
\]
for any flat morphism $\mc A(V,t)\to B$ by the last paragraph of \S\ref{sec:HHaffine}. If $s$ also belongs to $\Phi(V)$, the canonical isomorphism $\mc A(V,t)\to \mc A(V,s)$ is denoted by $\zeta_{t,s}$. Unfortunately, $\zeta_{t,s}$ is not compatible with the differentials of $\mc L^\bullet(A_t)$ and $\mc L^\bullet(A_s)$, namely, the square
\[
\xymatrix{
B\ot_{A_t}\mc L^\bullet(A_t)\ar[r]^-{\be}\ar[d]_-{\zeta_{t,s}} & \bar{C}^\bullet(\mc A(V,t),B)  \\
B\ot_{A_s}\mc L^\bullet(A_s)\ar[r]^-{\be} & \bar{C}^\bullet(\mc A(V,s),B) \ar[u]_-{\zeta_{t,s}^*}
}
\]
fails to be commutative. So one does not expect that the complexes $\mc L^\bullet(\mc A(V))$ for all affine pieces $V$ can be made into a complex $\mc L^\bullet$ of sheaves on $X$ equipped with nice restriction maps. The reason is that the $\mc L^\bullet(\mc A(V))$'s are too small. In order to study $\mc A$ globally, we have to put on their weight. Their ``food'' should be convenient for computation in principle.

It follows from Euler's formula
\[
\sum_{i=0}^{n}\frac{\pl F}{\pl x_i}\cdot x_i=d\cdot F
\]
that
\[
\bou=\biggl(\frac{\pl F}{\pl x_0},\frac{\pl F}{\pl x_1},\ldots, \frac{\pl F}{\pl x_n}\biggr)
\text{ and }
\bov=(x_0,x_1,\ldots,x_n)
\]
make up an $n$-POS in $S$. Also, there is an $n$-POS $(\bou_i,\bov_i)$ in $A_i$:
\[
\bou_i=\biggl(\frac{\pl G_i}{\pl y_0},\ldots,\frac{\pl G_i}{\pl y_{i-1}}, H_i, \frac{\pl G_i}{\pl y_{i+1}},\ldots, \frac{\pl G_i}{\pl y_n}\biggr)
\text{ and }
\bov_i=(y_0,\ldots,y_{i-1},1,y_{i+1},\ldots, y_n)
\]
where
\[
H_i=\frac{\pl F}{\pl x_i}(y_0, y_1,\ldots, y_{i-1},1,y_{i+1},\ldots, y_n).
\]
Since $\bow_i$ is the subsequence of $\bou_i$ by deleting $H_i$, the results from \S\ref{sec:mixedcomp} apply. As before we get the mixed complex $\mc K^{\bullet}(\bou,\bov; S)$ and the double complex $\mc K^{\bullet,\bullet}(\bou,\bov; S)$.

Let $r\geq 0$ and let us consider $\tau^r\mc K^{\bullet,\bullet}(\bou,\bov; S)$. We twist the degrees of its entries as in Figure \ref{fig:truncation-S}
\begin{figure}[!htp]
\[
\xymatrix@C=8mm@R=8mm{
S(r)^{\binom{n+1}{r}} \ar[r]^-{\pl_{\bou}}  & S(r+d-1)^{\binom{n+1}{r-1}} \ar[r]^-{\pl_{\bou}}  & \cdots \ar[r]^-{\pl_{\bou}} & S(rd-d+1)^{\binom{n+1}{1}} \ar[r]^-{\pl_{\bou}} & S(rd) \\
S(r-1)^{\binom{n+1}{r-1}} \ar[r]^-{\pl_{\bou}}\ar[u]_-{\pl_{\bov}} & S(r+d-2)^{\binom{n+1}{r-2}} \ar[r]^-{\pl_{\bou}}\ar[u]_-{\pl_{\bov}} & \cdots \ar[r]^-{\pl_{\bou}}  & S(rd-d) \ar[u]_-{\pl_{\bov}} \\
\vdots \ar[u]_-{\pl_{\bov}} & \vdots \ar[u]_-{\pl_{\bov}} & \iddots  \\
S(1)^{\binom{n+1}{1}} \ar[r]^-{\pl_{\bou}}\ar[u]_-{\pl_{\bov}} & S(d) \ar[u]_-{\pl_{\bov}} \\
S \ar[u]_-{\pl_{\bov}}
}
\]
\caption{Double complex $\tau^r\mc K^{\bullet,\bullet}(\bou,\bov; S)$}
\label{fig:truncation-S}
\end{figure}
so that it is made into a double complex of graded $S$-modules. The associated total complex gives rise to a complex of sheaves
\[
\mc F_r^{\bullet}:\quad \mc O_X\To \mc O_X(1)^{n+1}\To\cdots \To \mc O_X(rd-d+1)^{n+1}\To \mc O_X(rd).
\]
We in turn have double complexes $\mc E_r^{\bullet,\bullet}$, $\mc G_r^{\bullet,\bullet}$ and $\mc H_r^{\bullet,\bullet}$ as follows:
\[
\mc{E}^{p,q}_r=\mbf{C}_{\simp}^{\prime p}(\mf V,\mc F_r^q|_{\mf V}),\quad
\mc{G}^{p,q}_r=\check{\mbf{C}}^{\prime p}(\mf V,\mc F_r^q),\quad
\mc{H}^{p,q}_r=\check{\mbf{C}}^{\prime p}(\mf U,\mc F_r^q).
\]
Their associated complexes are denoted by $\mc E_r^{\bullet}$, $\mc G_r^{\bullet}$ and $\mc H_r^{\bullet}$ respectively. Since $\mf V$ is a refinement of $\mf U$, we fix a map $\lam\colon \mf V\to \mf U$ such that $V\subseteq \lam(V)$ for all $V\in\mf V$. There is a quasi-isomorphism $\bar{\lam}\colon\mc H_r^{\bullet}\to\mc G_r^{\bullet}$ induced by $\lam$, defined by for all $f\in \mc H_r^{p,q}$,
\[
\bar{\lam}(f)_{V_{i_0}\ldots V_{i_p}}=f_{\lam(V_{i_0})\ldots\lam(V_{i_p})}.
\]
Composing $\bar{\lam}$ with the explicit quasi-isomorphism $\mc G_r^{\bullet}\to\mc E_r^{\bullet}$ given in \cite{DLL:defo-qch}, we obtain a quasi-isomorphism $\bar{\bar{\lam}}\colon \mc H_r^{\bullet}\to\mc E_r^{\bullet}$ given by
\begin{equation}\label{eq:bar-bar-lambda}
\bar{\bar{\lam}}(f)_{V_{j_0}\subseteq\cdots\subseteq V_{j_p}}=f_{\lam(V_{j_0})\ldots\lam(V_{j_p})}.
\end{equation}
Let $\bar{\mbf C}^{\prime \bullet,\bullet}(\mc A)=\oplus_{r\in\nan}\bar{\mbf C}_r^{\prime \bullet,\bullet}(\mc A)$ be the Hodge decomposition. Our goal is to construct a family of morphisms $\mc E_r^{\bullet,\bullet}\to \bar{\mbf C}_r^{\prime \bullet,\bullet}(\mc A)$ of double complexes for all $r$ that give rise to quasi-isomorphisms $\mc E_r^{\bullet}\to\bar{\mbf C}_{\GS}^{\prime\bullet}(\mc A)_r$. Since the cohomology of $\bar{\mbf C}_{\GS}^{\prime\bullet}(\mc A)$ turns out to be isomorphic to the Hochschild cohomology of $X$ (see \cite[Thm.\ 7.8.1]{Lowen-VandenBergh:hoch}), the cohomology $HH^\bullet(X)$ can be computed by $\mc H^\bullet:=\oplus_{r\in\nan}\mc H_r^\bullet$, namely, $HH^i(X)\cong H^i(\mc H^\bullet)$.

Let $\si\in\mc N_p(\mf V)$ be a $p$-simplex and consider $t$, $s\in\Phi(\si_\diamond)$. We have quasi-isomorphisms
\[
\be_t\colon  \bigoplus_{r\in\nan}\tau^{\geq r}(\mc K^\bullet(\bow_t;\mc A({}_\diamond\si,t))[-2r])\cong \mc A({}_\diamond\si,t)\ot_{A_t}\mc L^\bullet(A_t)\To \bar{C}^\bullet(\mc A(\si_\diamond,t),\mc A({}_\diamond\si,t))
\]
and $\be_s$, which is defined similarly. Let $\ppl_t/\pl y_i$, $\pmul_t$ and $\ppl_s/\pl y_i$, $\pmul_s$ be the resulting Hochschild cochains as defined in \eqref{eqpart} and \eqref{eqmu}. According to the generators and relations of $\mc A(\si_\diamond,t)$ and $\mc A({}_\diamond\si,t)$, we can regard $\ppl_t/\pl y_i$, $\pmul_t$ to be cochains in $\bar{C}^\bullet(\mc A(\si_\diamond,t),\mc A({}_\diamond\si,t))$ by abuse of notation, and similarly for $\ppl_s/\pl y_i$, $\pmul_s$.

\begin{lemma}\label{lem:pseudopartial-compatible}
  Let $\zeta'_{t,s}\colon \bar{C}^\bullet(\mc A(\si_\diamond,t),\mc A({}_\diamond\si,t))\to \bar{C}^\bullet(\mc A(\si_\diamond,s),\mc A({}_\diamond\si,s))$ be the isomorphism induced by $\zeta_{t,s}$. Then
  \begin{enumerate}
    \item $\zeta'_{t,s}(\ppl_t/\pl y_i)=y_t\cdot\ppl_s/\pl y_i$ if $i\neq t$, $s$.
    \item $\zeta'_{t,s}(\ppl_t/\pl y_s)=-\sum_{i\neq s}y_ty_i\cdot\ppl_s/\pl y_i$.
    \item $\zeta'_{t,s}(\pmul_t)=y_t^d\cdot\pmul_s$.
  \end{enumerate}
\end{lemma}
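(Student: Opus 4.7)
My plan is to verify each identity by direct computation on the standard monomial basis of $\mc A(\si_\diamond, s)$. Since $\ppl_s/\pl y_i$ and $\pmul_s$ are defined on basis monomials, and all the maps in sight are $k$-(bi)linear, it suffices to check the three claims on basis elements $a = \prod_{j\neq s}y_j^{p_j}$ (and a second factor $b = \prod_{j\neq s}y_j^{q_j}$ for (3)) with $0 \leq p_0 \leq d-1$ (resp.\ $0 \leq q_0 \leq d-1$). First I would record the explicit form of $\zeta_{t,s}^{-1}\colon\mc A(\si_\diamond, s)\to\mc A(\si_\diamond,t)$, which sends $y_i$ to $y_i/y_s$ for $i\neq t,s$ and $y_t$ to $1/y_s$, this being immediate from the projective coordinate interpretation $y_i^{(s)}=x_i/x_s$, $y_i^{(t)}=x_i/x_t$.

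The key preparatory observation is that
\[
\zeta_{t,s}^{-1}(a) = \prod_{j\neq s,t}y_j^{p_j}\cdot y_s^{-P}, \qquad P := \sum_{j\neq s}p_j,
\]
is itself a standard basis monomial of the localization $\mc A(\si_\diamond,t)$: the exponent of $y_0$ remains $p_0\leq d-1$, which matters because $0\notin\{s,t\}\subseteq\{1,\ldots,n\}$ forces $G_t$ to retain $y_0^d$ as its lex leading term, while the exponent on the localized variable $y_s$ may be any integer. Hence the formal monomial rules defining $\ppl_t/\pl y_i$ and $\pmul_t$ apply verbatim to $\zeta_{t,s}^{-1}(a)$. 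Each identity then reduces to exponent bookkeeping under push-forward by $\zeta_{t,s}$. For (1) with $i\neq t,s$, $\ppl_t/\pl y_i$ produces the factor $p_i$ and lowers the exponent of $y_i$ by one; after pushing back through $\zeta_{t,s}$ and collecting all contributions to the power of $y_t$, the exponent of $y_t$ comes out to be $p_t+1$ rather than $p_t$, giving precisely $y_t\cdot\ppl_s/\pl y_i(a)$. For (2), $\ppl_t/\pl y_s$ brings out a factor $-P$ and an identical collection of $y_t$-powers yields $-P\cdot y_t\cdot a$, which equals $-\sum_{i\neq s}y_t y_i\cdot\ppl_s/\pl y_i(a)$ by the elementary identity $y_i\cdot\ppl_s/\pl y_i(a)=p_i\cdot a$. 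For (3), the vanishing condition $p_0+q_0<d$ is obviously preserved by $\zeta_{t,s}^{-1}$, and on the non-vanishing locus the $y_t$-exponent accumulated when pushing $\pmul_t(\zeta_{t,s}^{-1}(a),\zeta_{t,s}^{-1}(b))$ forward totals $d+p_t+q_t$, above the $y_t$-exponent $p_t+q_t$ carried by $\pmul_s(a,b)$; this yields the prefactor $y_t^d$ exactly.

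The only subtle point is the basis-preservation claim in the second paragraph, namely that $\zeta_{t,s}^{-1}$ sends standard basis monomials to standard basis monomials; without this the formal definitions of $\ppl_t/\pl y_i$ and $\pmul_t$ would not be directly applicable, and one would have to first reduce modulo $G_t$. Once this is granted the proof is purely mechanical. The unifying numerical identity is that after the round trip through $\zeta_{t,s}$, the transition variable $y_t$ acquires an extra exponent of $1$ in items (1) and (2), and of $d$ in item (3), above the $y_t$-exponent carried by the target monomial on the right-hand side, which is exactly what the prefactors $y_t$, $y_t y_i$, and $y_t^d$ encode.
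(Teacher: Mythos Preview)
Your proof is correct and follows essentially the same approach as the paper: evaluate $\zeta'_{t,s}$ on basis monomials by computing $\zeta_{t,s}\circ(\cdot)\circ\zeta_{s,t}$ explicitly, using that $\zeta_{s,t}$ sends a standard basis monomial of $\mc A(\si_\diamond,s)$ to a standard basis monomial of $\mc A(\si_\diamond,t)$ so the formal rules for $\ppl_t/\pl y_i$ and $\pmul_t$ apply directly, and then track the resulting $y_t$-exponent. Your emphasis on the basis-preservation point as the only subtle ingredient is apt; the paper simply performs the same bookkeeping line by line without isolating that observation.
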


\begin{proof}
  (1) (2) Choose any $f=y_0^{p_0}\cdots y_{s-1}^{p_{s-1}}y_{s+1}^{p_{s+1}}\cdots y_n^{p_n}\in\mc B_{\mc A(\si_\diamond,s)}$ and let $|f|=\sum_{i\neq t,s}p_i$. We have
  \begin{align*}
    \zeta'_{t,s}\biggl(\frac{\ppl_t}{\pl y_i}\biggr)(f)&=\zeta_{t,s}\circ\frac{\ppl_t}{\pl y_i}\circ\zeta_{s,t}(f)\\
    &=\zeta_{t,s}\circ\frac{\ppl_t}{\pl y_i}(y_0^{p_0}\cdots y_{t-1}^{p_{t-1}}y_{t+1}^{p_{t+1}}\cdots y_s^{-|f|}\cdots y_n^{p_n})\\
    &=\zeta_{t,s}(p_iy_0^{p_0}\cdots y_i^{p_i-1}\cdots y_{t-1}^{p_{t-1}}y_{t+1}^{p_{t+1}}\cdots y_s^{-|f|}\cdots y_n^{p_n})\\
    &=p_iy_0^{p_0}\cdots y_i^{p_i-1}\cdots y_t^{p_t+1}\cdots y_{s-1}^{p_{s-1}}y_{s+1}^{p_{s+1}}\cdots y_n^{p_n}\\
    &=y_t\frac{\ppl_s}{\pl y_i}(f)
  \end{align*}
  for all $i\neq t$, $s$, and
  \begin{align*}
    \zeta'_{t,s}\biggl(\frac{\ppl_t}{\pl y_s}\biggr)(f)&=\zeta_{t,s}\circ\frac{\ppl_t}{\pl y_s}(y_0^{p_0}\cdots y_{t-1}^{p_{t-1}}y_{t+1}^{p_{t+1}}\cdots y_s^{-|f|}\cdots y_n^{p_n})\\
    &=\zeta_{t,s}(-|f|y_0^{p_0}\cdots y_{t-1}^{p_{t-1}}y_{t+1}^{p_{t+1}}\cdots y_s^{-|f|-1}\cdots y_n^{p_n})\\
    &=-|f|y_0^{p_0}\cdots  y_t^{p_t+1}\cdots y_{s-1}^{p_{s-1}}y_{s+1}^{p_{s+1}}\cdots y_n^{p_n}\\
    &=-y_t|f|f\\
    &=-\sum_{i\neq s}y_ty_i\frac{\ppl_s}{\pl y_i}(f).
  \end{align*}

  (3) Let $g=y_0^{q_0}\cdots y_{s-1}^{q_{s-1}}y_{s+1}^{q_{s+1}}\cdots y_n^{q_n}\in\mc B_{\mc A(\si_\diamond,s)}$ and $|g|=\sum_{i\neq t,s}q_i$. Assume $p_0+q_0\geq d$. Then
  \begin{align*}
    \zeta'_{t,s}(\pmul_t)(f,g)&=\zeta_{t,s}\circ\pmul_t(y_0^{p_0}\cdots y_{t-1}^{p_{t-1}}y_{t+1}^{p_{t+1}}\cdots y_s^{-|f|}\cdots y_n^{p_n}, y_0^{q_0}\cdots y_{t-1}^{q_{t-1}}y_{t+1}^{q_{t+1}}\cdots y_s^{-|g|}\cdots y_n^{q_n})\\
    &=\zeta_{t,s}(y_0^{p_0+q_0-d}\cdots y_{t-1}^{p_{t-1}+q_{t-1}}y_{t+1}^{p_{t+1}+q_{t+1}}\cdots y_s^{-|f|-|g|}\cdots y_n^{p_n+q_n})\\
    &=y_0^{p_0+q_0-d}\cdots y_t^{p_t+q_t+d}\cdots y_{s-1}^{p_{s-1}+q_{s-1}}y_{s+1}^{p_{s+1}+q_{s+1}}\cdots y_n^{p_n+q_n}\\
    &=y_t^d\cdot\pmul_s(f,g).
  \end{align*}
  On the other hand, $\zeta'_{t,s}(\pmul_t)(f,g)=0=y_t^d\cdot\pmul_s(f,g)$ trivially holds if $p_0+q_0< d$.
\end{proof}

There are proportional $n$-POS $(\zeta_{t,s}(\bou_t),\zeta_{t,s}(\bov_t))$, $(\bou_s,\bov_s)$ in $\mc A({}_\diamond\si,s)$ with $\bou_s=y_t^{d-1}\zeta_{t,s}(\bou_t)$ and $\bov_s=y_t\zeta_{t,s}(\bov_t)$. There is an isomorphism
\[
\xi_{t,s,(r)}^{\Tot}\colon \Tot(\tau^{r}\mc K^{\bullet,\bullet}(\zeta_{t,s}(\bou_t),\zeta_{t,s}(\bov_t); \mc A({}_\diamond\si,s)))\To \Tot(\tau^{r}\mc K^{\bullet,\bullet}(\bou_s,\bov_s; \mc A({}_\diamond\si,s)))
\]
as given in \eqref{eq:iso-double}. Since the $t$-th, $s$-th components of $\zeta_{t,s}(\bov_t)$ and $\bov_s$ are invertible, we have the diagram
\begin{equation}\label{eq:cd1}
\xymatrix{
\Tot(\tau^{r}\mc K^{\bullet,\bullet}(\zeta_{t,s}(\bou_t),\zeta_{t,s}(\bov_t); \mc A({}_\diamond\si,s))) \ar[dd]_-{\xi_{t,s,(r)}^{\Tot}}\ar[r]^-{\pi_{t,(r)}} & \tau^{\geq r}(\mc K^{\bullet}(\zeta_{t,s}(\bow_t); \mc A({}_\diamond\si,s))[-2r]) \ar[d]^-{\be'_{t,(r)}} \\
& \bar{C}^\bullet(\mc A(\si_\diamond,s),\mc A({}_\diamond\si,s)) \\
\Tot(\tau^{r}\mc K^{\bullet,\bullet}(\bou_s,\bov_s; \mc A({}_\diamond\si,s))) \ar[r]^-{\pi_{s,(r)}} & \tau^{\geq r}(\mc K^{\bullet}(\bow_s; \mc A({}_\diamond\si,s))[-2r]) \ar[u]_-{\be_{s,(r)}}
}
\end{equation}
where $\be'_{t,(r)}$ is induced by $\be_{t,(r)}$ and $\zeta_{t,s}$.

\begin{lemma}\label{lem:cd-compatible}
  The diagram \eqref{eq:cd1} is commutative.
\end{lemma}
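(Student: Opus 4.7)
The plan is to verify the commutativity of the diagram on an arbitrary generator $e_I = e_{i_1}\wedge\cdots\wedge e_{i_k}$ in bidegree $(r-k,r)$ of the top-left object, where $k = |I|$. Both compositions land in $\bar C^\bullet(\mc A(\si_\diamond,s),\mc A({}_\diamond\si,s))$, and by $\mc A({}_\diamond\si,s)$-linearity it suffices to compare them there as normalized Hochschild cochains.

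I will first record the effect of each component separately. At bidegree $(r-k,r)$, the scaling $\xi^{\Tot}_{t,s,(r)}$ multiplies by $\lam^{r-k}\mu^r = y_t^{dr-(d-1)k}$, using the proportionality $(\bou_s,\bov_s) = (y_t^{d-1}\zeta_{t,s}(\bou_t),\,y_t\zeta_{t,s}(\bov_t))$. Both projections have trivial inverse factors since $(\zeta_{t,s}(\bov_t))_t = (\bov_s)_s = 1$; the remaining components are $(\zeta_{t,s}(\bov_t))_\ell = y_\ell/y_t$ for $\ell \neq t,s$ and $1/y_t$ for $\ell = s$, whereas $(\bov_s)_\ell = y_\ell$ for $\ell \neq s$. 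Finally, $\be'_{t,(r)} = \zeta'_{t,s}\circ\be_{t,(r)}$ is computed via \eqref{eq:qiso-affine} and Lemma \ref{lem:pseudopartial-compatible}: the cochains $\ppl_t/\pl y_i$ (for $i \neq t,s$), $\ppl_t/\pl y_s$ and $\pmul_t$ become $y_t\ppl_s/\pl y_i$, $-\sum_{i\neq s}y_ty_i\ppl_s/\pl y_i$ and $y_t^d\pmul_s$ respectively, and these scalar factors pass out of the cup product since $\zeta'_{t,s}$ commutes with products.

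The verification splits into four cases according to whether $t$ and $s$ lie in $I$. In the baseline $t,s\notin I$, both projections act as the identity and the accumulated $y_t$-power from Lemma \ref{lem:pseudopartial-compatible}(1),(3) is $y_t^{k+d(r-k)} = y_t^{dr-(d-1)k}$, matching $\xi^{\Tot}$. The substantive case is $t = i_j \in I$, $s\notin I$, where the upper path reads
\[
\be'_{t,(r)}\pi_{t,(r)}(e_I) = -\sum_{\ell\neq t,s}(y_\ell/y_t)\,\be'_{t,(r)}(e_{I[j\to\ell]}) - (1/y_t)\,\be'_{t,(r)}(e_{I[j\to s]}),
\]
with $I[j\to\ell]$ obtained by replacing $t$ at slot $j$ by $\ell$. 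Lemma \ref{lem:pseudopartial-compatible}(2) turns the $\ell = s$ contribution into an Euler-type sum $-\sum_{i\neq s}y_i[\cdots \cup \ppl_s/\pl y_i\cup\cdots]$, which combined with the $\ell\neq t,s$ terms completes the sum over $i\neq s$; after cancellation only the $i = t$ term survives, producing $y_t^{dr-(d-1)k}\be_{s,(r)}(e_I)$. This agrees with the lower path, where $\pi_{s,(r)}$ acts trivially on $e_I$ (as $s\notin I$) and $\be_{s,(r)}$ directly produces a cochain containing $\ppl_s/\pl y_t$. The remaining cases $t\notin I, s\in I$ and $t,s\in I$ follow by a symmetric (and iterated) version of the same completion-of-sum argument.

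The main obstacle is precisely this combinatorial cancellation: the Euler-type sum in Lemma \ref{lem:pseudopartial-compatible}(2) must fit together with the partial sum defining $\pi_{t,(r)}$ so that all terms but one cancel, with scalar exactly equal to the scaling factor from $\xi^{\Tot}_{t,s,(r)}$. Sign bookkeeping follows from the super-commutativity of $\cup$ together with the anti-symmetry of the $\ppl_s/\pl y_\cdot$'s, so no subtlety arises there beyond standard Koszul conventions.
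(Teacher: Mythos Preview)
Your proposal is correct and follows essentially the same approach as the paper's proof: a four-way case split on whether $t$ and $s$ lie in $I$, with each case verified directly via Lemma~\ref{lem:pseudopartial-compatible} and the scaling factor $y_t^{dr-(d-1)k}$ from $\xi^{\Tot}_{t,s,(r)}$. Your high-level description of the two omitted cases is slightly imprecise---the case $t\notin I$, $s\in I$ needs no completion-of-sum (the Euler expansion from Lemma~\ref{lem:pseudopartial-compatible}(2) already matches $\pi_{s,(r)}$ term by term), and the case $t,s\in I$ relies on the total antisymmetry of the cup product in the odd factors $\ppl_s/\pl y_\cdot$ rather than on completion---but both cases are no harder than the one you spelled out in detail.
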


\begin{proof}
  Choose any base element $E=e_{i_1}\wedge\cdots \wedge e_{i_p}\in \mc K^{p}(\zeta_{t,s}(\bou_t),\zeta_{t,s}(\bov_t); \mc A({}_\diamond\si,s))$. When viewed as a cochain in $\tau^{r}\mc K^{\bullet,\bullet}(\zeta_{t,s}(\bou_t),\zeta_{t,s}(\bov_t); \mc A({}_\diamond\si,s))$, $E$ locates in position $(r-p,r)$. So $\xi_{t,s,(r)}^{\Tot}(E)=(y_t^{d-1})^{r-p}y_t^rE=y_t^{r+(d-1)(r-p)}E$. Let us prove the lemma by a case-by-case argument.

  If $t$, $s\notin\{i_1,\ldots,i_p\}$, then
  \begin{align*}
  \be'_{t,(r)}\circ\pi_{t,(r)}(E)&=\be'_{t,(r)}(\mf e_{i_1\ldots i_p}\mf s^{(r-p)})\\
  &=\zeta_{t,s}\circ(-1)^{\binom{p}{2}}\biggl(\frac{\ppl_t}{\pl y_{i_1}}\cup\cdots\cup \frac{\ppl_t}{\pl y_{i_{p}}}\cup\pmul_t^{\cup (r-p)}\biggr)\circ(\zeta_{s,t})^{\ot (2r-p)}\\
  &=(-1)^{\binom{p}{2}}\zeta'_{t,s}\biggl(\frac{\ppl_t}{\pl y_{i_1}}\biggr)\cup\cdots\cup \zeta'_{t,s}\biggl(\frac{\ppl_t}{\pl y_{i_{p}}}\biggr)\cup \bigl(\zeta'_{t,s}(\pmul_t)\bigr)^{\cup (r-p)}\\
  &=(-1)^{\binom{p}{2}}y_t\frac{\ppl_s}{\pl y_{i_1}}\cup\cdots\cup y_t\frac{\ppl_s}{\pl y_{i_{p}}}\cup (y_t^d\cdot\pmul_s)^{\cup (r-p)}\\
  &=y_t^{r+(d-1)(r-p)}(-1)^{\binom{p}{2}}\frac{\ppl_s}{\pl y_{i_1}}\cup\cdots\cup \frac{\ppl_s}{\pl y_{i_{p}}}\cup \pmul_s^{\cup (r-p)}\\
  &=y_t^{r+(d-1)(r-p)}\be_{s,(r)}(\mf e_{i_1\ldots i_p}\mf s^{(r-p)})\\
  &=\be_{s,(r)}\circ\pi_{s,(r)}\circ\xi_{t,s,(r)}^{\Tot}(E).
  \end{align*}

  If $s\notin\{i_1,\ldots,i_p\}$ and $t=i_j$ for some $j$, then
  \begin{align*}
  \be'_{t,(r)}\circ\pi_{t,(r)}(E)&=-\sum_{m\neq t}y_my_t^{-1}\zeta_{t,s}\circ(-1)^{\binom{p}{2}}\biggl(\frac{\ppl_t}{\pl y_{i_1}}\cup\cdots\cup \frac{\ppl_t}{\pl y_{i_{j-1}}}\cup \frac{\ppl_t}{\pl y_{m}}\cup \frac{\ppl_t}{\pl y_{i_{j+1}}}\\
  &\varphantom{=}{}\cup\cdots\cup \frac{\ppl_t}{\pl y_{i_{p}}}\cup\pmul_t^{\cup (r-p)}\biggr)\circ(\zeta_{s,t})^{\ot (2r-p)}\\
  &=-\sum_{m\neq t}y_my_t^{-1}(-1)^{\binom{p}{2}}\zeta'_{t,s}\biggl(\frac{\ppl_t}{\pl y_{i_1}}\biggr)\cup\cdots\cup \zeta'_{t,s}\biggl(\frac{\ppl_t}{\pl y_{m}}\biggr)\cup\cdots\\
  &\varphantom{=}{}\cup \zeta'_{t,s}\biggl(\frac{\ppl_t}{\pl y_{i_p}}\biggr)\cup \bigl(\zeta'_{t,s}(\pmul_t)\bigr)^{\cup (r-p)}\\
  &=-\sum_{m\neq t,s}y_my_t^{r+(d-1)(r-p)-1}(-1)^{\binom{p}{2}}\frac{\ppl_s}{\pl y_{i_1}}\cup\cdots\cup \frac{\ppl_s}{\pl y_{m}}\cup\cdots\cup \frac{\ppl_s}{\pl y_{i_p}}\\
  &\varphantom{=}{}\cup (\pmul_s)^{\cup (r-p)}-y_t^{r+(d-1)(r-p)-2}(-1)^{\binom{p}{2}}\frac{\ppl_s}{\pl y_{i_1}}\cup\cdots\\
  &\varphantom{=}{}\cup \biggl(-\sum_{i\neq s}y_ty_i\frac{\ppl_s}{\pl y_{i}}\biggr)\cup\cdots\cup \frac{\ppl_s}{\pl y_{i_p}}\cup (\pmul_s)^{\cup (r-p)}\\
  &=y_t^{r+(d-1)(r-p)}(-1)^{\binom{p}{2}}\frac{\ppl_s}{\pl y_{i_1}}\cup\cdots\cup \frac{\ppl_s}{\pl y_{t}}\cup\cdots\cup \frac{\ppl_s}{\pl y_{i_p}}\cup (\pmul_s)^{\wedge (r-p)}\\
  &=y_t^{r+(d-1)(r-p)}\be_{s,(r)}(\mf e_{i_1\ldots i_p}\mf s^{(r-p)})\\
  &=\be_{s,(r)}\circ\pi_{s,(r)}\circ\xi_{t,s,(r)}^{\Tot}(E).
  \end{align*}

  If $t\notin\{i_1,\ldots,i_p\}$ and $s=i_l$ for some $l$, then
  \begin{align*}
  \be'_{t,(r)}\circ\pi_{t,(r)}(E)&=(-1)^{\binom{p}{2}}\zeta'_{t,s}\biggl(\frac{\ppl_t}{\pl y_{i_1}}\biggr)\cup\cdots\cup \zeta'_{t,s}\biggl(\frac{\ppl_t}{\pl y_{i_{p}}}\biggr)\cup \bigl(\zeta'_{t,s}(\pmul_t)\bigr)^{\cup (r-p)}\\
  &=y_t^{r+(d-1)(r-p)-1}(-1)^{\binom{p}{2}}\frac{\ppl_s}{\pl y_{i_1}}\cup\cdots\cup \biggl(-\sum_{m\neq s}y_ty_m\frac{\ppl_s}{\pl y_{m}}\biggr) \cup\cdots\\
  &\varphantom{=}{}\cup \frac{\ppl_s}{\pl y_{i_{p}}}\cup \pmul_s^{\wedge (r-p)}\\
  &=-y_t^{r+(d-1)(r-p)}\sum_{m\neq s}y_m(-1)^{\binom{p}{2}}\frac{\ppl_s}{\pl y_{i_1}}\cup\cdots\cup \frac{\ppl_s}{\pl y_{m}} \cup\cdots\cup \frac{\ppl_s}{\pl y_{i_{p}}}\\
  &\varphantom{=}{}\cup \pmul_s^{\cup (r-p)}\\
  &=-y_t^{r+(d-1)(r-p)}\sum_{m\neq s}y_m\be_{s,(r)}(\mf e_{i_1\ldots m\ldots i_p}\mf s^{(r-p)})\\
  &=\be_{s,(r)}\circ\pi_{s,(r)}\circ\xi_{t,s,(r)}^{\Tot}(E).
  \end{align*}

  If $t=i_j$ and $s=i_l$ for some $j$, $l$ then
  \begin{align*}
  \be'_{t,(r)}\circ\pi_{t,(r)}(E)&=-\sum_{m\neq t}y_my_t^{-1}\zeta_{t,s}\circ(-1)^{\binom{p}{2}}\biggl(\frac{\ppl_t}{\pl y_{i_1}}\cup\cdots\cup \frac{\ppl_t}{\pl y_{i_{j-1}}}\cup \frac{\ppl_t}{\pl y_{m}}\cup \frac{\ppl_t}{\pl y_{i_{j+1}}}\\
  &\varphantom{=}{}\cup\cdots\cup \frac{\ppl_t}{\pl y_{s}}\cup\cdots\cup \frac{\ppl_t}{\pl y_{i_{p}}}\cup\pmul_t^{\cup (r-p)}\biggr)\circ(\zeta_{s,t})^{\ot (2r-p)}\\
  &=-\sum_{m\neq t,s}y_my_t^{-1}(-1)^{\binom{p}{2}}\zeta'_{t,s}\biggl(\frac{\ppl_t}{\pl y_{i_1}}\biggr)\cup\cdots\cup \zeta'_{t,s}\biggl(\frac{\ppl_t}{\pl y_{m}}\biggr)\cup\cdots\\
  &\varphantom{=}{}\cup \zeta'_{t,s}\biggl(\frac{\ppl_t}{\pl y_{s}}\biggr)\cup \cdots\cup \zeta'_{t,s}\biggl(\frac{\ppl_t}{\pl y_{i_p}}\biggr)\cup \bigl(\zeta'_{t,s}(\pmul_t)\bigr)^{\cup (r-p)}\\
  &=-\sum_{m\neq t,s}y_my_t^{r+(d-1)(r-p)-2}(-1)^{\binom{p}{2}}\frac{\ppl_s}{\pl y_{i_1}}\cup\cdots\cup \frac{\ppl_s}{\pl y_{m}}\cup\cdots\\
  &\varphantom{=}{}\cup \biggl(-\sum_{i\neq s}y_ty_i\frac{\ppl_s}{\pl y_{i}}\biggr)\cup\cdots\cup \frac{\ppl_s}{\pl y_{i_p}}\cup (\pmul_s)^{\cup (r-p)}\\
  &=y_t^{r+(d-1)(r-p)-1}\sum_{m\neq t,s}\sum_{i\neq s}y_my_i(-1)^{\binom{p}{2}}\frac{\ppl_s}{\pl y_{i_1}}\cup\cdots\cup \frac{\ppl_s}{\pl y_{m}}\cup\cdots\\
  &\varphantom{=}{}\cup \frac{\ppl_s}{\pl y_{i}}\cup\cdots\cup \frac{\ppl_s}{\pl y_{i_p}}\cup (\pmul_s)^{\cup (r-p)}\\
  &=y_t^{r+(d-1)(r-p)-1}\sum_{m\neq t,s}y_my_t(-1)^{\binom{p}{2}}\frac{\ppl_s}{\pl y_{i_1}}\cup\cdots\cup \frac{\ppl_s}{\pl y_{m}}\cup\cdots\\
  &\varphantom{=}{}\cup \frac{\ppl_s}{\pl y_{t}}\cup\cdots\cup \frac{\ppl_s}{\pl y_{i_p}}\cup (\pmul_s)^{\cup (r-p)}\\
  &=-y_t^{r+(d-1)(r-p)}\sum_{m\neq s}y_m(-1)^{\binom{p}{2}}\frac{\ppl_s}{\pl y_{i_1}}\cup\cdots\cup \frac{\ppl_s}{\pl y_{t}}\cup\cdots\cup \frac{\ppl_s}{\pl y_{m}}\cup\\
  &\varphantom{=}{}\cdots\cup \frac{\ppl_s}{\pl y_{i_p}}\cup (\pmul_s)^{\cup (r-p)}\\
  &=-y_t^{r+(d-1)(r-p)}\sum_{m\neq s}y_m\be_{s,(r)}(\mf e_{i_1\ldots m\ldots i_p}\mf s^{(r-p)})\\
  &=\be_{s,(r)}\circ\pi_{s,(r)}\circ\xi_{t,s,(r)}^{\Tot}(E). \qedhere
  \end{align*}
\end{proof}

Therefore we obtain a commutative diagram
\[
\xymatrix@C=10mm{
\displaystyle\bigoplus_{r\in\nan}\Tot(\tau^{r}\mc K^{\bullet,\bullet}(\bou_t,\bov_t; \mc A({}_\diamond\si,t))) \ar[d]_-{\zeta_{t,s}}\ar[r]^-{\be_t\circ\pi_{t}} & \bar{C}^\bullet(\mc A(\si_\diamond,t),\mc A({}_\diamond\si,t)) \ar[d]^-{\zeta'_{t,s}} \\
\displaystyle\bigoplus_{r\in\nan}\Tot(\tau^{r}\mc K^{\bullet,\bullet}(\zeta_{t,s}(\bou_t),\zeta_{t,s}(\bov_t); \mc A({}_\diamond\si,s))) \ar[d]_-{\xi_{t,s}^{\Tot}}\ar[r]^-{\be'_t\circ\pi_{t}} & \bar{C}^\bullet(\mc A(\si_\diamond,s),\mc A({}_\diamond\si,s)) \ar@{=}[d] \\
\displaystyle\bigoplus_{r\in\nan}\Tot(\tau^{r}\mc K^{\bullet,\bullet}(\bou_s,\bov_s; \mc A({}_\diamond\si,s))) \ar[r]^-{\be_s\circ\pi_{s}} & \bar{C}^\bullet(\mc A(\si_\diamond,s),\mc A({}_\diamond\si,s))
}
\]
where the vertical morphisms are isomorphisms and the horizontal ones are quasi-isomorphisms. Let $\xi'_{t,s}=\xi_{t,s}^{\Tot}\circ\zeta_{t,s}$. The twisting number $r+(d-1)(r-d)$ of the $(r-p,r)$-entry in Figure \ref{fig:truncation-S} coincides with the exponent of $y_t$ in the proof of Lemma \ref{lem:cd-compatible}. This is equivalent to say that $\xi'_{t,s}$ is the canonical automorphism of $\mc F^\bullet({}_\diamond\si)$ if we write $\mc A({}_\diamond\si)$ in terms of different generators and relations. Moreover, it is easy to check the coherence conditions
\[
\xi'_{s,u}\circ\xi'_{t,s}=\xi'_{t,u},\quad \zeta'_{s,u}\circ\zeta'_{t,s}=\zeta'_{t,u}
\]
hold true for any additional $u\in\Phi(\si_\diamond)$. This gives rise to well-defined morphisms
\[
\ga_{\si}=\be\circ\pi\colon\mc F^\bullet({}_\diamond\si)\to \bar{C}^\bullet(\mc A(\si_\diamond), \mc A({}_\diamond\si))
\]
for all simplices $\si\in\mc N_\bullet(\mf V)$ which commute with simplicial differentials. Remember that $\be$ and $\pi$ preserve the Hodge decomposition. These facts are summarized as

\begin{theorem}\label{thm:q-iso-GS}
Let $\mc E^{\bullet,\bullet}=\oplus_{r\in\nan}\mc E^{\bullet,\bullet}_r$. The morphisms $\ga_{\si}\colon\mc F^\bullet({}_\diamond\si)\to \bar{C}^\bullet(\mc A(\si_\diamond), \mc A({}_\diamond\si))$ for all simplices $\si$ on $\mf V$ constitute a morphism $\ga\colon\mc E^{\bullet,\bullet}\to\bar{\mbf C}^{\prime\bullet,\bullet}(\mc A)$ of double complexes that gives rise to a quasi-isomorphism $\mc E^{\bullet}\to\bar{\mbf C}^{\prime\bullet}_{\GS}(\mc A)$. Moreover, $\ga$ preserves the Hodge decomposition.
\end{theorem}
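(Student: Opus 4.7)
The plan is to establish three things: (i) the family $\{\gamma_\sigma\}$ assembles into a morphism of double complexes $\gamma\colon\mc E^{\bullet,\bullet}\to\bar{\mbf{C}}^{\prime\bullet,\bullet}(\mc A)$; (ii) the induced map on total complexes is a quasi-isomorphism; (iii) $\gamma$ respects the Hodge decomposition. The well-definedness of each $\gamma_\sigma$ (independence of the choice of $t\in\Phi(\sigma_\diamond)$) is already secured by the commutative diagram just before the theorem, which combines Lemma \ref{lem:cd-compatible} with the coherence relations $\xi'_{s,u}\circ\xi'_{t,s}=\xi'_{t,u}$ and $\zeta'_{s,u}\circ\zeta'_{t,s}=\zeta'_{t,u}$.

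For (i), commutation with the simplicial differentials is formal: both sides carry simplicial differentials given by alternating sums of face operators pulled back from inclusions $V\subseteq V'$ of opens in $\mf V$, and the construction $\gamma_\sigma=\be\circ\pi$ is natural in the base since $\be$ and $\pi$ are defined fibrewise over the affine pieces. Commutation with the vertical differential is built into the fact that $\be_t$ and $\pi_{(r)}$ are themselves chain maps, which was verified in \S\ref{sec:HHaffine} and Proposition \ref{prop:koszul-quasi} respectively.

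For (ii), I would run a standard spectral sequence comparison. For each fixed simplicial degree $p$, the column-wise component
\[
\gamma^{p,\bullet}\colon \mc E^{p,\bullet}=\prod_{\sigma\in\mc N_p(\mf V)}\mc F^{\bullet}({}_\diamond\sigma)\To \bar{\mbf{C}}^{\prime p,\bullet}(\mc A)=\prod_{\sigma\in\mc N_p(\mf V)}\bar C^{\bullet}(\mc A(\sigma_\diamond),\mc A({}_\diamond\sigma))
\]
is a finite product (as $\mc N_\bullet(\mf V)$ is bounded by $n$) of the maps $\gamma_\sigma$, each of which is a quasi-isomorphism: indeed, $\gamma_\sigma=\be_t\circ\pi_{t,(r)}$ after choosing some $t\in\Phi(\sigma_\diamond)$, and both factors are quasi-isomorphisms by Proposition \ref{prop:koszul-quasi} and by the affine quasi-isomorphism recalled at the end of \S\ref{sec:HHaffine}. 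Filtering both double complexes by the simplicial degree produces convergent spectral sequences whose $E_1$-pages are identified by these column-wise quasi-isomorphisms, and the total-complex statement follows.

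For (iii), both $\be=\bigoplus_r\be_{(r)}$ and $\pi=\bigoplus_r\pi_{(r)}$ split according to the Hodge grading, and the transition maps $\xi'_{t,s}$, $\zeta'_{t,s}$ act within each Hodge component only by invertible scalar rescaling; hence $\gamma=\bigoplus_r\gamma_r$ with $\gamma_r\colon\mc E^{\bullet,\bullet}_r\to\bar{\mbf{C}}^{\prime\bullet,\bullet}_r(\mc A)$, and the quasi-isomorphism statement restricts to each summand. The only genuinely non-trivial step is the column-wise quasi-isomorphism appearing in (ii), and this is already reduced to Proposition \ref{prop:koszul-quasi} together with Lemma \ref{lem:cd-compatible}; the remainder is bookkeeping that I do not expect to raise further difficulty.
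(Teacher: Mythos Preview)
Your proof is correct and follows the paper's approach: the theorem is stated there as a summary of the preceding construction (Lemmas \ref{lem:pseudopartial-compatible}, \ref{lem:cd-compatible}, the coherence conditions, and the Hodge-preservation of $\be$ and $\pi$), and you have simply made explicit the standard column-filtration spectral sequence comparison that the paper leaves implicit. The only minor remark is that your phrase ``formal'' for the simplicial compatibility is slightly glib---it is precisely the coherence $\xi'_{s,u}\xi'_{t,s}=\xi'_{t,u}$, $\zeta'_{s,u}\zeta'_{t,s}=\zeta'_{t,u}$ together with Lemma \ref{lem:cd-compatible} that makes the $\gamma_\sigma$ compatible with face maps---but you do cite these ingredients, so the argument is complete.
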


Recall that $F$ is required to contain $x_0^d$ as a summand. We claim that this condition is not too restrictive. In fact, a homogeneous polynomial $F\in R$ of degree $d$ always has the expression
\[
F(x_0,x_1,\ldots,x_n)=\sum_{i_1,\ldots,i_n}\mu_{i_1,\ldots,i_n}x_0^{d-i_1-\cdots-i_n}x_1^{i_1}\cdots x_n^{i_n}
\]
where $\mu_{i_1,\ldots,i_n}\in k$. Let $\Si \colon R\to R$ be the automorphism of graded algebras determined by
\[
x_0\mapsto x_0,\quad x_j\mapsto x_j+\lam_jx_0\quad (1\leq j\leq n)
\]
where $\lam_j$ are undetermined coefficients. So
\begin{align*}
\Si(F)&=\sum_{i_1,\ldots,i_n}\mu_{i_1,\ldots,i_n}x_0^{d-i_1-\cdots-i_n}(x_1+\lam_1x_0)^{i_1}\cdots (x_n+\lam_nx_0)^{i_n}\\
&=\sum_{i_1,\ldots,i_n}\mu_{i_1,\ldots,i_n}\lam_1^{i_1}\cdots\lam_n^{i_n}x_0^d+\cdots.
\end{align*}
If $\mu_{0,\ldots,0}=0$, there exist at least an array $(i_1,\ldots, i_n)$ such that $\mu_{i_1,\ldots,i_n}\neq 0$ and one of $i_l>0$ for $1\leq l\leq n$. So one can choose proper $\lam_j$ making
\[
\sum_{i_1,\ldots,i_n}\mu_{i_1,\ldots,i_n}\lam_1^{i_1}\cdots\lam_n^{i_n}=1.
\]
Therefore, $\Proj R/(F)\cong \Proj R/(\Si(F))$ with $\Si(F)$ containing $x_0^d$ as a summand. With this isomorphism, most conclusions in \S \ref{sec:HHproj} remain true for an arbitrary homogeneous polynomial $F$. Throughout the paper, to facilitate the computations, we maintain the condition that $F$ contains $x_0^d$ as a summand.

\subsection{The cotangent complex of a hypersurface}\label{subsec:cotangent-complex}
As stated in the beginning of \S\ref{sec:HHproj}, an explicit expression of $\mbb L_{X/Y}$ is given by Berthelot, when $X\to Y$ factors as a closed immersion $X\to X'$ followed by a smooth morphism $X'\to Y$. Let us recall it in the special case when $Y=\mathrm{Spec}\,k$ and $X=\Proj S$.

Obviously, $X'$ can be chosen to be $\Proj R=\mbb P^n$ and so the factorization $X\xrightarrow{\imath}\mbb P^n\to \mathrm{Spec}\,k$ satisfies the condition. Let $\mc O=\mc O_{\mbb P^n}$, and let $\mc I\subset \mc O$ be the sheaf of ideals determined by the closed immersion $X\to \mbb P^n$. By definition, $\mbb L_{X/k}^0=\imath^*\Om_{\mbb P^n}$, $\mbb L^{-1}_{X/k}=\mc I/\mc I^2$, and other $\mbb L_{X/k}^j$ are all zero, the differential $\mc I/\mc I^2=\imath^*\mc I\to \imath^*\Om_{\mbb P^n}$ is induced by $\mc I\hookrightarrow \mc O \xrightarrow{\mathrm{d}}\Om_{\mbb P^n}$.

Note that there is a complex of graded modules
\[
0\To S(-d)\xrightarrow[\quad]{\pl_{\bou}} S(-1)^{n+1}\xrightarrow[\quad]{\pl_{\bov}} S\To 0
\]
concentrated in degrees $-1$, $0$ and $1$. It gives rise to a complex
\[
0\To\mc O_X(-d)\xrightarrow[\quad]{\pl_{\bou}} \mc O_X(-1)^{n+1} \xrightarrow[\quad]{\pl_{\bov}} \mc O_X\To 0
\]
of $\mc O_X$-modules. It is easy to check that  the complex is the same as $\mc F_1^{\bullet\vee}[-1]$ where $(-)^\vee=\mc{H}om_{\mc O_X}(-,\mc O_X)$.

\begin{proposition}\label{prop:cotangent-complex}
There is a quasi-isomorphism $\mbb L_{X/k}\to \mc F_1^{\bullet\vee}[-1]$.
\end{proposition}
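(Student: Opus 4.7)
The plan is to write down an explicit two-term morphism $\vphi \colon \mbb L_{X/k} \to \mc F_1^{\bullet\vee}[-1]$ and verify the quasi-isomorphism property directly by computing cohomology degree by degree, using the Euler sequence on $\mbb P^n$ as the pivotal tool.

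First I would unwind both sides. Since $X = V(F)$ is a hypersurface of degree $d$, the ideal sheaf is $\mc I = \mc O_{\mbb P^n}(-d)$, hence $\mc I/\mc I^2 \cong \mc O_X(-d)$. So $\mbb L_{X/k}$ is the two-term complex $\mc O_X(-d) \xrightarrow{\de} \imath^*\Om_{\mbb P^n}$ in degrees $-1, 0$, where $\de$ is the conormal map (locally $F \mapsto dF$), while $\mc F_1^{\bullet\vee}[-1]$ is $\mc O_X(-d) \xrightarrow{\pl_{\bou}} \mc O_X(-1)^{n+1} \xrightarrow{\pl_{\bov}} \mc O_X$ in degrees $-1, 0, 1$.

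Next I would invoke the Euler sequence
\[
0 \To \Om_{\mbb P^n} \To \mc O_{\mbb P^n}(-1)^{n+1} \xrightarrow{\bov} \mc O_{\mbb P^n} \To 0,
\]
which is locally split (all terms being locally free), so pulling back along $\imath$ keeps it short exact. Define $\vphi^{-1} = \id_{\mc O_X(-d)}$ and $\vphi^0$ to be the resulting Euler inclusion $\imath^*\Om_{\mbb P^n} \hookrightarrow \mc O_X(-1)^{n+1}$. Commutativity with the differentials amounts to checking $\vphi^0 \circ \de = \pl_{\bou}$: locally $\de$ sends the generator $F$ of $\mc I/\mc I^2$ to $dF$, whose image in $\mc O_X(-1)^{n+1}$ under the Euler embedding is the tuple $(\pl F/\pl x_0, \ldots, \pl F/\pl x_n)$, which is exactly $\pl_{\bou}$ applied to the generator. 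Euler's identity $\sum_i x_i (\pl F/\pl x_i) = d\cdot F$, which vanishes on $X$, is precisely what guarantees this tuple lies in $\ker(\pl_{\bov}) = \imath^*\Om_{\mbb P^n}$ in the first place.

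Finally I would verify the quasi-isomorphism by computing cohomology. Exactness of the pulled-back Euler sequence gives $H^1(\mc F_1^{\bullet\vee}[-1]) = \coker(\pl_{\bov}) = 0 = H^1(\mbb L_{X/k})$ and identifies $\ker(\pl_{\bov}) = \imath^*\Om_{\mbb P^n}$, so $H^0(\mc F_1^{\bullet\vee}[-1]) = \imath^*\Om_{\mbb P^n}/\im(\pl_{\bou}) = \coker(\de) = H^0(\mbb L_{X/k})$; injectivity of $\vphi^0$ yields $\ker(\pl_{\bou}) = \ker(\de)$, so $H^{-1}$ agrees as well. The only substantive step is the identification $\vphi^0 \circ \de = \pl_{\bou}$, which is essentially Euler's formula; I expect this to be the sole (and very short) obstacle, since everything else is a formal consequence of the Euler sequence being short exact after pullback.
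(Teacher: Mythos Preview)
Your proof is correct and follows essentially the same approach as the paper: both invoke the Euler sequence, argue that its pullback to $X$ remains short exact (you via local splitting from local freeness, the paper via localization at points and freeness of $\mc O_{\imath(x)}$), and then read off the quasi-isomorphism from the resulting identification $\imath^*\Om_{\mbb P^n} \cong \ker(\pl_{\bov})$. Your explicit check that $\vphi^0 \circ \de = \pl_{\bou}$ via Euler's identity is a bit more detailed than the paper's presentation, but the substance is identical.
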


\begin{proof}
First of all, the sheaf of ideals $\mc I$ corresponds to the principal ideal $(F)$. So as $\mc O_X$-modules, $\mc I/\mc I^2$ is isomorphic to $\mc O_X(-d)$ since $\deg F=d$.

Next, there is an exact sequence
\begin{equation}\label{eq:Omega-P}
0\To \Om_{\mbb P^n}\To \mc O(-1)^{n+1}\xrightarrow[\quad]{\pl_{\bov}} \mc O\To 0
\end{equation}
by \cite[Thm.\ 8.13]{Hartshorne:alg-geom}. Since $\imath^*$ is right exact, we have an exact sequence
\[
\imath^*\Om_{\mbb P^n}\To \mc O_X(-1)^{n+1}\xrightarrow[\quad]{\pl_{\bov}} \mc O_X\To 0.
\]
We claim that the map $\imath^*\Om_{\mbb P^n}\to \mc O_X(-1)^{n+1}$ is injective. In fact, for any point $x\in X$, by localizing \eqref{eq:Omega-P} at the point $\imath(x)$ we obtain a split exact sequence
\[
0\To \Om_{\mbb P^n,\imath(x)}\To \mc O(-1)^{n+1}_{\imath(x)}\To \mc O_{\imath(x)}\To 0
\]
since $\mc O_{\imath(x)}$ is a free module over itself. After tensoring it with $\mc O_{X,x}$ over $\mc O_{\imath(x)}$, we in turn have that
\[
0\To \imath^*\Om_{\mbb P^n,x}\To \mc O_X(-1)^{n+1}_x\To \mc O_{X,x}\To 0
\]
is split exact. It follows that
\begin{equation}\label{eq:Omega-iP}
0\To\imath^*\Om_{\mbb P^n}\To \mc O_X(-1)^{n+1}\xrightarrow[\quad]{\pl_{\bov}} \mc O_X\To 0
\end{equation}
is actually exact.

Thus there exists a morphism $\mbb L_{X/k}\to \mc F_1^{\vee\bullet}[-1]$ given by
\[
\xymatrix{
\cdots \ar[r] & 0\ar[r] & \mc O_X(-d) \ar[r]^-{\pl_{\bou}} & \mc O_X(-1)^{n+1} \ar[r]^-{\pl_{\bov}} & \mc O_X \ar[r] & 0 \ar[r] & \cdots \\
\cdots \ar[r] & 0\ar[r]\ar[u] & \mc I/\mc I^2 \ar[r]^{\mathrm{d}}\ar[u]^-{\cong} & \imath^*\Om_{\mbb P^n} \ar[r]\ar[u] & 0  \ar[r]\ar[u] & 0 \ar[r]\ar[u] & \cdots
}
\]
which is a quasi-isomorphism since \eqref{eq:Omega-iP} is exact.
\end{proof}

\begin{corollary}
In the derived category $\DD(X)$, $\wedge^r\mbb L_{X/k}\cong \mc F_r^{\bullet\vee}[-r]$ for any $r\in\nan$.
\end{corollary}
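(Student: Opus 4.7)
The plan is to apply the derived exterior power functor $\wedge^r$ to the quasi-isomorphism $\mbb L_{X/k}\simeq \mc F_1^{\bullet\vee}[-1]$ from Proposition \ref{prop:cotangent-complex} and to identify the outcome with $\mc F_r^{\bullet\vee}[-r]$ on the nose.

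First I would fix $E^\bullet:=\mc F_1^{\bullet\vee}[-1]$ as the working representative of $\mbb L_{X/k}$, with $E^{-1}=\mc O_X(-d)$, $E^{0}=\mc O_X(-1)^{n+1}$ and $E^{1}=\mc O_X$. Since $E^\bullet$ is a bounded complex of locally free $\mc O_X$-modules, the derived exterior power $\wedge^r \mbb L_{X/k}$ is computed by the naive graded-commutative exterior power of $E^\bullet$, in which odd-degree generators contribute divided powers and even-degree generators contribute ordinary exterior powers. This gives
\[
\wedge^r E^\bullet\;\cong\;\bigoplus_{a+b+c=r}\varGamma^a(E^{-1})\otimes\wedge^b(E^{0})\otimes\varGamma^c(E^{1}),
\]
the $(a,b,c)$-summand sitting in total degree $c-a$. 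Since $E^{-1}$ and $E^{1}$ are line bundles, divided powers collapse to ordinary tensor powers (we are in characteristic zero), so the summand simplifies to $\mc O_X(-ad-b)^{\binom{n+1}{b}}$.

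Next I would read off the degree-$j$ term of $\mc F_r^{\bullet\vee}[-r]$ directly from Figure \ref{fig:truncation-S}: dualizing the entry at position $(p,q)$ and applying the shift by $[-r]$ places $\mc O_X(-(q+p(d-1)))^{\binom{n+1}{q-p}}$ in total degree $r-(p+q)$, summed over $0\leq p\leq q\leq r$ with $p+q=r-j$. The substitution $p=a$, $b=q-p$, $c=r-q$ then gives a bijection of summands under which the twists $-ad-b=-(q+p(d-1))$, the ranks $\binom{n+1}{b}=\binom{n+1}{q-p}$, and the total degrees $c-a=r-(p+q)=j$ all coincide, so the two complexes agree term by term.

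Finally I would match the differentials. The Leibniz rule breaks the total differential of $\wedge^r E^\bullet$ into two pieces, one induced by $d_1\colon E^{-1}\to E^0$ (the dual of $\pl_{\bou}$, i.e.\ wedging with $\sum_i u_ie_i^*$) and one induced by $d_2\colon E^0\to E^1$ (the dual of $\pl_{\bov}$, i.e.\ contraction with $\bov$). Under the bijection above, these two pieces correspond exactly to the dual horizontal and vertical arrows of $\tau^r\mc K^{\bullet,\bullet}(\bou,\bov;S)$, and their anticommutativity is the mixed-complex identity $\pl_{\bou}\pl_{\bov}+\pl_{\bov}\pl_{\bou}=0$ proved in Section \ref{sec:mixedcomp}. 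The main obstacle is this last step: it amounts to careful sign-bookkeeping to confirm that the Koszul signs produced by Leibniz on the $\varGamma^\bullet$ and $\wedge^\bullet$ factors agree, after dualization and the shift by $[-r]$, with the signs of the Koszul differentials in Figure \ref{fig:truncation-S}.
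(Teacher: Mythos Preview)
Your approach is correct and takes a genuinely different route from the paper's. The paper works with the two-term model $K^{-1}=\mc I/\mc I^2\to K^0=\imath^*\Om_{\mbb P^n}$ of $\mbb L_{X/k}$, applies Saito's explicit formula for $\wedge^r$ of such a two-term complex with invertible $K^{-1}$ (producing terms $\imath^*\Om_{\mbb P^n}^{r-s}(-sd)$), and then \emph{resolves} each $\imath^*\Om_{\mbb P^n}^{l}$ by the pulled-back generalized Euler sequence
\[
0\To\imath^*\Om_{\mbb P^n}^{l}\To\mc O_X(-l)^{\binom{n+1}{l}}\xrightarrow{\;\pl_{\bov}\;}\cdots\xrightarrow{\;\pl_{\bov}\;}\mc O_X\To 0.
\]
Stacking these column resolutions and lifting the differentials $d_{\wedge^r}$ to $\pl_{\bou}$ yields the triangular double complex whose totalization is $\mc F_r^{\bullet\vee}[-r]$; exactness of the columns gives the quasi-isomorphism. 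Your route instead feeds the three-term locally free model $E^\bullet=\mc F_1^{\bullet\vee}[-1]$ (already expressed in twists of $\mc O_X$) directly into the graded-commutative exterior-power formula, so that $\mc F_r^{\bullet\vee}[-r]$ emerges term by term with no intermediate resolution step. What you gain is a clean combinatorial identification via the substitution $(a,b,c)\leftrightarrow(p,q-p,r-q)$; what the paper gains is reliance on a cited formula for two-term complexes rather than the general derived-exterior-power recipe for a representative with a term in positive degree, together with the geometric content of the Euler-type sequences. Your caveat about signs is fair, and the paper's argument hides the same bookkeeping inside the assertion that the $\pl_{\bou}$ lift the $d_{\wedge^r}$ compatibly with the Koszul sign rule.
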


\begin{proof}
The derived exterior product $\wedge^rK^\bullet$ has been described by T.\ Saito in \cite[\S4]{Takeshi:exterior-prod-2-terms} when $K^\bullet$ is a complex of locally free $\mc O_X$-modules of finite rank and $K^j=0$ for all $j\neq -1$, $0$ on any scheme $X$. In particular, if $K^{-1}$ is invertible, $\wedge^rK^\bullet$ is given by
\[
(K^{-1})^{\ot r}\To K^0\ot(K^{-1})^{\ot r-1}\To\cdots\To\wedge^{r-s}K^0 \ot(K^{-1})^{\ot s} \To\cdots\To \wedge^rK^0
\]
with the differentials $d_{\wedge^r}$ defined by $d_{\wedge^r}^{-s}=(\wedge^{r-s}\id_{K^0})\wedge d_K^{-1}\ot(\id^{\ot s-1}_{K^{-1}})$.

In our situation, $\mc I/\mc I^2\cong \mc O_X(-d)$ is invertible, and $\imath^*\Om_{\mbb P^n}$ is locally free of rank $n$. So the above form can be applied to $K^\bullet=\mbb L_{X/k}$. The $(-s)$-th term of $\wedge^r\mbb L_{X/k}$ is
\[
\wedge^{r-s}\imath^*\Om_{\mbb P^{n}}\ot(\mc I/\mc I^2)^{\ot s}\cong \imath^*\Om_{\mbb P^{n}}^{r-s}(-sd).
\]
Recall the exact sequence \eqref{eq:Omega-P}. It can be generalized to the long exact sequence
\[
0\To\Om_{\mbb P^n}^l\To \mc O(-l)^{\binom{n+1}{l}}\xrightarrow[\quad]{\pl_{\bov}} \mc O(-l+1)^{\binom{n+1}{l-1}} \xrightarrow[\quad]{\pl_{\bov}}\cdots\xrightarrow[\quad]{\pl_{\bov}}  \mc O(-1)^{n+1}\xrightarrow[\quad]{\pl_{\bov}} \mc O\To 0
\]
for any $l\in\nan$.\footnote{The proof is parallel to the one of \cite[Thm.\ 8.13]{Hartshorne:alg-geom}.} Just like the proof of Proposition \ref{prop:cotangent-complex}, we use the localization and then deduce
\[
0\To\imath^*\Om_{\mbb P^n}^l\To \mc O_X(-l)^{\binom{n+1}{l}}\xrightarrow[\quad]{\pl_{\bov}} \mc O_X(-l+1)^{\binom{n+1}{l-1}} \xrightarrow[\quad]{\pl_{\bov}}\cdots\xrightarrow[\quad]{\pl_{\bov}}  \mc O_X(-1)^{n+1}\xrightarrow[\quad]{\pl_{\bov}} \mc O_X\To 0
\]
is also exact.

These sequences constitute the diagram as follows,
\[
\xymatrix@C=6mm{
& & & & \mc O_X \\
& & & \mc O_X(-d) \ar[r]^-{\pl_{\bou}} & \mc O_X(-1)^{n+1} \ar[u]_-{\pl_{\bov}} \\
& & \iddots & \vdots \ar[u]_-{\pl_{\bov}} & \vdots \ar[u]_-{\pl_{\bov}} \\
& \mc O_X(d-rd) \ar[r]^-{\pl_{\bou}} & \cdots \ar[r]^-{\pl_{\bou}} & \mc O_X(2-r-d)^{\binom{n+1}{r-2}} \ar[r]^-{\pl_{\bou}}\ar[u]_-{\pl_{\bov}} & \mc O_X(-r+1)^{\binom{n+1}{r-1}} \ar[u]_-{\pl_{\bov}} \\
\mc O_X(-rd) \ar[r]^-{\pl_{\bou}} & \mc O_X(d-rd-1)^{n+1} \ar[r]^-{\pl_{\bou}}\ar[u]_-{\pl_{\bov}} & \cdots \ar[r]^-{\pl_{\bou}} & \mc O_X(1-r-d)^{\binom{n+1}{r-1}} \ar[r]^-{\pl_{\bou}}\ar[u]_-{\pl_{\bov}} & \mc O_X(-r)^{\binom{n+1}{r}} \ar[u]_-{\pl_{\bov}} \\
\mc O_X(-rd) \ar[r]^-{d_{\wedge^r}^{-r}}\ar[u] & \imath^*\Om_{\mbb P^n}(d-rd) \ar[r]^-{d_{\wedge^r}^{-r+1}}\ar[u] & \cdots \ar[r]^-{d_{\wedge^r}^{-2}} & \imath^*\Om_{\mbb P^n}^{r-1}(-d) \ar[r]^-{d_{\wedge^r}^{-1}}\ar[u] & \imath^*\Om_{\mbb P^n}^r \ar[u]
}
\]
where each column is exact, and the maps $\pl_{\bou}$ lift the differentials $d_{\wedge^r}$ since $d_{\wedge^r}$ are induced by the sequence $\bou$ (we adapt the Koszul sign rule here). Note that the associated total complex of the double complex by deleting the bottom row is exactly $\mc F_r^{\bullet\vee}[-r]$. Hence the diagram gives rise to a quasi-isomorphisms $\wedge^r\mbb L_{X/k}\to \mc F_r^{\bullet\vee}[-r]$.
\end{proof}

Before closing this section, let us compare Buchweitz and Flenner's formula \eqref{eq:generalized-HKR} ($Y=\mathrm{Spec}\,k$) and ours (i.e.\ $HH^i(X)\cong H^i(\mc H^\bullet)$) via the isomorphisms $\wedge^r\mbb L_{X/k}\to \mc F_r^{\bullet\vee}[-r]$.

Choose an injective resolution $0\to \mc O_X\to \mc I^\bullet$. Recall that $\mc{H}om(-, \mc I^\bullet)$, $-\ot \mc I^\bullet\colon \mathbf{C}(X)\to \mathbf{C}(X)$ are the total Hom functor and the total tensor functor respectively. Then $\mc{H}om(\mc F_q^{\bullet\vee}, \mc I^\bullet)\cong\mc F_q^{\bullet\vee\vee}\ot \mc I^\bullet\cong \mc F_q^{\bullet}\ot \mc I^\bullet$ is a complex of injective sheaves since $\mc F_q^{\bullet}$ is bounded and for each $i$, $\mc F_q^{i}$ is locally free of finite rank. There is a quasi-isomorphism $\mc F_q^{\bullet}\to \mc{H}om(\mc F_q^{\bullet\vee}, \mc I^\bullet)$. Thus
\begin{align*}
\Ext_X^p(\wedge^q_{\vphantom{X}}\mbb L_{X/k},\mc O_X)&=\Hom_{\DD(X)}(\wedge^q_{\vphantom{X}}\mbb L_{X/k},\mc O_X[p])\\
&\cong H^0(\Hom(\mc F_q^{\bullet\vee}[-q],\mc I^\bullet[p]))\\
&\cong H^{p+q}(\Gamma(X, \mc{H}om(\mc F_q^{\bullet\vee}, \mc I^\bullet)))\\
&= \mathbb{H}^{p+q}(X, \mc{H}om(\mc F_q^{\bullet\vee}, \mc I^\bullet))\\
&\cong\mathbb{H}^{p+q}(X, \mc F_q^{\bullet})
\end{align*}
where the hypercohomology $\mathbb{H}^{p+q}(X, \mc F_q^{\bullet})$ can also be computed by the (total) \v{C}ech complex (see e.g.\ \cite[Ch.\ 1]{Brylinski:loop-space}), namely, $\mathbb{H}^{p+q}(X, \mc F_q^{\bullet})\cong H^{p+q}(\mc H_q^\bullet)$. So
\[
\bigoplus_{p+q=i}\Ext_X^p(\wedge^q_{\vphantom{X}}\mbb L_{X/k},\mc O_X)\cong \bigoplus_{p+q=i}H^{p+q}(\mc H_q^\bullet)=H^i(\mc H^\bullet).
\]
Both results agree.

\subsection{Proof of the main theorem}\label{subsec:compute-cohomology}
Let us associate some graded modules to $X=\Proj S$. Note that the $\pl_{\bov}$ constitute a morphism
\[
\xymatrix{
\cdots \ar[r] & \mc K^{-3}(\bou;S) \ar[r]^-{\pl_{\bou}} & \mc K^{-2}(\bou;S) \ar[r]^-{\pl_{\bou}} & \mc K^{-1}(\bou;S) \ar[r]^-{\pl_{\bou}} & \mc K^{0}(\bou;S) \\
\cdots \ar[r] & \mc K^{-2}(\bou;S) \ar[r]^-{\pl_{\bou}}\ar[u]_-{\pl_{\bov}} & \mc K^{-1}(\bou;S) \ar[r]^-{\pl_{\bou}}\ar[u]_-{\pl_{\bov}} & \mc K^{0}(\bou;S) \ar[r] \ar[u]_-{\pl_{\bov}} & 0 \ar[u]
}
\]
from which we obtain the cokernel complex $\mc C^\bullet(\bou;S)$:
\begin{equation}\label{eq:quotient-complex}
\cdots\To \mc K^{-3}(\bou;S)/\im\pl_{\bov} \xrightarrow[\quad]{\pl_{\bou}} \mc K^{-2}(\bou;S)/\im\pl_{\bov} \xrightarrow[\quad]{\pl_{\bou}} \mc K^{-1}(\bou;S)/\im\pl_{\bov} \xrightarrow[\quad]{\pl_{\bou}} \mc K^{0}(\bou;S).
\end{equation}
The $i$-th cohomology group of $\mc C^\bullet(\bou;S)$ is denoted by $P^i$ and the $i$-th cocycle group by $Q^i$. Clearly, the $S$-modules $P^i$, $Q^i$ are graded modules. Denote by $Z^i$ the $i$-th cocycle group of $\mc K^\bullet(\bov;R)$, which is a graded $R$-module.

Recall that we have quasi-isomorphisms $\mc H^{\bullet}\to\mc G^{\bullet}\to\mc E^{\bullet}\to \bar{\mbf C}^{\prime \bullet}_{\GS}(\mc A)$. From now on, let us compute $H^\bullet_{\GS}(\mc A):=H^\bullet\bar{\mbf C}^{\prime\bullet}_{\GS}(\mc A)$ by using $\mc H^{\bullet,\bullet}$. We need some lemmas.

\begin{lemma}\label{lem:cohomo-1}
  The cohomology groups of $\mc K^\bullet(\bov^\star;S)$ are $H^0=H^0_0= k$, $H^{-1}=H^{-1}_{d-1}= k\bou^\star$ where $\bou^\star=(\pl F/\pl x_0, -\pl F/\pl x_1,\ldots, (-1)^n\pl F/\pl x_n)$, and $H^i=0$ for all $i\neq 0$, $-1$.
\end{lemma}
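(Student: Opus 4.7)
\emph{Proof plan.} The complex $\mc K^\bullet(\bov^\star;S)$ is isomorphic to the ordinary Koszul cochain complex $\mc K^\bullet(\bov;S)$ via the sign change $e_i\mapsto (-1)^i e_i$, so it is equivalent to compute $H^\bullet(\mc K^\bullet(\bov;S))$. The plan is to apply the exact functor $\mc K^\bullet(\bov;R)\otimes_R -$ (whose terms are $R$-free) to the short exact sequence
\begin{equation*}
0 \to R(-d) \xrightarrow{\cdot F} R \to S \to 0
\end{equation*}
of graded $R$-modules, obtaining a short exact sequence of complexes
\begin{equation*}
0 \to \mc K^\bullet(\bov;R)(-d) \xrightarrow{\cdot F} \mc K^\bullet(\bov;R) \to \mc K^\bullet(\bov;S) \to 0.
\end{equation*}
Because $\bov=(x_0,\ldots,x_n)$ is a regular sequence in $R$, the complex $\mc K^\bullet(\bov;R)$ is a free resolution of $k=R/(x_0,\ldots,x_n)$, so $H^0(\mc K^\bullet(\bov;R)) = k$ in internal degree $0$ and $H^i(\mc K^\bullet(\bov;R)) = 0$ for $i\neq 0$.

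Feeding these vanishings into the long exact sequence in cohomology collapses it to $H^i(\mc K^\bullet(\bov;S))=0$ for $i \notin \{-1,0\}$ together with a four-term exact sequence
\begin{equation*}
0 \to H^{-1}(\mc K^\bullet(\bov;S)) \to k(-d) \xrightarrow{\cdot F} k \to H^0(\mc K^\bullet(\bov;S)) \to 0.
\end{equation*}
Since $F$ lies in the maximal ideal $(x_0,\ldots,x_n)$, multiplication by $F$ acts as zero on $k$, yielding $H^0 \cong k$ and $H^{-1} \cong k(-d)$. To identify the generator of $H^{-1}$ with the claimed element $\bou^\star = \sum_i (-1)^i (\pl F/\pl x_i)\, e_i$, I would first verify it is a cocycle using Euler's identity $\sum_i x_i \, \pl F/\pl x_i = dF \equiv 0 \pmod F$, and then lift it to $\mc K^{-1}(\bov^\star;R)$: its differential there is exactly $dF$, so the connecting homomorphism sends $[\bou^\star]$ to the nonzero class $d \in k(-d)$ (nonzero because $\cha k = 0$), hence $[\bou^\star]$ generates $H^{-1}$.

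The only genuine subtlety is the grading convention. In the lemma the $e_i$ are implicitly assigned internal degree $0$, so $\pl_{\bov^\star}$ raises internal degree by one and $\mc K^{-p}(\bov^\star;S) = S^{\binom{n+1}{p}}$; this differs from the degree-preserving convention $|e_i|_{\mathrm{int}}=1$ (giving $\mc K^{-p} = S(-p)^{\binom{n+1}{p}}$) used in the long exact sequence computation above by an internal-degree shift of $p$. Translating the output $H^{-1}\cong k(-d)$ through this shift places the class in internal degree $d-1$, matching the claimed $H^{-1}_{d-1}$. Apart from keeping this bookkeeping straight, the argument is a routine application of the long exact sequence associated to multiplication by $F$.
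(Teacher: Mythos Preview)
Your argument is correct and takes a genuinely different route from the paper's. The paper peels off the variable $x_0$: it observes that the subsequence $\bov^\star_0=(-x_1,x_2,\ldots,(-1)^n x_n)$ is regular in $S$ (since $F$ has $x_0^d$ as leading term, $S/(x_1,\ldots,x_n)\cong k[x_0]/(x_0^d)$ is Artinian and $S$ is Cohen--Macaulay), so $\mc K^\bullet(\bov^\star_0;S)$ is acyclic except in top degree, and $\mc K^\bullet(\bov^\star;S)$ is the cone of multiplication by $x_0$ on that complex. This reduces the computation to $k[x_0]/(x_0^d)\xrightarrow{x_0}k[x_0]/(x_0^d)$. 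The generator is then pinned down by the direct observation that $\bou^\star$ cannot lie in $\im\pl_{\bov^\star}$ because its $0$-th component contains the monomial $dx_0^{d-1}$.

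Your approach instead exploits the presentation $0\to R(-d)\xrightarrow{F} R\to S\to 0$ and the regularity of $\bov$ in $R$, which is perhaps more systematic and has the pleasant feature that the connecting homomorphism, via Euler's identity, directly hands you the generator of $H^{-1}$ without a separate check. Both arguments are short; yours uses regularity in $R$ (automatic), while the paper uses regularity of a subsequence in $S$ (which relies on the standing hypothesis that $F$ contains $x_0^d$). Your grading discussion at the end is slightly convoluted---it would be cleaner to simply declare $|e_i|_{\mathrm{int}}=1$ from the start so the Koszul differentials are degree-preserving, and then read off that the generator of $H^{-1}\cong k(1-d)$ sits in internal degree $d-1$---but the conclusion is right.
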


\begin{proof}
  Recall Remark \ref{rmk:mixed-v} and note that $\bov^\star_0=(-x_1,x_2,\ldots,(-1)^nx_n)$ is a regular sequence in $S$. So $\mc K^\bullet(\bov^\star;S)$, which is the mapping cone of $\mc K^\bullet(\bov^\star_0;S)\xrightarrow{x_0}\mc K^\bullet(\bov^\star_0;S)$, is quasi-isomorphic to
  \[
  \cdots\To 0\To S/(\bov^\star_0)\xrightarrow[\quad]{x_0}S/(\bov^\star_0) \To 0.
  \]
  Since $S/(\bov^\star_0)\cong k[x_0]/(x_0^d)$, we have $H^0=k$ and $H^{-1}\cong k(1-d)$ as graded modules. To show $\bou^\star$ is a base element in $H^{-1}$, we will check that $\bou^\star$ never belongs to $\im\pl_{\bov^\star}$. This is clear since $\pl F/\pl x_0$ contains $dx^{d-1}_0$ as a summand.
\end{proof}

The following is well known:
\begin{lemma}\label{lem:cohomo-2}
  The cohomology groups of $\mc K^\bullet(\bov; R)$ are $H^0=H^0_0= k$, and $H^i=0$ for all $i\neq 0$.
\end{lemma}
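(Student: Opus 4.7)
The plan is to recognize this as the classical Koszul resolution. The sequence $\bov=(x_0,x_1,\ldots,x_n)$ is a regular sequence of degree-one homogeneous elements in the polynomial ring $R=k[x_0,\ldots,x_n]$: for each $i$, the element $x_{i+1}$ is a non-zero-divisor on the integral domain $R/(x_0,\ldots,x_i)\cong k[x_{i+1},\ldots,x_n]$. By the standard theorem that the Koszul complex of a regular sequence is a free resolution of the corresponding quotient, $\mc K^\bullet(\bov;R)$ has vanishing cohomology in all cohomological degrees $\neq 0$, and its zeroth cohomology equals $R/(x_0,\ldots,x_n)\cong k$.

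For the refinement $H^0=H^0_0$, note that the Koszul complex carries an internal grading inherited from $R$ by placing each generator $e_i$ in internal degree $1$, so that the differential $\pl_{\bov}(e_i)=x_i$ is homogeneous of degree $1$. Under this grading the quotient $R/(x_0,\ldots,x_n)\cong k$ sits entirely in internal degree $0$, giving $H^0=H^0_0=k$.

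If a self-contained derivation is preferred, one can imitate the mapping-cone induction already used in the proof of Lemma~\ref{lem:cohomo-1}: write $\mc K^\bullet(\bov;R)$ as the mapping cone of multiplication by $x_n$ on $\mc K^\bullet((x_0,\ldots,x_{n-1});R)$, apply the inductive hypothesis (which yields cohomology $k[x_n]$ concentrated in cohomological degree $0$ and internal degree $0$ modulo the $k[x_n]$-action), and use that $x_n$ is a non-zero-divisor on this quotient. The main obstacle is purely bookkeeping, namely reconciling the sign/shift conventions of \S\ref{sec:mixedcomp} with those of the classical Koszul setup; no substantive homological input beyond the regular-sequence theorem is required.
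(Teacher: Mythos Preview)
Your proof is correct. The paper does not actually prove this lemma at all---it simply introduces it with ``The following is well known:'' and states it without proof---so your argument via the regularity of $(x_0,\ldots,x_n)$ in $R$ and the standard Koszul-resolution theorem is exactly the intended justification, and your remark about the mapping-cone induction mirrors the method the paper uses for the neighboring Lemma~\ref{lem:cohomo-1}.
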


\begin{lemma}\label{lem:cohomo-3}
  Let $\tau^{r}_0$ be the zeroth graded component of $\tau^{r}\mc K^{\bullet,\bullet}(\bou,\bov;S)$.
  \begin{enumerate}
    \item If $0\leq r\leq n$, then
     \[
     H^i(\Tot\tau^{r}_0)\cong
      \begin{cases}
        0, & 0\leq i< r,\\
        Q^{-r}_{r}, &  i=r,\\
        P^{i-2r}_{r+(i-r)(d-1)}, & r< i\leq 2r.
      \end{cases}
     \]
    \item If $r\geq n+1$ and $d=n+1$, then
     \[
     H^i(\Tot\tau^{r}_0)\cong
      \begin{cases}
        0, & 0\leq i\leq r,\, i\neq n,\\
        k, & i=n,\\
        P^{i-2r}_{r+(i-r)(d-1)}, & r< i\leq 2r.
      \end{cases}
      \]
     \item If $r\geq n+1$ and $d\neq n+1$, then
     \[
      H^i(\Tot\tau^{r}_0)\cong
           \begin{cases}
             0, & 0\leq i\leq r,\\
             P^{i-2r}_{r+(i-r)(d-1)}, & r< i\leq 2r.
           \end{cases}
      \]
  \end{enumerate}
\end{lemma}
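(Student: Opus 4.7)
The plan is to compute $H^i(\Tot\tau^r_0)$ using the spectral sequence of $\tau^r_0$ obtained by filtering by columns, so that $E_1^{p,q}$ is the $\pl_\bov$-cohomology of column $p$ at vertical position $q$. The key input is Lemma \ref{lem:cohomo-1} combined with Remark \ref{rmk:mixed-v}: the identification $(\mc K^\bullet(\bou;S),\pl_\bov)\cong\mc K^\bullet(\bov^\star;S)$ reverses the cohomological grading (position $-m$ corresponds to position $m-(n+1)$), so the cohomology of $(\mc K^\bullet(\bou;S),\pl_\bov)$ at position $-m$ is $k$ precisely at $(m,\text{degree})=(n+1,0)$ and at $(m,\text{degree})=(n,d-1)$, and $0$ otherwise.

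First I would identify each column. The entry of $\tau^r_0$ at $(p,q)$ equals $S_{pd+m}^{\binom{n+1}{m}}$ with $m=q-p$; since $\pl_\bov$ raises both the coefficient degree and $m$ by one, the locus where $\text{degree}-m$ is constant is preserved, so column $p$ is the truncation at $m\leq r-p$ of the diagonal $\text{degree}-m=pd$ of $(\mc K^\bullet(\bou;S),\pl_\bov)$. Intersecting this diagonal with the two nonzero cohomology loci rules out every interior contribution except the single case $p=0,\,d=n+1,\,m=n$, producing one $k$ at position $(0,n)$; the bottom $m=0$ always contributes $0$. At the truncation top $m=r-p$ (when $r-p\leq n$) the cohomology is the cokernel of the incoming $\pl_\bov$, namely $\mc C^{-(r-p)}_{r+p(d-1)}$. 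When $r-p=n+1$ this cokernel equals $(S/(\bov))_{r+p(d-1)}=0$, and when $r-p>n+1$ the column extends into zero entries and contributes nothing. Consequently the nonzero $E_1^{p,q}$ sit on the row $q=r$ for $\max(0,r-n)\leq p\leq r$, together with the isolated class at $(0,n)$ in case (2).

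On the row $q=r$ the differential $d_1=\pl_\bou$ identifies $E_1^{\bullet,r}$ with the diagonal $\{\text{degree}+m(d-1)=rd\}$ of $\mc C^\bullet(\bou;S)$; its cohomology gives $P^{i-2r}_{r+(i-r)(d-1)}$ at interior and right-most positions. At the left-most position one obtains a cocycle group: in case (1) this is $Q^{-r}_r$ by definition, and in cases (2)--(3) the left-most slot sits at $p=r-n$, where the incoming $\pl_\bou$ in $\mc C^\bullet$ would come from $\mc C^{-(n+1)}_{(r-n-1)d+n+1}=0$, so this cocycle group agrees with $P^{-n}_{(r-n)d+n}$. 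Every higher differential $d_s$ ($s\geq 2$) maps out of row $q=r$ (or the isolated $(0,n)$) into a row $q<r$, $q\neq n$, where $E_2$ vanishes; hence $E_\infty=E_2$, and summing $E_\infty^{p,q}$ over $p+q=i$ produces the three cases. The chief difficulty will be the first step, namely the grading bookkeeping needed to translate Lemma \ref{lem:cohomo-1} through the reversal of Remark \ref{rmk:mixed-v} and to pin down the single exceptional pair $(p,d)=(0,n+1)$ responsible for the isolated $k$ in case (2).
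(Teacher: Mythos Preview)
Your argument is correct and follows essentially the same route as the paper's proof: both compute the column-filtration spectral sequence of $\tau^r_0$, use Lemma~\ref{lem:cohomo-1} (via the reversal of Remark~\ref{rmk:mixed-v}) to show the columns are exact except at the top row $q=r$ and at the isolated spot $(0,n)$ when $d=n+1$, identify the $E_1$-row $q=r$ with a graded strand of $\mc C^\bullet(\bou;S)$, and observe that the sequence degenerates at $E_2$. The only cosmetic difference is that the paper writes $E_1^{p,r}=\mc C^{-(r-p)}_{r+p(d-1)}$ uniformly for all $p$ and then argues $Q^{-r}_r=0$ when $r\geq n+1$, whereas you cut off the range of $p$ explicitly and instead check that $Q^{-n}$ agrees with $P^{-n}$ in the relevant degree; both bookkeeping choices lead to the same $E_2$.
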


\begin{proof}
  We prove the statements by computing the spectral sequence ${}^IE^{p,q}_a$ determined by $\tau^{r}_0$.

  (1) Let $0\leq r\leq n$. The $p$-th column of $\tau^{r}\mc K^{\bullet,\bullet}(\bou,\bov;S)$ is the truncation  $\tau^{\leq -(n+1-r+p)}\mc K^\bullet(\bov^\star;S)$ up to twist. Notice that $-(n+1-r+p)\leq -1$. By Lemma \ref{lem:cohomo-1}, $H^i(\tau^{\leq -(n+1-r+p)}\mc K^\bullet(\bov^\star;S))=0$ if $i\neq -(n+1-r+p)$. It follows that the $p$-th column of $\tau^{r}\mc K^{\bullet,\bullet}(\bou,\bov;S)$ is exact except in spot $(p,r)$. By considering the zeroth graded component, we have ${}^IE^{p,q}_1=0$ if $q\neq r$, and
  \[
  {}^IE^{p,r}_1=\bigl(S(r+p(d-1))^{\binom{n+1}{r-p}}/\im\pl_{\bov}\bigr)_0 =\bigl(\mc K^{-(r-p)}(\bou;S)/\im\pl_{\bov}\bigr)_{r+p(d-1)}.
  \]
  To compute ${}^IE^{p,r}_2$, it suffices to consider the complex
  \[
  \bigl(\mc K^{-r}(\bou;S)/\im\pl_{\bov}\bigr)_{r} \To \cdots \To \bigl(\mc K^{-1}(\bou;S)/\im\pl_{\bov}\bigr)_{r+(r-1)(d-1)}\xrightarrow[\quad]{\pl_{\bou}} \bigl(\mc K^{0}(\bou;S)\bigr)_{rd}.
  \]
  Comparing this complex with \eqref{eq:quotient-complex}, we have ${}^IE^{0,r}_2=Q^{-r}_r$, ${}^IE^{p,r}_2=P^{-(r-p)}_{r+p(d-1)}$ if $p\geq 1$. Hence $H^i(\Tot\tau^r_0)\cong{}^IE^{i-r,r}_{\infty}={}^IE^{i-r,r}_2=P^{i-2r}_{r+(i-r)(d-1)}$ when $r< i\leq 2r$, and $H^r(\Tot\tau^r_0)\cong{}^IE^{0,r}_{\infty}={}^IE^{0,r}_2=Q^{-r}_{r}$.

  (2) Let $r\geq n+1$ and $d=n+1$. Just like in the situation in (1), we have
  \[
    {}^IE^{p,r}_1=\bigl(\mc K^{-(r-p)}(\bou;S)/\im\pl_{\bov}\bigr)_{r+p(d-1)}.
  \]
  By Lemma \ref{lem:cohomo-1} and taking into consideration the degrees, we find one more nonzero ${}^IE^{p,q}_1$, namely, ${}^IE^{0,n}_1\cong k$. For ${}^IE^{p,q}_2$, as shown in (1), ${}^IE^{0,r}_2=Q_r^{-r}$ and ${}^IE^{p,r}_2=P^{-(r-p)}_{r+p(d-1)}$ for all $1\leq p\leq 2r$. Note that $Q^{-(n+1)}_{n+1}$ is a $k$-submodule of $(S/\im\pl_{\bov})_{n+1}=k_{n+1}=0$ and $Q^{-s}=0$ if $s\geq n+2$. Hence ${}^IE^{0,r}_2=Q_r^{-r}=0$ since $r\geq n+1$. We also have ${}^IE^{0,n}_2={}^IE^{0,n}_1\cong k$ and the rest ${}^IE^{p,q}_2$ being all zero. Assertion (2) follows.

  (3) The proof is completely similar to (2). The only difference is that ${}^IE^{0,n}_1$ is zero since $d\neq n+1$.
\end{proof}

The double complex $\mc H_r^{\bullet,\bullet}$ leads to a spectral sequence ${}^{I\!I}E^{p,q}_{r,a}$ by filtration by rows. We begin to calculate it.

\subsubsection{Case 1: $d>n+1$}

Suppose $m\geq 0$ and $\mc O=\mc O_{\mbb{P}^n}$. By the exact sequence
\[
0\To\mc O(m-d)\xrightarrow[\quad]{F}\mc O(m)\To\mc O_X(m)\To 0,
\]
we immediately conclude that $H^i(X,\mc O_X(m))=0$ if $i\neq 0$, $n-1$, and $H^{n-1}(X,\mc O_X(m))\cong H^{n}(\mbb{P}^n,\mc O(m-d))\cong H^0(\mbb{P}^n, \mc O(d-n-1-m))^*$. Obviously, $H^0(\mbb{P}^n, \mc O(d-n-1-m))$ has a basis
\[
\{x_0^{i_0}x_1^{i_1}\cdots x_n^{i_n}\in R \mid i_0+i_1+\cdots+ i_n=d-n-1-m,\, i_0,i_1,\ldots,i_n\geq 0\}.
\]
On the other hand, the \v{C}ech cohomology group $\check{H}^{n-1}(\mf U,\mc O_X(m))$ has a basis
\[
\{x_0^{j_0}x_1^{j_1}\cdots x_n^{j_n}\in S_{x_1\cdots x_n} \mid j_0+j_1+\cdots+ j_n=m,\, 0\leq j_0\leq d-1,\, j_1,\ldots,j_n\leq -1\}
\]
where $S_{x_1\cdots x_n}$ is the localization of $S$ at $x_1\cdots x_n$. Since both groups have finite dimension over $k$, the duality gives rise to the bijection
\begin{align}
\scr S\colon H^0(\mbb{P}^n, \mc O(d-n-1-m))&\To \check{H}^{n-1}(\mf U,\mc O_X(m)),\label{eq:scriptS}\\
x_0^{i_0}x_1^{i_1}\cdots x_n^{i_n}&\longmapsto x_0^{d-1-i_0}x_1^{-1-i_1}\cdots x_n^{-1-i_n}.\notag
\end{align}
The map $\scr S$ induces $H^0(\mbb{P}^n, \mc O(d-n-1-m)^r)\To \check{H}^{n-1}(\mf U,\mc O_X(m)^r)$ for any $r\in\nan$ which is also denoted by $\scr S$.

Since $\check{H}^{n-1}(\mf U,\mc O_X(m))=0$ if $m\geq d$, by the definition of $\mc H_r^{\bullet,\bullet}$, we have
\[
{}^{I\!I}E^{p,q}_{r,1}=\check{H}^q(\mf U,\mc F_r^p)=
\begin{cases}
\check{H}^{n-1}\bigl(\mf U,\mc O_X(p)^{\binom{n+1}{p}}\bigr), & 0\leq p\leq r,\, q=n-1,\\
(\Tot\tau_0^r)^p, & 0\leq p\leq 2r,\, q=0,\\
0, & \text{otherwise.}
\end{cases}
\]
Since $\check{H}^{n-1}\bigl(\mf U,\mc O_X(p)^{\binom{n+1}{p}}\bigr)= \bigl(R_{d-n-1-p}^{\binom{n+1}{p}}\bigr)^*=\mc K^{-p}(\bov;R)_{d-n-1-p}^*$, the complex
\[
{}^{I\!I}E^{0,n-1}_{r,1}\To {}^{I\!I}E^{1,n-1}_{r,1}\To \cdots \To {}^{I\!I}E^{r-1,n-1}_{r,1}\To {}^{I\!I}E^{r,n-1}_{r,1}
\]
is dual to
\begin{equation}\label{eq:complex-R}
\mc K^{0}(\bov;R)_{d-n-1}\longleftarrow\mc K^{-1}(\bov;R)_{d-n-2} \longleftarrow\cdots\longleftarrow \mc K^{-r+1}(\bov;R)_{d-n-r}\longleftarrow\mc K^{-r}(\bov;R)_{d-n-1-r}.
\end{equation}
By Lemma \ref{lem:cohomo-2}, the only non trivial cohomology of the complex $\mc K^\bullet(\bov;R)$ is $H^0(\mc K^\bullet(\bov;R))= k$. The zero-th cohomology group of \eqref{eq:complex-R} is zero since the $(d-n-1)$-st graded component in $k$ is zero.  The unique possible nonzero cohomology of \eqref{eq:complex-R} is $H^{-r}=Z^{-r}_{d-n-1-r}$, yielding ${}^{I\!I}E^{r,n-1}_{r,2}=\scr S(Z^{-r}_{d-n-1-r})$. Combining this with Lemma \ref{lem:cohomo-3}, we obtain that ${}^{I\!I}E^{p,q}_{r,2}$ is given by
\[
{}^{I\!I}E^{p,q}_{r,2}=
\begin{cases}
\scr S(Z^{-r}_{d-n-1-r}), & p= r,\, q=n-1,\\
P^{p-2r}_{r+(p-r)(d-1)}, & r< p\leq 2r,\, q=0,\\
Q^{-r}_{r}, & p= r,\, q=0,\\
0, & \text{otherwise.}
\end{cases}
\]
Immediately, ${}^{I\!I}E^{p,q}_{r,n}=\cdots ={}^{I\!I}E^{p,q}_{r,2}$. On one hand, ${}^{I\!I}E^{r+n,0}_{r,n}=0$ when $r\leq n-1$, since $r+n>2r$; on the other hand, in case $r\geq n$, we have $d-n-1-r\leq -1$ and so ${}^{I\!I}E^{r,n-1}_{r,n}=0$ since $R$ has only non-negative grading.  So in order to show ${}^{I\!I}E^{p,q}_{r,n+1}={}^{I\!I}E^{p,q}_{r,n}$ for any pair $(r,n)$, it is sufficient to prove the differential ${}^{I\!I}E^{n,n-1}_{n,n}\to {}^{I\!I}E^{2n,0}_{n,n}$ (i.e.\ the case $r=n$) is zero.

Since ${}^{I\!I}E^{n,n-1}_{n,n}$ is a sub-quotient of $\check{\mbf C}^{\prime n-1}(\mf U, \mc F_n^n)$, we choose a cocycle $c^{n-1,n}\in \check{\mbf C}^{\prime n-1}(\mf U, \mc F_n^n)$ for any class in ${}^{I\!I}E^{n,n-1}_{n,n}$. Performing a diagram chase, a cochain $(c^{0,2n-1},c^{1,2n-2},\ldots, c^{n-1,n})$ in $ \mc H^{\bullet}$ can be given. Notice that $c^{0,2n-1}\in \mc H^{0,2n-1}=\check{\mbf C}^{\prime 0}(\mf U, \mc F^{2n-1}_n)=\mc K^{-1}(\bou;S)_{nd-d+1}$, and so $d_{v,\mc H}(c^{0,2n-1})=\pl_{\bou}(c^{0,2n-1})$ is a coboudary in $\mc K^{0}(\bou;S)_{nd}$, i.e.\ $d_{v,\mc H}(c^{0,2n-1})$ represents the zero class in $P^0_{nd}={}^{I\!I}E^{2n,0}_{n,n}$. It follows that the differential ${}^{I\!I}E^{n,n-1}_{n,n}\to {}^{I\!I}E^{2n,0}_{n,n}$ is a zero map.  Therefore, ${}^{I\!I}E^{p,q}_{r,\infty}={}^{I\!I}E^{p,q}_{r,2}$, and
\[
H^i(\mc H^{\bullet})\cong\bigoplus_{r\in\nan}\bigoplus_{p+q=i}{}^{I\!I}E^{p,q}_{r,\infty}
= \bigoplus_{r<i}P^{i-2r}_{r+(i-r)(d-1)}\oplus Q^{-i}_{i}\oplus \scr S(Z^{-i+n-1}_{d-i-2}).
\]

\subsubsection{Case 2: $d=n+1$}
The formula
\[
{}^{I\!I}E^{p,q}_{r,1}=
\begin{cases}
\check{H}^{n-1}\bigl(\mf U,\mc O_X(p)^{\binom{n+1}{p}}\bigr), & 0\leq p\leq r,\, q=n-1,\\
(\Tot\tau_0^r)^p, & 0\leq p\leq 2r,\, q=0,\\
0, & \text{otherwise.}
\end{cases}
\]
remains valid in this case. Note that the complex \eqref{eq:complex-R} has only one nonzero term $\mc K^0(\bov;R)_{d-n-1}=R_0= k$. By applying Lemma \ref{lem:cohomo-3} again, we conclude that for $0\leq r\leq n$,
\[
{}^{I\!I}E^{p,q}_{r,2}=
\begin{cases}
k, & p= 0,\, q=n-1,\\
P^{p-2r}_{r+n(p-r)}, & r< p\leq 2r,\, q=0,\\
Q^{-r}_{r}, & p= r,\, q=0,\\
0, & \text{otherwise,}
\end{cases}
\]
and for $r\geq n+1$,
\[
{}^{I\!I}E^{p,q}_{r,2}=
\begin{cases}
k, & p= 0,\, q=n-1,\\
k, & p= n,\, q=0,\\
P^{p-2r}_{r+n(p-r)}, & r< p\leq 2r,\, q=0,\\
0. & \text{otherwise.}
\end{cases}
\]
It follows that ${}^{I\!I}E^{p,q}_{r,n}=\cdots ={}^{I\!I}E^{p,q}_{r,2}$.

Since for any $V_{i_1\ldots i_s}\in \mf V$, the algebra $A_{i_1\ldots i_s}=\mc O_X(V_{i_1\ldots i_s})$ is identified with the zero-th graded component of $S_{x_{i_1}\cdots x_{i_s}}$, the localization of $S$ with respect to the element $x_{i_1}\cdots x_{i_s}$, we conclude that the \v{C}ech complex $\check{\mbf C}^{\prime\bullet}(\mf U, \mc F_r^0)$ for any $r$ is the sub-complex of
\[
\prod_{i_1} S_{x_{i_1}}\To \prod_{i_1<i_2} S_{x_{i_1}x_{i_2}}\To\cdots\To \prod_{i_1<\cdots<i_{n-1}} S_{x_{i_1}\cdots x_{i_{n-1}}} \To S_{x_{1}\cdots x_{n}}
\]
consisting of all cochains of degree zero. Since ${}^{I\!I}E^{0,n-1}_{r,n}$ is a sub-quotient of $\check{\mbf C}^{\prime n-1}(\mf U, \mc F_r^0)$, it seems apt to choose $x_0^nx_1^{-1}\cdots x_n^{-1}\in \check{\mbf C}^{\prime n-1}(\mf U, \mc F_r^0)$ as a base element of ${}^{I\!I}E^{0,n-1}_{r,n}$. However, for the sake of easy computation, we use $x_1^{-1}\cdots x_n^{-1}\cdot\pl F/\pl x_0$ flexibly rather than $x_0^nx_1^{-1}\cdots x_n^{-1}$. Similar to the argument in the case $d> n+1$, one finds a cochain $(c^{0,n-1},c^{1,n-2},\ldots, c^{n-1,0})$ in $\mc H^{\bullet}$ with $c^{n-1,0}=x_1^{-1}\cdots x_n^{-1}\cdot\pl F/\pl x_0$. The differential ${}^{I\!I}E^{0,n}_{r,n-1}\to {}^{I\!I}E^{n,0}_{r,n}$ sends the class represented by $c^{n-1,0}$ to the one represented by $d_{v,\mc H}(c^{0,n-1})$.
\begin{enumerate}
\item If $0\leq r\leq n-1$, $d_{v,\mc H}(c^{0,n-1})$ belongs to $P^{n-2r}_{r+n(n-r)}$. Recall the shape and size of the triangle $\tau^r\mc K^{\bullet,\bullet}(\bou,\bov;S)$. The element $d_{v,\mc H}(c^{0,n-1})$ is zero itself if $r$ is very small, or is a sum $\pl_{\bou}(?)+\pl_{\bov}(?')$ if $r$ is larger. According to the construction of \eqref{eq:complex-R}, $\pl_{\bou}(?)+\pl_{\bov}(?')$ necessarily represents the zero class. In both cases, $c^{n-1,0}$ is killed by the differential ${}^{I\!I}E^{0,n}_{r,n-1}\to {}^{I\!I}E^{n,0}_{r,n}$.
\item If $r=n$, the diagram chase shows $d_{v,\mc H}(c^{0,n-1})=\bou^\star+\im\pl_{\bov}\in Q^{-n}_{n}=\ker\{S^{n+1}_{n}/\im\pl_{\bov}\to S^{n(n+1)/2}_{2n}\im\pl_{\bov}\}$. By the definition of $\mc C^\bullet(\bou;S)$, $\bou^\star+\im\pl_{\bov}$ happens to be a base element of $\im\{S_0/\im\pl_{\bov}\to S^{n+1}_{n}/\im\pl_{\bov}\}$. So ${}^{I\!I}E^{0,n}_{r,n-1}=k\to {}^{I\!I}E^{n,0}_{r,n}=Q^{-n}_{n}$ is injective and its cokernel is given by $Q_n^{-n}/(k\bou^\star+\im\pl_{\bov})=P_n^{-n}$.
\item If $r\geq n+1$, we claim that the differential ${}^{I\!I}E^{0,n}_{r,n-1}=k\to {}^{I\!I}E^{n,0}_{r,n}=k$ is an isomorphism. The assertion follows from Lemma \ref{lem:k-to-k} which will be proven later on.
\end{enumerate}

Summarizing, the spectral sequence
\begin{align*}
{}^{I\!I}E^{p,q}_{r,\infty}&={}^{I\!I}E^{p,q}_{r,n+1}=
\begin{cases}
k, & p= 0,\, q=n-1,\\
P^{p-2r}_{r+n(p-r)}, & r< p\leq 2r,\, q=0,\\
Q^{-r}_{r}, & p= r,\, q=0,\\
0, & \text{otherwise,}
\end{cases}
&&\text{if $0\leq r\leq n-1$,}\\
{}^{I\!I}E^{p,q}_{r,\infty}&={}^{I\!I}E^{p,q}_{r,n+1}=
\begin{cases}
P^{p-2r}_{r+n(p-r)}, & r\leq p\leq 2r,\, q=0,\\
0, & \text{otherwise,}
\end{cases}
&&\text{if $r=n$,}\\
{}^{I\!I}E^{p,q}_{r,\infty}&={}^{I\!I}E^{p,q}_{r,n+1}=
\begin{cases}
P^{p-2r}_{r+n(p-r)}, & r< p\leq 2r,\, q=0,\\
0, & \text{otherwise,}
\end{cases}
&&\text{if $r\geq n+1$.}
\end{align*}
Therefore,
\[
H^i(\mc H^{\bullet})\cong
\begin{cases}
\displaystyle \bigoplus_{r<i}P^{i-2r}_{r+n(i-r)}\oplus Q^{-i}_{i}, & i\neq n-1, n,\\
\displaystyle \bigoplus_{r<i}P^{i-2r}_{r+n(i-r)}\oplus Q^{-i}_{i}\oplus k^n, & i=n-1,\\
\displaystyle \bigoplus_{r\leq i}P^{i-2r}_{r+n(i-r)}, & i= n.
\end{cases}
\]

Note that $\mc F_r^q$ is a direct sum of some terms as given in Figure \ref{fig:truncation-S}, and hence $\mc H_r^{p,q}$ admits a decomposition
\[
\check{\mbf C}^{\prime p}(\mf U, \mc O_X(q)^{\binom{n+1}{q}})\oplus \check{\mbf C}^{\prime p}(\mf U, \mc O_X(q+d-2)^{\binom{n+1}{q-2}})\oplus \check{\mbf C}^{\prime p}(\mf U, \mc O_X(q+2d-4)^{\binom{n+1}{q-4}})\oplus \cdots
\]
when $q\leq r$. Intuitively, $\mc O_X(q)^{\binom{n+1}{q}}$ appearing in the first component corresponds to a graded module located at the leftmost edge in Figure \ref{fig:truncation-S}. We hence call a cochain in $\mc H_r^{p,q}$ \emph{left preferred} if it has possible nonzero component only in $\check{\mbf C}^{\prime p}(\mf U, \mc O_X(q)^{\binom{n+1}{q}})$.

\begin{lemma}\label{lem:k-to-k}
  Suppose $d=n+1$ and $r\geq n$. There exists a cochain $(c^{0,n-1},c^{1,n-2},\ldots,c^{n-1,0})$ in $\mc H^{n-1}_r$ such that each $c^{n-1-q,q}$ is left preferred in $\mc H_r^{n-1-q,q}$ and
  \[
  c^{n-1,0}=x_1^{-1}\cdots x_n^{-1}\frac{\pl F}{\pl x_0},\quad d_{\mc F}(c^{0,n-1},c^{1,n-2},\ldots,c^{n-1,0})=((-1)^{n-1}\bou^\star,0,\ldots,0).
  \]
\end{lemma}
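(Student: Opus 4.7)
The plan is to construct the cochain components $c^{n-1-q,q}$ inductively on $q = 0, 1, \ldots, n-1$, starting from the given $c^{n-1,0} = x_1^{-1}\cdots x_n^{-1}\,\pl F/\pl x_0$ on $V_{1\ldots n}$. Since each $c^{n-1-q,q}$ is required to take values in the left-preferred summand $\mc O_X(q)^{\binom{n+1}{q}}$, the vertical differential $d_v c^{n-1-q,q}$ decomposes into a left-preferred part $\pl_{\bov}c^{n-1-q,q}\in\mc O_X(q+1)^{\binom{n+1}{q+1}}$ and (for $q \geq 1$) a non-left-preferred part $\pl_{\bou}c^{n-1-q,q}\in\mc O_X(q+d-1)^{\binom{n+1}{q-1}}$. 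The \v{C}ech coboundary of a left-preferred cochain remains left preferred, so the desired identity $d_h c^{n-2-q,q+1}\pm d_v c^{n-1-q,q}=0$ forces $\pl_{\bou}c^{n-1-q,q}=0$ at every \v{C}ech simplex, which I would carry along as part of the induction hypothesis, reducing the step to finding a left-preferred \v{C}ech primitive for $\pl_{\bov}c^{n-1-q,q}$.

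The construction of this primitive is driven by Euler's identity $\sum_{i=0}^n x_i\,\pl F/\pl x_i = dF \equiv 0$ in $S$, which is precisely the orthogonality relation of the $n$-POS $(\bou,\bov)$ from Section \ref{sec:mixedcomp}. Whenever a local section of $\pl_{\bov}c^{n-1-q,q}$ on the deepest simplex $V_{1\ldots n}$ has a ``problematic'' component involving a factor $x_0$ that prevents its extension to a shallower simplex $V_{1\ldots\hat{j}\ldots n}$, I would invoke $x_0\,\pl F/\pl x_0=-\sum_{i\geq 1}x_i\,\pl F/\pl x_i$ to split this component into a sum whose individual summands each extend. Redistributing these summands across the simplices $V_{1\ldots\hat{j}\ldots n}$ yields the \v{C}ech primitive $c^{n-2-q,q+1}$. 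The preservation of the vanishing $\pl_{\bou}c^{n-2-q,q+1}=0$ then follows by a direct antisymmetric cancellation of the paired products $\pl F/\pl x_a\cdot\pl F/\pl x_b$ in the commutative ring, which is the cochain-level shadow of the anticommutation $\pl_{\bou}\pl_{\bov}+\pl_{\bov}\pl_{\bou}=0$ proved in Section \ref{sec:mixedcomp}.

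At the terminal step $q=n-1$, the cochain $c^{0,n-1}$ is a global \v{C}ech $0$-cochain with values in $\mc O_X(n-1)^{\binom{n+1}{n-1}}$, and a direct computation of $\pl_{\bov}c^{0,n-1}$ from the unrolled recursion produces $(-1)^{n-1}\bou^\star$ in the left-preferred summand $\mc O_X(n)^{n+1}$, the sign coming from the Koszul reordering of wedges at the top exterior power. The hypotheses $d=n+1$ and $r\geq n$ enter in ensuring that the truncated triangle $\tau^r\mc K^{\bullet,\bullet}(\bou,\bov;S)$ is sufficiently large for the left-preferred diagonal to reach position $(0,n)$, so the iteration closes at the top without collapsing. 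The principal difficulty I expect is the combinatorial bookkeeping of signs and coefficients across $n-1$ nested Euler substitutions, together with the verification that the non-left-preferred vanishing $\pl_{\bou}c^{n-1-q,q}=0$ is preserved at every stage of the induction, not only in the initial case $q=1$ where it can be checked by direct pairwise cancellation.
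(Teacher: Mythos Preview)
Your proposal is correct and follows essentially the same route as the paper: an inductive construction of left-preferred \v{C}ech cochains driven by Euler's identity, maintaining $\pl_{\bou}c^{n-1-q,q}=0$ at each step, and terminating with $\pl_{\bov}c^{0,n-1}$ equal to the restriction of $(-1)^{n-1}\bou^\star$ on each $U_{i_1}$. The paper executes exactly this plan with explicit formulas, arriving at the closed form
\[
c^{s-1,n-s}_{i_1,\ldots,i_s}=(-1)^{\wp(\imath)+n-s}\,\underline{x_{i_1}^{-1}\cdots x_{i_s}^{-1}}\sum_{m=0}^{n-s}(-1)^m\frac{\pl F}{\pl x_{j_m}}\,\mf f_{j_0\ldots \widehat{j_m}\ldots j_{n-s}},
\]
which is the concrete resolution of the combinatorial bookkeeping you flag as the main difficulty.
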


\begin{proof}
  During the proof, we will frequently meet elements in $S_{x_{i_1}\cdots x_{i_m}}$. To avoid confusion, we underline denominators to distinguish between similar looking elements. For example, $\underline{x_1^{-1}}\in S_{x_1}$, $\underline{x_1^{-1}x_2^0}\in S_{x_1x_2}$, $\underline{x_1^{-1}x_2^0x_3^0}\in S_{x_1x_2x_3}$. The notations $\mf f_{j_1\ldots j_s}$ stand for formal bases elements.

  When the \v{C}ech indices $(i_1,\ldots, i_s)$ appear, the complements are denote by $(j_1,\ldots,j_{n-s})$, namely, the latter are obtained by deleting $i_1,\ldots, i_s$ from $(1,2,\ldots, n)$. The permutation
  \[
  \begin{pmatrix}
    1 & \ldots & s & s+1 & \ldots & n \\
    i_1 & \ldots & i_s & j_1 & \ldots & j_{n-s}
  \end{pmatrix}
  \]
  is a shuffle, whose parity $(n^2-s^2+n-s)/2-(j_1+\cdots+ j_{n-s})$ is denoted by $\wp(i_1,\ldots,i_s)$ or even by $\wp(\imath)$ if no confusion arises.

  Starting with $c^{n-1,0}=\underline{x_1^{-1}\cdots x_n^{-1}}\pl F/\pl x_0$, we have
  \begin{align*}
    d_{\mc F}(c^{n-1,0})&=(-1)^{n-1}\underline{x_1^{-1}\cdots x_n^{-1}}x_0\frac{\pl F}{\pl x_0}\mf f_0+(-1)^{n-1}\sum_{j=1}^n\underline{x_1^{-1}\cdots x_j^0\cdots x_n^{-1}}\frac{\pl F}{\pl x_0}\mf f_j\\
    &=(-1)^{n-1}\sum_{j=1}^n\underline{x_1^{-1}\cdots x_j^0\cdots x_n^{-1}}\biggl(\frac{\pl F}{\pl x_0}\mf f_j-\frac{\pl F}{\pl x_j}\mf f_0\biggr).
  \end{align*}
  Choose $c^{n-2,1}=(c^{n-2,1}_{i_1,\ldots,i_{n-1}})$ as
  \[
  c^{n-2,1}_{i_1,\ldots,i_{n-1}}=(-1)^{\wp(\imath)+1}\underline{x_{i_1}^{-1}\cdots x_{i_{n-1}}^{-1}}\biggl(\frac{\pl F}{\pl x_0}\mf f_{j_1}-\frac{\pl F}{\pl x_{j_1}}\mf f_0\biggr).
  \]
  One can easily show that $\pl_{\bou}(c^{n-2,1})=0$. Thus $d_{\mc F}(c^{n-2,1})=(-1)^{n-2}\pl_{\bov}(c^{n-2,1})$ whose components are
  \begin{align*}
    d_{\mc F}(c^{n-2,1}_{i_1,\ldots,i_{n-1}})&=(-1)^{j_1+1}\underline{x_{i_1}^{-1}\cdots x_{i_{n-1}}^{-1}}x_0\frac{\pl F}{\pl x_0}\mf f_{0j_1}+(-1)^{j_1+1}\sum_{l=1}^{n-1}\underline{x_{i_1}^{-1}\cdots x_{i_l}^0\cdots x_{i_{n-1}}^{-1}}\frac{\pl F}{\pl x_0}\mf f_{i_lj_1}\\
    &\varphantom{=}{}+(-1)^{j_1+1}\sum_{l=1}^{n-1}\underline{x_{i_1}^{-1}\cdots x_{i_l}^0\cdots x_{i_{n-1}}^{-1}}\frac{\pl F}{\pl x_{j_1}}\mf f_{0i_l}+(-1)^{j_1+1}\underline{x_{i_1}^{-1}\cdots x_{i_{n-1}}^{-1}}x_{j_1}\frac{\pl F}{\pl x_{j_1}}\mf f_{0j_1}\\
    &=(-1)^{j_1+1}\sum_{l=1}^{n-1}\underline{x_{i_1}^{-1}\cdots x_{i_l}^0\cdots x_{i_{n-1}}^{-1}}\biggl(\frac{\pl F}{\pl x_0}\mf f_{i_lj_1}+\frac{\pl F}{\pl x_{j_1}}\mf f_{0i_l}-\frac{\pl F}{\pl x_{i_l}}\mf f_{0j_1}\biggr).
  \end{align*}
  Choose $c^{n-3,2}=(c^{n-3,2}_{i_1,\ldots,i_{n-2}})$ as
  \[
  c^{n-3,2}_{i_1,\ldots,i_{n-2}}=(-1)^{\wp(\imath)}\underline{x_{i_1}^{-1}\cdots  x_{i_{n-2}}^{-1}}\biggl(\frac{\pl F}{\pl x_0}\mf f_{j_1j_2}-\frac{\pl F}{\pl x_{j_1}}\mf f_{0j_2}+\frac{\pl F}{\pl x_{j_2}}\mf f_{0j_1}\biggr)
  \]
  which is again in $\ker\pl_{\bou}$. Thus $d_{\mc F}(c^{n-3,2})=(-1)^{n-3}\pl_{\bov}(c^{n-3,2})$ whose components are
  \begin{align*}
    d_{\mc F}(c^{n-3,2}_{i_1,\ldots,i_{n-2}})&=(-1)^{n-j_1-j_2}\biggl(\underline{x_{i_1}^{-1}\cdots x_{i_{n-2}}^{-1}}x_0\frac{\pl F}{\pl x_0}\mf f_{0j_1j_2}+\sum_{l=1}^{n-2}\underline{x_{i_1}^{-1}\cdots x_{i_l}^0\cdots x_{i_{n-2}}^{-1}}\frac{\pl F}{\pl x_0}\mf f_{i_lj_1j_2}\\
    &\varphantom{=}{}+\sum_{l=1}^{n-2}\underline{x_{i_1}^{-1}\cdots x_{i_l}^0\cdots x_{i_{n-2}}^{-1}}\frac{\pl F}{\pl x_{j_1}}\mf f_{0i_lj_2}+\underline{x_{i_1}^{-1}\cdots x_{i_{n-2}}^{-1}}x_{j_1}\frac{\pl F}{\pl x_{j_1}}\mf f_{0j_1j_2}\\
    &\varphantom{=}{}+\sum_{l=1}^{n-2}\underline{x_{i_1}^{-1}\cdots x_{i_l}^0\cdots x_{i_{n-2}}^{-1}}\frac{\pl F}{\pl x_{j_2}}\mf f_{0j_1i_l}+\underline{x_{i_1}^{-1}\cdots x_{i_{n-2}}^{-1}}x_{j_2}\frac{\pl F}{\pl x_{j_2}}\mf f_{0j_1j_2}\\
    &=(-1)^{n-j_1-j_2}\sum_{l=1}^{n-2}\underline{x_{i_1}^{-1}\cdots x_{i_l}^0\cdots x_{i_{n-2}}^{-1}}\biggl(\frac{\pl F}{\pl x_0}\mf f_{i_lj_1j_2}+\frac{\pl F}{\pl x_{j_1}}\mf f_{0i_lj_2}\\
    &\varphantom{=}{}+\frac{\pl F}{\pl x_{j_2}}\mf f_{0j_1i_l}-\frac{\pl F}{\pl x_{i_l}}\mf f_{0j_1j_2}\biggr).
  \end{align*}
  Choose $c^{n-4,3}=(c^{n-4,3}_{i_1,\ldots,i_{n-3}})$ as
  \[
  c^{n-4,3}_{i_1,\ldots,i_{n-3}}=(-1)^{\wp(\imath)+1}\underline{x_{i_1}^{-1}\cdots  x_{i_{n-3}}^{-1}}\biggl(\frac{\pl F}{\pl x_0}\mf f_{j_1j_2 j_3}-\frac{\pl F}{\pl x_{j_1}}\mf f_{0j_2j_3}+\frac{\pl F}{\pl x_{j_2}}\mf f_{0j_1j_3}-\frac{\pl F}{\pl x_{j_3}}\mf f_{0j_1j_2}\biggr).
  \]

  Set $j_0=0$ by convention and continue the above procedure. We obtain
  \begin{equation}\label{eq:c}
  c^{s-1,n-s}_{i_1,\ldots,i_{s}}=(-1)^{\wp(\imath)+n-s}\underline{x_{i_1}^{-1}\cdots  x_{i_{s}}^{-1}}\sum_{m=0}^{n-s}(-1)^m\frac{\pl F}{\pl x_{j_m}}\mf f_{j_0\ldots \widehat{j_m}\ldots j_{n-s}}
  \end{equation}
  successively, which is obviously left preferred. In particular, when $s=1$,
  \[
  c^{0,n-1}_{i_1}=(-1)^{n-i_1}\underline{x_{i_1}^{-1}}\sum_{m=0}^{n-1}(-1)^m\frac{\pl F}{\pl x_{j_m}}\mf f_{j_0\ldots \widehat{j_m}\ldots j_{n-1}}
  \]
  and hence
  \begin{align*}
  d_{\mc F}(c^{0,n-1}_{i_1})&=(-1)^{n-i_1}\biggl(\underline{x_{i_1}^{0}}\sum_{m=0}^{n-1}(-1)^m\frac{\pl F}{\pl x_{j_m}}\mf f_{i_1j_0\ldots \widehat{j_m}\ldots j_{n-1}}+\underline{x_{i_1}^{-1}}\sum_{m=0}^{n-1}x_{j_m}\frac{\pl F}{\pl x_{j_m}}\mf f_{j_0\ldots  j_{n-1}}\biggr)\\
  &=(-1)^{n-i_1}\biggl(\underline{x_{i_1}^{0}}\sum_{m=0}^{n-1}(-1)^m\frac{\pl F}{\pl x_{j_m}}\mf f_{i_1j_0\ldots \widehat{j_m}\ldots j_{n-1}}-\underline{x_{i_1}^{0}}\frac{\pl F}{\pl x_{i_1}}\mf f_{j_0\ldots  j_{n-1}}\biggr)\\
  &=(-1)^{n-1}\underline{x_{i_1}^{0}}\biggl(\sum_{j_m<i_1}(-1)^m\frac{\pl F}{\pl x_{j_m}}\mf f_{j_0\ldots \widehat{j_m}\ldots i_1\ldots j_{n-1}}+\sum_{j_m>i_1}(-1)^{m+1}\frac{\pl F}{\pl x_{j_m}}\mf f_{j_0\ldots i_1\ldots \widehat{j_m}\ldots j_{n-1}}\\
  &\varphantom{=}{}+(-1)^{i_1}\underline{x_{i_1}^{0}}\frac{\pl F}{\pl x_{i_1}}\mf f_{j_0\ldots \widehat{i_1}\ldots j_{n-1}}\biggr)\\
  &=(-1)^{n-1}\underline{x_{i_1}^{0}}\sum_{m=0}^{n}(-1)^m \frac{\pl F}{\pl x_{j_m}}\mf f_{j_0\ldots \widehat{j_m}\ldots j_{n}}.
  \end{align*}
  So $d_{\mc F}(c^{0,n-1}_{i_1})$ is actually the restriction of the global section $(-1)^{n-1}\bou^\star$ to affine $V_{i_1}$. Hence the result follows.
\end{proof}

With minor modification, the proof of Lemma \ref{lem:k-to-k} is valid if the hypothesis $r\geq n$ is changed to $r<n$. Thus we obtain one more lemma as follows.

\begin{lemma}\label{lem:1-dim-tangent-sheaf}
  Suppose $d=n+1$ and $0\leq r\leq n-1$. There exists a cocycle
  \[
  (0,\ldots,0,c^{n-1-r,r},c^{n-r,r-1},\ldots,c^{n-1,0})
  \]
  in $\mc H^{n-1}_r$ where the components $c^{n-1-q,q}$ are given in \eqref{eq:c}. Each $c^{n-1-q,q}$ is left preferred in $\mc H_r^{n-1-q,q}$.
\end{lemma}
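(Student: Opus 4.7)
The plan is to imitate the inductive construction in the proof of Lemma \ref{lem:k-to-k} essentially verbatim, stopping the induction as soon as the next step would leave the truncated sheaf complex $\mc F_r^\bullet$. Concretely, I set $c^{s-1,n-s}$ to be given by \eqref{eq:c} for every $s\in\{n-r,n-r+1,\ldots,n\}$ (so that the sheaf degree $q=n-s$ satisfies $0\leq q\leq r$), and declare all remaining components of the prospective cochain in $\mc H^{n-1}_r$ to be zero. The claim that each $c^{s-1,n-s}$ is left preferred in $\mc H_r^{s-1,n-s}$ is immediate from \eqref{eq:c}: every summand involves a single basis vector $\mf f_{j_0\ldots\widehat{j_m}\ldots j_{n-s}}$ of $\mc K^{-(n-s)}(\bou;S)$, whose sheafification lies in $\mc O_X(n-s)^{\binom{n+1}{n-s}}$, i.e.\ in the leftmost column entry $(0,n-s)$ of $\tau^r\mc K^{\bullet,\bullet}(\bou,\bov;S)$ (see Figure \ref{fig:truncation-S}).

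To verify the cocycle condition, I will check cancellations of the total differential of $\mc H_r^\bullet$ position-by-position on the anti-diagonal $p+q=n$. For every interior position $(p,q)=(s-2,n-s+1)$ with $n-r\leq s-1\leq n-1$, the two contributions $d_{\mc F}(c^{s-1,n-s})$ and $\delta_{\simp}(c^{s-2,n-s+1})$ will cancel by exactly the explicit computation already carried out in the proof of Lemma \ref{lem:k-to-k}, since those computations never use the value of $r$ beyond the requirement that every summand involved lies inside $\mc F_r^\bullet$. At the simplicial-top position $(n,0)$ one has $\delta_{\simp}(c^{n-1,0})\in\check{\mbf{C}}^{\prime n}(\mf U,\mc F_r^0)=0$, because the covering $\mf U$ consists of only $n$ members.

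The one genuinely new verification, which is the main (and only) real obstacle, is at the position $(n-r-1,r+1)$ of $\mc H_r^{\bullet,\bullet}$: this position receives only the contribution $d_{\mc F}(c^{n-r-1,r})$, because its simplicial partner would be $\delta_{\simp}(c^{n-r-2,r+1})$ and the cochain $c^{n-r-2,r+1}$ is zero by definition. Since $c^{n-r-1,r}$ is left preferred and sits at the maximal available sheaf degree $r$, the $\pl_{\bou}$-part of $d_{\mc F}$ vanishes because the proof of Lemma \ref{lem:k-to-k} shows that every $c^{s-1,n-s}$ lies in $\ker\pl_{\bou}$, while the $\pl_{\bov}$-part would map the section into the leftmost column entry $(0,r+1)$ of the inner double complex, which is cut off by the truncation $\tau^r$ and is therefore identically zero inside $\mc F_r^{r+1}$. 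Hence $d_{\mc F}(c^{n-r-1,r})=0$, the proposed cochain is a cocycle, and the rest of the proof is inherited unchanged from Lemma \ref{lem:k-to-k}.
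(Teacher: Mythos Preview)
Your proposal is correct and takes essentially the same approach as the paper, which merely states that the proof of Lemma \ref{lem:k-to-k} carries over ``with minor modification''; you have spelled out that modification explicitly. The only genuinely new check is the vanishing at the top sheaf degree $r$, and your argument there is exactly the right one: $\pl_{\bou}(c^{n-r-1,r})=0$ because the expression in \eqref{eq:c} is visibly a Koszul boundary $\pl_{\bou}(\mf f_{j_0\ldots j_{n-s}})$ up to a scalar factor, and the $\pl_{\bov}$-contribution is suppressed by the truncation $\tau^r$. One small slip: in your description of the interior cancellations you label the position as $(s-2,n-s+1)$, but both $d_{\mc F}(c^{s-1,n-s})$ and $\delta_{\simp}(c^{s-2,n-s+1})$ land at $(s-1,n-s+1)$; this is a cosmetic indexing error with no bearing on the argument.
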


Note that there are $n$ copies of $k$ in the expression of $H^{n-1}(\mc H^{\bullet})$. They respectively come from $\check{\mbf{C}}^{\prime n-1}(\mf U, \mc F_r^0)$ for $0\leq r\leq n-1$. The class represented by the cocycle given in Lemma \ref{lem:1-dim-tangent-sheaf} is nontrivial since $c^{n-1,0}$ represents a nontrivial class. Consider the quasi-isomorphisms $\bar{\bar{\lam}}$ given in \eqref{eq:bar-bar-lambda} and $\ga$ given in Theorem \ref{thm:q-iso-GS}. The quasi-isomorphic image by $\ga\bar{\bar{\lam}}\colon \mc H^{\bullet}_r\to\bar{\mbf{C}}^{\prime\bullet}_{\GS}(\mc A)_r$ is a collection of local sections of the sheaf $\wedge^r \mc T_X$. More precisely, we summarize the fact as

\begin{proposition}\label{prop:1-dim-tangent}
Suppose $d=n+1$. For every $0\leq r\leq n-1$, there is a one-dimensional $k$-submodule of $H^{n-1-r}(X,\wedge^r\mc T_X)$, and consequently $H^{n-1-r}(X,\wedge^r\mc T_X)\neq 0$.
\end{proposition}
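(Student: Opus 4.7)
My plan is to produce the required class by taking the cocycle
$\mathbf{c}_r=(0,\ldots,0,c^{n-1-r,r},c^{n-r,r-1},\ldots,c^{n-1,0})$
of $\mc H^{n-1}_r$ furnished by Lemma \ref{lem:1-dim-tangent-sheaf} and tracking it through $\ga\bar{\bar{\lam}}\colon \mc H^\bullet_r\to\bar{\mbf C}^{\prime\bullet}_{\GS}(\mc A)_r$ into the polyvector-field part of the Gerstenhaber--Schack complex.

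First I would check that $[\mathbf{c}_r]\neq 0$ in $H^{n-1}(\mc H^\bullet_r)$. By the spectral sequence analysis in Case 2 the bottom component $c^{n-1,0}=x_1^{-1}\cdots x_n^{-1}\pl F/\pl x_0$ represents the unique (up to scaling) nontrivial class in ${}^{I\!I}E^{0,n-1}_{r,2}\cong k$, and for $r\leq n-1$ the only potentially destructive differential out of this position was shown in Case 2(1) to vanish, so the class survives to ${}^{I\!I}E^{0,n-1}_{r,\infty}$. Hence the image of $[\mathbf{c}_r]$ in the associated graded of the row-filtration is nonzero.

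Next I would exploit the left-preferred property of every component $c^{n-1-q,q}\in\check{\mbf C}^{\prime n-1-q}(\mf U,\mc O_X(q)^{\binom{n+1}{q}})$. By the formulas \eqref{eq:qiso-affine} defining $\beta$, and with $\pi$ projecting onto the top-row summand, the image $\ga\bar{\bar{\lam}}(c^{n-1-q,q})$ is a \v{C}ech $(n-1-q)$-cochain on $\mf V$ whose local entries are pure $\cup$-wedges $\ppl/\pl y_{j_1}\cup\cdots\cup\ppl/\pl y_{j_q}$ of partial-derivative Hochschild cochains. Under the HKR-type identification of such wedges with local sections of $\wedge^q\mc T_X$, together with the compatibility formulas of Lemma \ref{lem:pseudopartial-compatible} that govern transitions across overlaps in $\mf V$, the full image $\ga\bar{\bar{\lam}}(\mathbf{c}_r)$ lies in the sub-bicomplex $\check{\mbf C}^{\prime\bullet}(\mf V,\wedge^\bullet\mc T_X)$. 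Its top component $\ga\bar{\bar{\lam}}(c^{n-1-r,r})$ is then a \v{C}ech $(n-1-r)$-cocycle with values in $\wedge^r\mc T_X$ and produces a class in $\check{H}^{n-1-r}(\mf V,\wedge^r\mc T_X)\cong H^{n-1-r}(X,\wedge^r\mc T_X)$; this class is nonzero because it maps precisely onto the nonzero ${}^{I\!I}E^{0,n-1}_{r,\infty}$-piece identified above.

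The hard part will be this middle step: making rigorous the identification of a left-preferred cochain, built purely from polynomial data, with an honest global section of $\wedge^r\mc T_X$ on the possibly singular $X$. Verifying that the local expressions in $\ppl/\pl y_{j_m}$ glue coherently across $\mf V$ reduces to the compatibility relations of Lemma \ref{lem:pseudopartial-compatible} together with the canonical behaviour of the isomorphisms $\xi'_{t,s}$ used in Theorem \ref{thm:q-iso-GS}, but matching these with the twist conventions of Figure \ref{fig:truncation-S} and with the HKR interpretation on singular affine pieces will require careful bookkeeping.
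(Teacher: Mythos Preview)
Your proposal follows essentially the same approach as the paper: take the explicit cocycle $\mathbf{c}_r$ from Lemma \ref{lem:1-dim-tangent-sheaf}, observe it is nontrivial because its bottom component $c^{n-1,0}$ represents the generator of ${}^{I\!I}E^{0,n-1}_{r,\infty}\cong k$, and then push it through $\gamma\bar{\bar\lambda}$ to obtain a nonzero class expressed via local sections of $\wedge^r\mc T_X$.

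One simplification you missed: for $q<r$ the image $\gamma\bar{\bar\lambda}(c^{n-1-q,q})$ is actually \emph{zero}, not a $q$-fold $\cup$-wedge. Indeed $\gamma$ preserves the Hodge decomposition (Theorem \ref{thm:q-iso-GS}), and the $r$-th Hodge component $\bar C^q(A,A)_r$ vanishes for $q<r$; equivalently, $\pi_{(r)}$ annihilates everything below the top row of $\tau^r\mc K^{\bullet,\bullet}$. Thus $\gamma\bar{\bar\lambda}(\mathbf{c}_r)$ is concentrated purely in bidegree $(n-1-r,r)$, and being a GS cocycle there it is automatically both a Hochschild $r$-cocycle and a simplicial (\v Cech) $(n-1-r)$-cocycle. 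This removes the need to argue separately that the ``top component'' is \v Cech-closed. That it lands in $\wedge^r\mc T_X$ then follows from the explicit formula \eqref{eq:c}: since $\pl_{\bou}(c^{n-1-r,r})=0$, the image under $\beta\circ\pi$ is a combination of $\cup$-wedges of the $\ppl/\pl y_j$ annihilated by $d_{\Hoch}$, hence a genuine $r$-polyvector field on each affine piece. The compatibility across charts is exactly what Lemma \ref{lem:pseudopartial-compatible} and the construction of $\gamma$ provide, so the ``hard part'' you flag is already handled by the machinery of \S\ref{subsec:quasi-iso-GS}.
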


\subsubsection{Case 3: $d< n+1$}
This is an easy case, since the complex \eqref{eq:complex-R} is zero. The results are
\[
{}^{I\!I}E^{p,q}_{r,\infty}={}^{I\!I}E^{p,q}_{r,2}=
\begin{cases}
P^{p-2r}_{r+(p-r)(d-1)}, & r< p\leq 2r,\, q=0,\\
Q^{-r}_{r}, & p= r,\, q=0,\\
0, & \text{otherwise,}
\end{cases}
\]
and
\[
H^i(\mc H^{\bullet})\cong
\bigoplus_{r<i}P^{i-2r}_{r+(i-r)(d-1)}\oplus Q^{-i}_{i}.
\]

\subsection{Characterization of smoothness}\label{subsec:char-smooth}

In this section, we give a necessary and sufficient condition under which a hypersurface is smooth.

In the proof (not in the statement) of Theorem \ref{thm:smooth-equiv}, we make use of the following subgroups of $H^2_{\mathrm{GS}}(\aaa)_1$:
\begin{itemize}
\item the subgroup $E_{\mathrm{res}}$ of $2$-classes of the form $[(0,f,0)]$;
\item the subgroup $E_{\mathrm{mult}}$ of $2$-classes of the form $[(m, 0, 0)]$.
\end{itemize}

First of all, based upon the expression of $H^2(\mc H^\bullet_1)$ from \S\ref{subsec:compute-cohomology}, we obtain that $H^2_{\GS}(\mc A)_1$ contains $P^0_d$ as a summand for any $n$ and $d$. Every element $t\in P^0_d$ corresponds to a class in $E_\mathrm{mult}$. Let us consider when $t$ also belongs to $E_\mathrm{res}$.

Since $t\in P^0_d=(S/(\im\pl_{\bou}))_d$, $t$ lifts to an element $\bar{t}$ in $S_d$. We then identify $\bar{t}$ to a global section of $\mc O_X(d)$. For any $V\in\mf V$, $\bar{t}|_V\in \mc A(V)$ determines the left multiplication by $\bar{t}|_V$ on $\mc A(V)$, and so $\bar{t}|_V\circ\pmul$ represents a class in $H^2_{(1)}(\mc A(V),\mc A(V))$  which is independent of the choice of $\bar{t}$. Hence $t\in H^2_{\GS}(\mc A)_1$ is represented by the GS $2$-cocycle $(\bar{t}\circ\pmul,0,0):=((\bar{t}|_V\circ\pmul)_V,0,0)$ which only deforms the local multiplications of $\mc A$. If $\bar{t}|_V\circ\pmul$ happens to be a coboundary for all $V$, we have cochains $s_V\in C^1(\mc A(V),\mc A(V))$ such that $d_{\Hoch}(s_V)=\bar{t}|_V\circ\pmul$. Let $s=(s_V)_V\in\bar{\mbf C}^{\prime 0,1}(\mc A)$ and so $(\bar{t}\circ\pmul,0,0)-(0,-d_{\simp}(s),0)=d_{\GS}(s,0)$. Thus $t=[(\bar{t}\circ\pmul,0,0)]=[(0,-d_{\simp}(s),0)]$ belongs to $E_{\mathrm{mult}} \cap E_{\mathrm{res}}$. In the other direction, if $t\in E_\mathrm{mult}$ is also in $E_\mathrm{res}$, then we assume its representation is $(0,f,0)$. The difference $(\bar{t}\circ\pmul,0,0)-(0,f,0)$ has to be a GS coboundary, say $d_{\GS}(s,0)$. It follows that $\bar{t}|_V\circ\pmul=d_{\Hoch}(s_V)$ for all $V\in\mf V$.

Summarizing, we have $t\in E_{\mathrm{mult}} \cap E_{\mathrm{res}}$ if and only if $\bar{t}|_V\circ\pmul$ is a Hochschild $2$-coboundary for every $V\in \mf V$. Note that $\mc A(V)$ is a localization of $\mc A(U)$ if $V\subseteq U$. It follows that  $\bar{t}|_V\circ\pmul$ is a coboundary of $\mc A(V)$ provided that $\bar{t}|_U\circ\pmul$ is a coboundary of $\mc A(U)$. So this condition is again equivalent to the fact that $\bar{t}|_{U_i}\circ\pmul$ is a coboundary of $A_i$ for all $1\leq i\leq n$. By \S\ref{sec:HHaffine},
 \[
 H^2_{(1)}(A_i,A_i)= A_i\bigg/\biggl(\frac{\pl G_i}{\pl y_0},\ldots,\frac{\pl G_i}{\pl y_{i-1}},\frac{\pl G_i}{\pl y_{i+1}},\ldots,\frac{\pl G_i}{\pl y_3}\biggr)
 \]
and $\bar{t}|_{U_i}\circ\pmul$ is a coboundary if and only if $\bar{t}|_{U_i}$ is sent to zero by the projection $A_i\to H^2_{(1)}(A_i,A_i)$. Since $A_i=k[y_0\ldots,y_{i-1},y_{i+1},\ldots,y_n]/(G_i)$ and
\[
\sum_{j\neq i}y_j\frac{\pl G_i}{\pl y_j}+H_i=d\cdot G_i,
\]
we have
 \[
 H^2_{(1)}(A_i,A_i)= k[y_0,\ldots,y_{i-1},y_{i+1},\ldots,y_n]\bigg/\biggl(\frac{\pl G_i}{\pl y_0},\ldots,\frac{\pl G_i}{\pl y_{i-1}},H_i,\frac{\pl G_i}{\pl y_{i+1}},\ldots,\frac{\pl G_i}{\pl y_n}\biggr).
 \]
Recall the definition of $H_i$ given in \S\ref{subsec:quasi-iso-GS}. There is an algebra map $P^0\to H^2_{(1)}(A_i,A_i)$ defined by $x_j\mapsto y_j$ if $j\neq i$ and $x_i\mapsto 1$, whose kernel is $(x_i-1)P^0$. Thus $t\in E_{\mathrm{res}}$ if and only if $t\in\cap_{i=1}^n(x_i-1)P^0$. Notice that $t$ is homogeneous. If $t=(1-x_i)T_i$ for some $T_i\in P^0$, by comparing the homogeneous components, we conclude that $t$ is annihilated by a power of $x_i$ and $T_i=\sum_{m=0}^{\infty} tx_i^m$ which is actually a finite sum. In the opposite direction, if $t$ is annihilated by a power of $x_i$, then $t=(1-x_i)\sum_{m=0}^{\infty} tx_i^m\in (x_i-1)P^0$. Consequently, we have proven

\begin{lemma}\label{lem:mult-res}
 Let $t\in P^0_d$. Then $t\in E_{\mathrm{res}}$ if and only if $x_i\in\sqrt{\mathrm{ann}_{P^0}^{}(t)}$ for all $1\leq i\leq n$.
\end{lemma}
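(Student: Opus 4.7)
The plan is to leverage the discussion immediately preceding the lemma, which has already done the conceptual heavy lifting: it reduces the condition $t\in E_{\mathrm{res}}$ (for $t\in P^0_d$) to the membership $t\in\bigcap_{i=1}^n(x_i-1)P^0$, via the chain of equivalences passing through Hochschild-coboundary conditions on $\bar{t}|_{U_i}\circ\pmul$ and the kernel of the surjection $P^0\twoheadrightarrow H^2_{(1)}(A_i,A_i)$ given by $x_j\mapsto y_j$ ($j\neq i$), $x_i\mapsto 1$. So the task reduces to proving, for each fixed $i$, the purely graded-algebra equivalence
\[
t\in (x_i-1)P^0 \quad\Longleftrightarrow\quad x_i^m t=0\text{ in }P^0\text{ for some }m\geq 1.
\]

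For the direction ``$\Leftarrow$'', I would invoke the telescoping identity $x_i^m-1=(x_i-1)(1+x_i+\cdots+x_i^{m-1})$: if $x_i^m t=0$ in $P^0$, then $t=-(x_i-1)(1+x_i+\cdots+x_i^{m-1})t$, placing $t$ inside $(x_i-1)P^0$.

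For the direction ``$\Rightarrow$'', I would exploit the grading on $P^0$ inherited from $S$; the relations $\pl_{\bou}\bigl(\mc K^{-1}(\bou;S)\bigr)$ are homogeneous, so $P^0$ is a nonnegatively graded $k$-algebra, and $t$ lives in the single graded piece $P^0_d$. Writing $t=(x_i-1)T$ with $T=\sum_{k\geq 0}T_k$ a \emph{finite} sum of homogeneous components, I would compare homogeneous pieces in the equation $t=x_iT-T$: this forces $T_k=x_iT_{k-1}$ for every $k\neq d$, together with $T_d=x_iT_{d-1}-t$. Starting from $T_{-1}=0$ (nonnegative grading), an easy downward induction gives $T_k=0$ for $k<d$, whence $T_d=-t$ and then $T_{d+k}=-x_i^k t$ for $k\geq 1$. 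Since $T$ has only finitely many nonzero homogeneous components, $x_i^m t=T_{d+m}=0$ for some $m\geq 1$, which is exactly $x_i\in\sqrt{\mathrm{ann}_{P^0}(t)}$.

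The only real subtlety is the graded bookkeeping in the ``$\Rightarrow$'' direction, where one must use both that $P^0$ is nonnegatively graded (to kill low-degree $T_k$) and that $T$ itself is a finite sum (to extract the annihilation). Every other step is a direct reassembly of the explicit calculations performed in the paragraphs preceding the lemma statement, so I do not anticipate any further obstacle.
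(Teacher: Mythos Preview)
Your proposal is correct and follows essentially the same route as the paper's own argument: both reduce to the equivalence $t\in(x_i-1)P^0 \Leftrightarrow x_i^m t=0$ for some $m$, handle the ``$\Leftarrow$'' direction via the telescoping factorization of $1-x_i^m$, and handle ``$\Rightarrow$'' by comparing homogeneous components to read off $T_{d+k}=\pm x_i^k t$ and then invoke finiteness of $T$. (One tiny slip: the induction you describe for $T_k=0$ when $k<d$ is \emph{upward} from $T_{-1}=0$, not downward; this does not affect the argument.)
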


Next let us recall the work \cite{Gerstenhaber-Schack:survey} by Gerstenhaber and Schack. Starting from their Hodge decomposition for presheaves of commutative algebras
\begin{equation}\label{Hodgedec}
H^i_{\GS}(\mc A) = \bigoplus_{r\in\nan}H^i_{\mathrm{GS}}(\aaa)_r,
\end{equation}
they prove the existence of the HKR type decomposition
\[
H^i_{\GS}(\mc A)\cong\bigoplus_{p+q=i}H^p_{\mathrm{simp}}(\mf V,\wedge^q\mc T)
\]
for any smooth complex projective variety $X$, where $\mc A=\mc O_X|_{\mf V}$ (resp.\ $\mc T=\mc T_X|_{\mf V}$) is the restriction of the structure sheaf (resp.\ tangent sheaf) to an affine open covering $\mf V$ closed under intersection. In particular,
\[
H^2_{\GS}(\mc A)\cong H^0_{\mathrm{simp}}(\mf V,\wedge^2\mc T)\oplus H^1_{\mathrm{simp}}(\mf V,\mc T) \oplus H^2_{\mathrm{simp}}(\mf V,\mc A).
\]
The roles played by the three summands in the deformation of $\mc A$ (viewed as a twisted presheaf) are explained in \cite{DLL:defo-qch}. More concretely, elements in the three summands respectively deform the (local) multiplications, the restriction maps, and the twisting elements of $\mc A$. If $X$ is not necessarily smooth, Gerstenhaber and Schack's result remains partially correct: $H^i_{\GS}(\mc A)_r\cong H^{i-r}_{\mathrm{simp}}(\mf V,\wedge^r\mc T)$ if $r=0$ or $r =i$, and in general $H^i_{\GS}(\mc A)_{i-1}$ contains $H^{1}_{\mathrm{simp}}(\mf V,\wedge^{i-1}\mc T)$ as a $k$-submodule. For $i = 2$, we more precisely have
\begin{equation}
H^1_{\mathrm{simp}}(\mf V,\mc T) \cong E_{\mathrm{res}} \subseteq H^2_{\GS}(\mc A)_1.
\end{equation}
In particular, \eqref{Hodgedec} now yields
\begin{equation}\label{eq:HKR-decomp-second}
H^2_{\GS}(\mc A)\cong H^0_{\mathrm{simp}}(\mf V,\wedge^2\mc T)\oplus H^1_{\mathrm{simp}}(\mf V,\mc T) \oplus H^2_{\mathrm{simp}}(\mf V,\mc A)\oplus E.
\end{equation}
where $E$ is a complement of $E_{\mathrm{res}}$ in $H^2_{\GS}(\mc A)_1$.

When $X$ is a projective hypersurface, the isomorphism $H^p(X,\wedge^q\mc T_X)\cong H^p_{\simp}(\mf V, \wedge^q\mc T)$ holds for all $p$, $q$. The decomposition \eqref{eq:HKR-decomp-second} is equivalent to
\[
HH^2(X)\cong H^0(X,\wedge^2\mc T_X)\oplus H^1(X,\mc T_X) \oplus H^2(X,\mc O_X)\oplus E.
\]

We have thus proven:

\begin{proposition}\label{propHKRdef}
Let $X$ be a projective hypersurface. The following are equivalent:
\begin{enumerate}
\item The HKR decomposition holds for the second cohomology, i.e.
      \[
        HH^2(X)\cong H^0(X,\wedge^2\mc T_X)\oplus H^1(X,\mc T_X) \oplus H^2(X,\mc O_X).
      \]
\item We have $H^1(X,\mc T_X) \cong E_{\mathrm{res}} = H^2_{\GS}(\mc A)_1$.
\end{enumerate}
\end{proposition}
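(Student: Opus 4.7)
The plan is to reduce the claimed equivalence to matching a single summand inside the first Hodge component. The first step is to combine \cite[Thm.\ 7.8.1]{Lowen-VandenBergh:hoch} with the degree-two part of the Hodge decomposition \eqref{Hodgedecintro} to write
\[
HH^2(X) \;\cong\; H^2_{\GS}(\mc A) \;=\; H^2_{\GS}(\mc A)_0 \oplus H^2_{\GS}(\mc A)_1 \oplus H^2_{\GS}(\mc A)_2.
\]
From the general (not necessarily smooth) Gerstenhaber--Schack setup recalled just above the statement, the two extremal Hodge components are unconditionally identified: $H^2_{\GS}(\mc A)_0 \cong H^2_{\simp}(\mf V, \mc A) \cong H^2(X, \mc O_X)$ and $H^2_{\GS}(\mc A)_2 \cong H^0_{\simp}(\mf V, \wedge^2 \mc T) \cong H^0(X, \wedge^2 \mc T_X)$, accounting for two of the three summands on the right-hand side of (1).

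After substituting and cancelling these matched summands, condition (1) collapses to the single requirement that $H^2_{\GS}(\mc A)_1 \cong H^1(X, \mc T_X)$ as $k$-vector spaces. At this point I would invoke the inclusion $E_{\mathrm{res}} \subseteq H^2_{\GS}(\mc A)_1$ together with the standard Gerstenhaber--Schack identification $E_{\mathrm{res}} \cong H^1_{\simp}(\mf V, \mc T) \cong H^1(X, \mc T_X)$, together with the canonical identification $H^1_{\simp}(\mf V, \mc T) \cong H^1(X, \mc T_X)$ recorded in the line just before the proposition. Since all groups involved are finite-dimensional over $k$, the existence of an abstract isomorphism $H^2_{\GS}(\mc A)_1 \cong H^1(X, \mc T_X)$ is equivalent, by a dimension count, to the inclusion $E_{\mathrm{res}} \subseteq H^2_{\GS}(\mc A)_1$ being an equality, which is exactly condition (2).

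There is essentially no obstacle: all ingredients (the $HH \cong H_{\GS}$ comparison, the two unconditional Hodge identifications in the extremal degrees, and the description of $E_{\mathrm{res}}$ as restriction-map deformation classes) have been assembled in the paragraphs immediately preceding the statement, and the argument is a pure bookkeeping exercise combining them. The only point requiring any care is to make explicit that the passage from the refined decomposition \eqref{eq:HKR-decomp-second} to the collapsed form in (1) amounts precisely to the vanishing of the complement $E$ of $E_{\mathrm{res}}$, and hence to $E_{\mathrm{res}} = H^2_{\GS}(\mc A)_1$.
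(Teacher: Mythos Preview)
Your proof is correct and follows essentially the same approach as the paper. The paper assembles the decomposition $HH^2(X)\cong H^0(X,\wedge^2\mc T_X)\oplus H^1(X,\mc T_X)\oplus H^2(X,\mc O_X)\oplus E$ with $E$ a complement of $E_{\mathrm{res}}$ in $H^2_{\GS}(\mc A)_1$, and then reads off that (1) holds iff $E=0$ iff $E_{\mathrm{res}}=H^2_{\GS}(\mc A)_1$; you make the finite-dimensionality underpinning this step explicit, which the paper leaves implicit.
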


\begin{remark}
In deformation theoretic terms, Proposition \ref{propHKRdef} states that for a projective hypersurface $X$, the HKR decomposition holds for $HH^2(X)$ if and only if every (commutative) scheme deformation of $X$ can be realized by only deforming restriction maps while trivially deforming individual algebras on an affine cover. This is the classical deformation picture for smooth schemes.
\end{remark}

We have the following converse of the HKR theorem for projective hypersurfaces:

%Obviously, $H^1(X,\mc T_X)\cong H^1_{\simp}(\mf V,\mc T)\cong E_\mathrm{res}$. Let us give a characterization of smoothness of $X$.

\begin{theorem}\label{thm:smooth-equiv}
  Let $X$ be a projective hypersurface. The following are equivalent:
  \begin{enumerate}
    \item $X$ is smooth.
    \item The HKR decomposition holds for all cohomology groups, i.e.
      \[
        HH^i(X)\cong \bigoplus_{p+q=i}H^p(X,\wedge^q\mc T_X),\quad \forall\; i\in\nan.
      \]
    \item The HKR decomposition holds for the second cohomology, i.e.
      \[
        HH^2(X)\cong H^0(X,\wedge^2\mc T_X)\oplus H^1(X,\mc T_X) \oplus H^2(X,\mc O_X).
      \]
  \end{enumerate}
\end{theorem}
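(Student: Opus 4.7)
My plan is to handle (1) $\Rightarrow$ (2) $\Rightarrow$ (3) $\Rightarrow$ (1). The first implication is the classical HKR theorem for smooth projective varieties, already recorded via the isomorphism \eqref{eq:HKR-decomp-introduction}, and the second is trivial, so all of the content lies in the reverse implication (3) $\Rightarrow$ (1).

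First I will invoke Proposition \ref{propHKRdef} to rephrase (3) as the equality $E_{\mathrm{res}} = H^2_{\GS}(\mc A)_1$. The discussion at the opening of \S\ref{subsec:char-smooth} then tells me that the summand $P^0_d$ of $H^2_{\GS}(\mc A)_1$ (present for all $n$ and $d$ in all three regimes of Theorem \ref{thmmain}) sits inside $E_{\mathrm{mult}}$, with a class corresponding to $t\in P^0_d$ represented by the explicit GS cocycle $(\bar t\circ \pmul,0,0)$. The hypothesis therefore forces $P^0_d \subseteq E_{\mathrm{res}}$, and Lemma \ref{lem:mult-res} converts this into the concrete algebraic statement that $x_i \in \sqrt{\mathrm{ann}_{P^0}(t)}$ for every $t \in P^0_d$ and every $i \in \{1, \ldots, n\}$, where $P^0 = R/J$ is the Jacobian ring and $J = (\partial F/\partial x_0, \ldots, \partial F/\partial x_n)$.

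Next I will specialize to $t = \overline{x_i^d}$ for each $i \in \{1, \ldots, n\}$. Whether or not this class happens to already vanish in $P^0$, the criterion above immediately forces some power of $x_i$ to lie in $J$, so $x_1, \ldots, x_n$ are all nilpotent in $R/J$. To reach the remaining variable $x_0$ I will use the standing hypothesis that $F$ contains $x_0^d$ as a summand: writing $F = x_0^d + F'$ with $F' \in (x_1, \ldots, x_n)$, Euler's identity yields $F \in J$, and since $(x_1, \ldots, x_n) \subseteq \sqrt{J}$ from the previous step, $F'$ and hence $x_0^d$ is nilpotent modulo $J$. Thus $\sqrt{J} \supseteq (x_0, x_1, \ldots, x_n)$, and by the homogeneous Nullstellensatz $V(J) \cap \mbb P^n = \emptyset$, which is precisely the smoothness of $X$.

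The most delicate point, to my mind, will be handling the variable $x_0$: Lemma \ref{lem:mult-res} produces information only about $x_1, \ldots, x_n$, because the chosen cover $\mf U$ omits the chart $U_0$. One genuinely needs the structural hypothesis that $x_0^d$ appears as a summand of $F$, together with Euler's identity, in order to bootstrap nilpotency of $x_0$ from the nilpotency of the other $x_i$'s. Once this short algebraic maneuver is in place, all of the machinery of \S\ref{subsec:compute-cohomology} and \S\ref{subsec:char-smooth} has already been set up so that the conclusion follows essentially formally.
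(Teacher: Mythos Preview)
Your proof is correct and rests on the same key ingredients as the paper's: Proposition~\ref{propHKRdef}, Lemma~\ref{lem:mult-res}, and the test classes $x_i^d\in P^0_d$. The only difference is packaging. The paper argues the contrapositive: if $X$ is not smooth then some affine chart $A_i$ (with $1\le i\le n$) is not smooth, whence $P^0/(x_i-1)\neq 0$, so no power of $x_i$ vanishes in $P^0$; then $x_i^d$ is a nonzero class with $x_i\notin\sqrt{\mathrm{ann}_{P^0}(x_i^d)}$, and Lemma~\ref{lem:mult-res} places it outside $E_{\mathrm{res}}$. Because the contrapositive starts from a singular point lying in one of the charts $U_1,\ldots,U_n$ (which already cover $X$ thanks to the $x_0^d$ summand), the paper never needs to argue separately about $x_0$. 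Your direct argument is equally valid, but the ``delicate point'' you flagged---bootstrapping nilpotency of $x_0$ via Euler's identity---is an artifact of the direct direction and disappears in the paper's formulation.
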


\begin{proof}
It remains to prove (3)\,$\Rightarrow$\,(1). Assume $X$ is a hypersurface of degree $d$ in $\mbb P^n$ which is not smooth. According to Proposition \ref{propHKRdef}, it suffices to produce a class in $H^2_{\GS}(\mc A)_1 \setminus E_\mathrm{res}$.
 At least one of the algebras $A_i$ is not smooth, say $A_n$. By Remark \ref{remsmooth}, it follows that $H^2_{(1)}(A_n,A_n)\neq 0$. As before, we know
  \begin{align*}
    H^2_{(1)}(A_n,A_n)&=k[y_0,\ldots,y_{n-1}]\bigg/\biggl(\frac{\pl G_n}{\pl y_0},\cdots,\frac{\pl G_n}{\pl y_{n-1}}, H_n\biggr)\\
    &\cong R\bigg/\biggl(x_n-1,\frac{\pl F}{\pl x_0},\cdots,\frac{\pl F}{\pl x_{n-1}},\frac{\pl F}{\pl x_n}\biggr)\\
    &=P^0/(x_n-1).
  \end{align*}
  Since $P^0/(x_n-1)\neq 0$ this implies that $0\neq x_n^m\in P^0$ for any $m\in\nan$. In particular, $x_n^d\in P^0_d$ presents a non-trivial class in $H^2_{\GS}(\mc A)_1$, and $x_n\notin\sqrt{\mathrm{ann}^{}_{P^0}(x_n^d)}$. By Lemma \ref{lem:mult-res}, $x_n^d\notin E_\mathrm{res}$, which finishes the proof. \end{proof}

\subsection{Examples of intertwined classes}\label{subsec:intertwined}

We are particularly interested in $HH^2(X)$ since it parameterizes the equivalence classes of first order deformations of $X$. We retain the notations used before. On one hand, we have the decomposition \eqref{eq:HKR-decomp-second}. On the other hand, any GS $2$-cocycle
\[
(m,f,c)\in \bar{\mbf C}^{\prime 0,2}(\mc A)\oplus \bar{\mbf C}^{\prime 1,1}(\mc A)\oplus \bar{\mbf C}^{\prime 2,0}(\mc A)
\]
factors as $(m-m^{\ab},0,0)+(m^{\ab},f,0)+(0,0,c)$ under the Hodge decomposition where $m^{\ab}$ depends only on $m$. Since $E\subseteq H^2_{\GS}(\mc A)_1$, the elements in $E$ admit representatives of the form $(m,f,0)$. Normally, neither $(m,0,0)$ nor $(0,f,0)$ is a cocycle. The cocycle is called \emph{untwined} if $(m,0,0)$ or, equivalently $(0,f,0)$ is a cocycle. A $2$-class is called \emph{intertwined} if it has no untwined representative. %Any representative cocycle of an intertwined class will also be called \emph{intertwined}.

In this section, we will given examples of such intertwined $2$-classes. By the decomposition of $\mc H^\bullet$ and by Theorem \ref{thm:q-iso-GS}, classes in $H^2(\mc H^\bullet_0)$ and $H^2(\mc H^\bullet_2)$ have untwined representatives of the form $(0,0,c)$ and $(m,0,0)$ respectively. It is sufficient to consider $H^2(\mc H^\bullet_1)$. Moreover, we exclude the case $n=1$ since this is affine case.

First of all, by the discussion in \S\ref{subsec:compute-cohomology}, $H^2(\mc H^\bullet_1)$ is the direct sum of $P^0_d$ and $Q^{-2}_2$ if $d<n+1$. Via the quasi-isomorphisms $\mc H^\bullet\to\mc G^\bullet\to \mc E^\bullet\to\bar{\mbf C}_{\GS}^{\prime\bullet}(\mc A)$, any element in $P^0_d$ or $Q^{-2}_2$ gives rise to a GS 2-class of the form $[(m,0,0)]\in H^2_{\GS}(\mc A)$. So intertwined $2$-class never exists if $d<n+1$.

Next, besides $P^0_d$ and $Q^{-2}_2$, $H^2(\mc H^\bullet_1)$ contains $k$ as a direct summand if $d=n+1$. By Proposition \ref{prop:1-dim-tangent}, any nonzero element in $k$ corresponds to a nonzero class in $H^1(X,\mc T_X)$ which clearly admits a representative of the form $(0,f,0)$.

Thus an intertwined class exists only possibly in $\scr S(Z^{n-3}_{d-4})$ in the case $d>n+1$. Necessarily, $n\leq 3$ since $Z^{n-3}=0$ for all $n>3$. Since $n=3$ implies $\scr S(Z^0_{d-4})\subseteq H^2(\mc H^\bullet_0)$, $n=2$ is the unique choice, and so $d>3$. Moreover, by the definition of $Z^{-1}_{d-4}$, the short sequence
\begin{equation}\label{eq:intertwine}
0\To Z^{-1}_{d-4}\To R^3_{d-4}\xrightarrow[\quad]{\pl_{\bov}} R^{}_{d-3}\To 0
\end{equation}
is exact. It follows that $Z^{-1}_{d-4}\neq 0$ only if $d>4$.

We have proven:

\begin{proposition}\label{propint}
	Suppose either $n \neq 2$ or $n = 2$ and $d \leq 4$. Then $H^2_{\mathrm{GS}}(\aaa)$ does not contain an intertwined cohomology class.
\end{proposition}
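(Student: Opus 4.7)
The plan is to combine the Hodge decomposition with the explicit description of $H^2(\mc H^\bullet)$ in Theorem~\ref{thmmain} and the quasi-isomorphism $\ga\bar{\bar{\lam}}$ of Theorem~\ref{thm:q-iso-GS} to single out the summand that could possibly carry an intertwined class, and then to show that this summand vanishes under the hypotheses.

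First I would note that intertwined classes cannot live in Hodge components $0$ or $2$: a class in $H^2_{\GS}(\mc A)_0$ always has a representative of the form $(0,0,c)$, and a class in $H^2_{\GS}(\mc A)_2$ always has an antisymmetric representative of the form $(m,0,0)$, both of which are trivially untwined. So it suffices to work inside $H^2(\mc H^\bullet_1)$. From the spectral sequence computation in \S\ref{subsec:compute-cohomology}, the Hodge $1$ part of $H^2(\mc H^\bullet)$ decomposes as follows: always the summand $P^0_d$, which under $\ga\bar{\bar{\lam}}$ lands in the $(0,2)$-spot of the GS double complex and hence gives representatives of the form $(m,0,0)$; when $d=n+1$, possibly a further copy of $k$ which, by Proposition~\ref{prop:1-dim-tangent}, corresponds to a class in $H^1(X,\mc T_X)\subseteq E_{\mathrm{res}}$ and therefore admits a representative of the form $(0,f,0)$; and when $d>n+1$, the extra term $\scr S(Z^{n-3}_{d-4})$, which is the only candidate for carrying an intertwined class.

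It remains to rule out this last term. I would split on $n$: for $n\geq 4$ the index $n-3$ exceeds the cohomological range of $\mc K^\bullet(\bov;R)$, so $Z^{n-3}=0$; for $n=3$ the spectral sequence places $\scr S(Z^0_{d-4})$ in Hodge component $0$ instead of $1$, so it does not affect the question; and $n=1$ is the excluded affine situation. In the remaining case $n=2$ (where the $\scr S$ term does sit in Hodge $1$), Koszul exactness of the regular sequence $(x_0,x_1,x_2)$ in $R$ gives, as soon as $d-3\geq 1$, the short exact sequence
\begin{equation*}
0\To Z^{-1}_{d-4}\To R^3_{d-4}\xrightarrow{\pl_{\bov}} R_{d-3}\To 0,
\end{equation*}
and a direct dimension count shows that $\dim R^3_{d-4}=\dim R_{d-3}$ precisely when $d\leq 4$, forcing $Z^{-1}_{d-4}=0$ in that range. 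Assembling the cases yields the proposition.

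I do not foresee any serious obstacle here. The only step with genuine content is the bookkeeping of how each summand of $H^2(\mc H^\bullet_1)$ from Theorem~\ref{thmmain} sits inside $\bar{\mbf C}^{\prime\bullet,\bullet}_{\GS}(\mc A)$ under $\ga\bar{\bar{\lam}}$, so as to identify it with a specific pure form of representative; once that identification is in place, the $n=2$ sub-case reduces to the immediate Koszul dimension count above.
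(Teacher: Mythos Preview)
Your argument is correct and follows essentially the same route as the paper: reduce to the Hodge $1$ component, identify $P^0_d$ as giving $(m,0,0)$-type representatives and (in the $d=n+1$ case) the extra $k$ as lying in $E_{\mathrm{res}}$ via Proposition~\ref{prop:1-dim-tangent}, then isolate $\scr S(Z^{n-3}_{d-4})$ as the only possible source of intertwined classes and eliminate it by the case split on $n$ together with the Koszul exactness \eqref{eq:intertwine} for $n=2$. Your explicit dimension count for $Z^{-1}_{d-4}$ when $n=2$ is a minor elaboration of what the paper leaves implicit, but otherwise the strategy and the key steps coincide.
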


Now let $d\geq 6$ and $F=x_0^d+x_1^{d-1}x_2$. The map $\pl_{\bov}\colon R^3_1\to R_2^{}$ in \eqref{eq:intertwine} sends $(r_0,r_1,r_2)$ to $r_0x_0+r_1x_1+r_2x_2$, whose kernel is 3-dimensional with a basis $\{(-x_1,x_0,0), (-x_2,0,x_0), (0,-x_2,x_1)\}$. Since $\scr S(Z^{-1}_1)$ arises from $\mc H_1^\bullet$, we consider the double complex
\[
\xymatrix{
	\underline{S_{x_1}\oplus S_{x_2}} \ar[r] & S_{x_1x_2} \\
	S_{x_1}^3\oplus S_{x_2}^3 \ar[r]\ar[u]^-{\pl_{\bou}} & \underline{S_{x_1x_2}^3} \ar[u]^-{\pl_{\bou}} \\
	S_{x_1}\oplus S_{x_2} \ar[r]\ar[u]^-{\pl_{\bov}} & S_{x_1x_2} \ar[u]^-{\pl_{\bov}}\ar[r] & \underline{0}
}
\]
with three entries corresponding to $\mc H_1^2$ underlined. We choose the basis element $(0,-x_2,x_1)$, and so
\[
\scr S(0,-x_2,x_1)=(0,-x_0^4x_1^{-1}x_2^{-2}, x_0^4x_1^{-2}x_2^{-1})\in S^3_{x_1x_2}.
\]
Since $\bou=(dx_0^{d-1}, (d-1)x_1^{d-2}x_2, x_1^{d-1})$, $\pl_{\bou}(\scr S(0,-x_2,x_1))$ is equal to
\[
(d-1)x_1^{d-2}x_2\cdot(-x_0^4x_1^{-1}x_2^{-2})+x_1^{d-1}\cdot x_0^4x_1^{-2}x_2^{-1}=-(d-2)x_0^4x_1^{d-3}x_2^{-1}.
\]
Choose $(0, (d-2)x_0^4x_1^{d-3}x_2^{-1})\in S_{x_1}\oplus S_{x_2}$, and thus $((0, (d-2)x_0^4x_1^{d-3}x_2^{-1}),\scr S(0,-x_2,x_1),0)$ is a 2-cocycle in $\mc H_1^\bullet$.

Let us prove that the class $\mf{c}:=[((0, (d-2)x_0^4x_1^{d-3}x_2^{-1}),\scr S(0,-x_2,x_1),0)]$ is intertwined. Assume it can be written as $[(m',0,0)]+[(0,f',0)]$, then $m':=(m'_1,m'_2)\in\ker\{S_{x_1}\oplus S_{x_2}\to S_{x_1x_2}\}$. Note that $S$, $S_{x_1}$ and $S_{x_2}$ can be regarded as $k$-submodules of $S_{x_1x_1}$ since $S$ is a domain, and that $S_{x_1}\cap S_{x_2}=S$. We then have $m'_1=m'_2$ and so $m'_2\in S$.  It follows that $m'_2+(d-2)x_0^4x_1^{d-3}x_2^{-1}\in\im\{\pl_{\bou}\colon S_{x_2}^3\to S_{x_2}\}$, say
\begin{equation}\label{eq:ex-intertwined}
m'_2+(d-2)x_0^4x_1^{d-3}x_2^{-1}=dx_0^{d-1}a_1+(d-1)x_1^{d-2}x_2a_2+x_1^{d-1}a_3 
\end{equation}
for some $a_1$, $a_2$, $a_3\in S_{x_2}$. By considering their degrees, we have
\[
a_1=\sum_{\substack{0\leq i_0< d\\i_1\geq 0}}\lambda_1^{i_0i_1}x_0^{i_0}x_1^{i_1}x_2^{1-i_0-i_1}
\]
and similarly for $a_2$, $a_3$. The right-hand side of \eqref{eq:ex-intertwined} is
\begin{align*}
	&\sum_{i_1\geq 0}d\lambda_1^{0i_1}x_0^{d-1}x_1^{i_1}x_2^{1-i_1}-\sum_{\substack{1\leq i_0<d\\i_1\geq 0}}d\lambda_1^{i_0i_1}x_0^{i_0-1}x_1^{d+i_1-1}x_2^{2-i_0-i_1}\\
	&+\sum_{\substack{0\leq i_0<d\\i_1\geq 0}}(d-1)\lambda_2^{i_0i_1}x_0^{i_0}x_1^{d+i_1-2}x_2^{2-i_0-i_1}+\sum_{\substack{0\leq i_0<d\\i_1\geq 0}}\lambda_3^{i_0i_1}x_0^{i_0}x_1^{d+i_1-1}x_2^{1-i_0-i_1}.
\end{align*}
Observe that the basis element $x_0^4x_1^{d-3}x_2^{-1}$ never appears in any term of the right-hand side, since $d\geq 6$ and $i_1\geq 0$. Together with the fact $m'_2\in S$, we get a contradiction. Thus $\mf{c}$ is indeed an intertwined class.

We remind the reader that the projective curve $x_0^d+x_1^{d-1}x_2$ has a unique singularity $(0:0:1)$.

Next let us describe how the class deforms $\mc A$ in the case $d=6$. We have $\mf U=\{U_1,U_2\}$ and $\mf V=\{V_1,V_2, V_{12}\}$, and define $\lam\colon \mf V\to \mf U$ by
\[
V_1\mapsto U_1,\quad V_2\mapsto U_2,\quad V_{12}\mapsto U_2.
\]
The algebras $A_1$, $A_2$, $A_{12}$ are expressed as $k[y_0,y_2]/(y_0^6+y_2)$, $k[y_0,y_1]/(y_0^6+y_1^5)$, $k[y_0,y_1,y_1^{-1}]/(y_0^6+y_1^5)$ respectively. By the formula \eqref{eq:bar-bar-lambda}, we obtain a 2-cocycle $(e^0,e^1,0)$ in $\mc E_1^\bullet$ given by
\begin{align*}
	& e^0_{V_1}=0,\\
	& e^0_{V_2}=-4x_0^5x_1^3x_2^{-1}|_{V_2}=-4y_0^5y_1^3\in A_2,\\
	& e^0_{V_{12}}=-4y_0^5y_1^3\in A_{12},\\
	& e^1_{V_{12}\subset V_1}=-(0,-x_0^4x_1^{-1}x_2^{-2}, x_0^4x_1^{-2}x_2^{-1})|_{V_{12}}=(0,y_0^4y_1^{-1},y_0^4y_1^{-2})\in A_{12}^3,\\
	& e^1_{V_{12}\subset V_2}=0.
\end{align*}
So by Theorem \ref{thm:q-iso-GS}, the intertwined cocycle $(m,f,0)$ is given by
\begin{align*}
	& m^{}_{V_2}=-4y_0^5y_1^3\pmul^{}_{A_2},\\
	& m^{}_{V_{12}}=-4y_0^5y_1^3\pmul^{}_{A_{12}},\\
	& f^{}_{V_{12}\subset V_1}=\biggl(-y^4_0y_1^{-2}\frac{\ppl}{\ppl y_0}+(y_0^4y_1^{-1}-y_0^4y_1^{-2})\frac{\ppl}{\ppl y_1}\biggr)\circ\rho^{V_1}_{V_{12}}
\end{align*}
and other components equal to zero, where $\rho^{V_1}_{V_{12}}\colon A_1\to A_{12}$ is the restriction map.

Unfortunately, the authors have not found any intertwined class in the case $d=5$. So we pose the following open question:

\textbf{Question:} Does an intertwined 2-class exist for a degree 5 curve in $\mbb{P}^2$?

\subsection{The second cohomology groups of quartic surfaces}\label{subsec:quartic-surface}

As we exhibited in \S\ref{subsec:intertwined}, intertwined 2-classes exist for some non-smooth curves. In contrast, by Proposition \ref{propint} such classes do not exist for higher dimensional hypersurfaces, whence for these it suffices to study $2$-cocycles of the form $(m,0,0)$, $(0,f,0)$ and $(0,0,c)$ separately. Among projective hypersurfaces, we are particularly interested in quartic surfaces in $\mbb P^3$.

From now on, let $X$ be a projective quartic surface in $\mbb P^3$, i.e.\ $n=3$ and $d=4$. By the discussion in \S\ref{subsec:compute-cohomology},
\begin{align*}
  H^2_{\GS}(\mc A)_0&\cong k;\\
  H^2_{\GS}(\mc A)_1&\cong k\oplus P^0_4;\\
  H^2_{\GS}(\mc A)_2&\cong k\oplus Q^{-2}_2.
\end{align*}

Now let us make the three deformations arising from the three components ``$k$'' above explicit, following Lemma \ref{lem:1-dim-tangent-sheaf} and formula \eqref{eq:c}. A direct computation shows that
\begin{align*}
  c^{2,0}_{123}&=x_1^{-1}x_2^{-1}x_3^{-1}\frac{\pl F}{\pl x_0}, \\
  c^{1,1}_{12}&=x_1^{-1}x_2^{-1}\biggl(\frac{\pl F}{\pl x_3}\mf f_0-\frac{\pl F}{\pl x_0}\mf f_3\biggr), \\
  c^{1,1}_{13}&=x_1^{-1}x_3^{-1}\biggl(-\frac{\pl F}{\pl x_2}\mf f_0+\frac{\pl F}{\pl x_0}\mf f_2\biggr), \\
  c^{1,1}_{23}&=x_2^{-1}x_3^{-1}\biggl(\frac{\pl F}{\pl x_1}\mf f_0-\frac{\pl F}{\pl x_1}\mf f_3\biggr), \\
  c^{0,2}_{1}&=x_1^{-1}\biggl(\frac{\pl F}{\pl x_3}\mf f_{02}-\frac{\pl F}{\pl x_2}\mf f_{03}+\frac{\pl F}{\pl x_0}\mf f_{23}\biggr), \\
  c^{0,2}_{2}&=x_2^{-1}\biggl(-\frac{\pl F}{\pl x_3}\mf f_{01}+\frac{\pl F}{\pl x_2}\mf f_{03}-\frac{\pl F}{\pl x_0}\mf f_{23}\biggr), \\
  c^{0,2}_{3}&=x_3^{-1}\biggl(\frac{\pl F}{\pl x_2}\mf f_{01}-\frac{\pl F}{\pl x_1}\mf f_{02}+\frac{\pl F}{\pl x_0}\mf f_{12}\biggr).
\end{align*}
We choose a map $\lam\colon \mc V\to \mc U$ by $\lam(V_{j_1\ldots j_r})=U_{j_r}$ if $j_1<\cdots< j_r$, and the algebra $\mc A(V_{j_1\ldots j_r})$ is expressed as $k[y_0,\ldots,y_{j_r-1},y_{j_r+1},\ldots,y_3,y_{j_1}^{-1},\ldots,y_{j_{r-1}}^{-1}]/(G_{j_r})$. By \eqref{eq:bar-bar-lambda}, $c^{2,0}$ gives rise to a $2$-cocycle $(0,0,e^2)$ in $\mc E_0$ by
\[
e^{2}_{V_{123}\subset V_{12}\subset V_1}=-x_1^{-1}x_2^{-1}x_3^{-1}\frac{\pl F}{\pl x_0}\bigg|_{V_{123}}=-y_1^{-1}y_2^{-1}\frac{\pl G_3}{\pl y_0}.
\]
This in turn gives rise to the GS cocycle $(0,0,c)$ by
\[
c^{}_{V_{123}\subset V_{12}\subset V_1}=-y_1^{-1}y_2^{-1}\frac{\pl G_3}{\pl y_0}.
\]
Using \eqref{eq:bar-bar-lambda} again, we obtain a $2$-cocycle $(0,e^1,e^2)$ in $\mc E_1$ from $(0,c^{1,1},c^{2,0})$ with $e^2$ as above and $e^1$ given by
\begin{alignat*}{2}
  e^1_{V_{12}\subset V_1}&=y_1^{-1}\biggl(-\frac{\pl G_2}{\pl y_3}\mf f_0+\frac{\pl G_2}{\pl y_0}\mf f_3\biggr), &\quad e^1_{V_{123}\subset V_1}&=y_1^{-1}\biggl(-\frac{\pl G_3}{\pl y_2}\mf f_0+\frac{\pl G_3}{\pl y_0}\mf f_2\biggr), \\
  e^1_{V_{13}\subset V_1}&=y_1^{-1}\biggl(\frac{\pl G_3}{\pl y_2}\mf f_0-\frac{\pl G_3}{\pl y_0}\mf f_2\biggr), &\quad e^1_{V_{123}\subset V_2}&=y_2^{-1}\biggl(-\frac{\pl G_3}{\pl y_1}\mf f_0+\frac{\pl G_3}{\pl y_1}\mf f_3\biggr), \\
  e^1_{V_{23}\subset V_2}&=y_2^{-1}\biggl(-\frac{\pl G_3}{\pl y_1}\mf f_0+\frac{\pl G_3}{\pl y_1}\mf f_3\biggr), &\quad e^1_{V_{123}\subset V_{12}}&=y_2^{-1}\biggl(-\frac{\pl G_3}{\pl y_1}\mf f_0+\frac{\pl G_3}{\pl y_1}\mf f_3\biggr).
\end{alignat*}
Then we can deduce a GS cocycle $(0,f,0)$ from $(0,e^1,e^2)$. Notice that the expression of $m$ is independent of $e^2$. To have the expression explicitly, by the discussion in \S\ref{sec:mixedcomp}, we only have to replace the formal base element $\mf f_i$ by $\ppl/\ppl y_i$, then compose with the restriction map. For example,
\[
m^{}_{V_{12}\subset V_1}=y_1^{-1}\biggl(-\frac{\pl G_2}{\pl y_3}\frac{\ppl}{\ppl y_0}+\frac{\pl G_2}{\pl y_0}\frac{\ppl}{\ppl y_3}\biggr)\circ\rho^{V_1}_{V_{12}},
\]
and so on. Likewise, we conclude that the cocycle $(e^0,e^1,e^2)$ in $\mc E_2$ induced by $(c^{0,2},c^{1,1},c^{2,0})$ has the form
\begin{align*}
  e^{0}_{V_1}&=\frac{\pl G_1}{\pl y_3}\mf f_{02}-\frac{\pl G_1}{\pl y_2}\mf f_{03}+\frac{\pl G_1}{\pl y_0}\mf f_{23}, \\
  e^{0}_{V_2}&=-\frac{\pl G_2}{\pl y_3}\mf f_{01}+\frac{\pl G_2}{\pl y_2}\mf f_{03}-\frac{\pl G_2}{\pl y_0}\mf f_{13}, \\
  e^{0}_{V_3}&=\frac{\pl G_3}{\pl y_2}\mf f_{01}-\frac{\pl G_3}{\pl y_1}\mf f_{02}+\frac{\pl G_3}{\pl y_0}\mf f_{12}.
\end{align*}
Thus $(e^0,e^1,e^2)$ induces the GS cocycle $(m,0,0)$ given by
\begin{align}
m^{}_{V_1}&=\frac{\pl G_1}{\pl y_3}\cdot\frac{\ppl }{\pl y_0}\cup\frac{\ppl}{\pl y_2}-\frac{\pl G_1}{\pl y_2}\cdot\frac{\ppl }{\pl y_0}\cup\frac{\ppl}{\pl y_3}+\frac{\pl G_1}{\pl y_0}\cdot\frac{\ppl }{\pl y_2}\cup\frac{\ppl}{\pl y_3},\notag\\
m^{}_{V_2}&=-\frac{\pl G_2}{\pl y_3}\cdot\frac{\ppl }{\pl y_0}\cup\frac{\ppl}{\pl y_1}+\frac{\pl G_2}{\pl y_2}\cdot\frac{\ppl }{\pl y_0}\cup\frac{\ppl}{\pl y_3}+\frac{\pl G_2}{\pl y_2}\cdot\frac{\ppl }{\pl y_1}\cup\frac{\ppl}{\pl y_3},\label{eq:unique-biderivation}\\
m^{}_{V_3}&=\frac{\pl G_3}{\pl y_2}\cdot\frac{\ppl }{\pl y_0}\cup\frac{\ppl}{\pl y_1}-\frac{\pl G_3}{\pl y_1}\cdot\frac{\ppl }{\pl y_0}\cup\frac{\ppl}{\pl y_2}+\frac{\pl G_3}{\pl y_0}\cdot\frac{\ppl }{\pl y_1}\cup\frac{\ppl}{\pl y_2}.\notag
\end{align}

Let us look into the dimensions of $H^2_{\GS}(\mc A)_r$ for $r=0$, $1$, $2$. Obviously, $\dim H^2_{\GS}(\mc A)_0=1$. Since $P^0_4=(S/(\im\pl_{\bou}))_4=(R/(\im\pl_{\bou}))_4=R_4/\sum_{i,j=0}^3kx_i\cdot\pl F/\pl x_j$, we have the following inequality
\[
\dim P^0_4=\dim R_4-\dim\sum_{i,j=0}^3x_i\frac{\pl F}{\pl x_j}=35-\dim\sum_{i,j=0}^3kx_i\frac{\pl F}{\pl x_j}\geq 35-16=19.
\]
Next we investigate the upper bound of $\dim P^0_4$. Obviously, $\{x_i\cdot\pl F/\pl x_j\}_{0\leq i\leq 3}$ is $k$-linearly independent provided that $\pl F/\pl x_j\neq 0$. In particular, $\dim\sum_{i=0}^3kx_i\cdot\pl F/\pl x_0=4$ and hence $\dim P^0_4\leq 31$. Interestingly, there is a gap between 31 and other possible dimensions. Let us prove

\begin{lemma}
  If $\dim P^0_4\neq 31$, then $19\leq\dim P^0_4\leq 28$.
\end{lemma}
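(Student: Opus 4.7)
The plan is to translate the assertion into a question about $\dim W$, where $W = \sum_{i,j=0}^{3} k\,x_i g_j \subseteq R_4$ and $g_j := \partial F/\partial x_j$: since $\dim P^0_4 = 35 - \dim W$ and $4 \leq \dim W \leq 16$ is already in hand, the statement amounts to ruling out $\dim W \in \{5,6\}$ under the hypothesis $\dim W \neq 4$. I would write $V_j := R_1 g_j$, assume without loss of generality that $g_0 \neq 0$, and note that $\dim V_0 = 4$ because $R$ is a domain.

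The central tool I would prove first is the following Key Lemma: if $g \in R_3$ is not a scalar multiple of $g_0$, then $\dim_k(V_0 \cap R_1 g) \leq 1$. The argument uses that $R$ is a UFD. An element of the intersection is $\ell g_0 = \ell' g$ with $\ell, \ell' \in R_1$; setting $h = \gcd(g_0, g)$ and writing $g_0 = h\alpha$, $g = h\beta$ with $\gcd(\alpha, \beta) = 1$, this reduces to $\ell \alpha = \ell' \beta$, which forces $\beta \mid \ell$. Since $\deg \beta = 3 - \deg h \geq 1$ (from $g \notin kg_0$) and $\deg \ell = 1$, solutions are trivial whenever $\deg \beta \geq 2$, and form a one-dimensional family $\ell = c\beta$, $\ell' = c\alpha$ exactly when $\deg \beta = 1$ (i.e.\ $\deg h = 2$).

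Given this lemma, I would complete the proof as follows. Suppose $\dim W > 4$. I claim some $g_{j^*}$ is not proportional to $g_0$; otherwise, every nonzero $g_j$ is a scalar multiple of $g_0$, and introducing new coordinates $y_0 = x_0 + \sum_{j \geq 1} c_j x_j$, $y_j = x_j$ so that $\partial F/\partial y_j = 0$ for $j \geq 1$ shows that $F = \lambda y_0^4$ is a fourth power of a linear form, whence $W = V_0$ has dimension $4$, a contradiction. For this $j^*$, the Key Lemma gives $\dim(V_0 \cap V_{j^*}) \leq 1$, so the composite $V_{j^*} \hookrightarrow W \twoheadrightarrow W/V_0$ has image of dimension at least $4 - 1 = 3$. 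Consequently $\dim(W/V_0) \geq 3$ and $\dim W \geq 7$, which combined with the known bounds yields $19 \leq \dim P^0_4 \leq 28$.

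The hard part will be the Key Lemma itself — once the gcd case analysis is clean, the Grassmann dimension bookkeeping in $W/V_0$ is immediate. The only other subtlety is the linear change of coordinates that pins down the extremal case $\dim W = 4$ as precisely the one where $F$ is a fourth power of a linear form, which is needed to force the existence of a non-proportional $g_{j^*}$.
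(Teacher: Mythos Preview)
Your proof is correct and reaches the same conclusion $\dim W \geq 7$ as the paper, but via a genuinely different mechanism. The paper first performs the Tschirnhaus substitution $x_0 \mapsto x_0 - \tfrac{1}{4}f_1$ to kill the $x_0^3$-coefficient of $F$, and then argues by comparing $x_0$-degrees: after this normalization $\partial F/\partial x_0$ has $x_0$-degree exactly $3$ while any $\partial F/\partial x_j$ with $j\geq 1$ has $x_0$-degree at most $2$, so no nonzero element of $(kx_1+kx_2+kx_3)\,\partial F/\partial x_1$ can lie in $V_0$. This yields a direct sum $V_0 \oplus \bigoplus_{i=1}^{3} kx_i\,\partial F/\partial x_1$ of dimension $7$.

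Your route bypasses the coordinate normalization entirely and instead exploits the UFD structure of $R$: the gcd analysis in your Key Lemma gives the sharper and coordinate-free statement $\dim(V_0 \cap R_1 g) \leq 1$ for any cubic $g \notin k g_0$, from which $\dim(V_0 + V_{j^*}) \geq 7$ follows. One minor simplification: you do not actually need the change of variables showing $F = \lambda y_0^4$ in the extremal case. If every nonzero $g_j$ were a scalar multiple of $g_0$, then each $V_j \subseteq V_0$ directly, hence $W = V_0$ has dimension $4$; this already contradicts $\dim W > 4$ and furnishes the required $g_{j^*}$. The paper's approach is more elementary (pure degree bookkeeping), while yours is more structural and would adapt without change to other ambient dimensions or degrees.
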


\begin{proof}
  Suppose $F=x_0^4+f_1x_0^3+f_2x_0^2+f_3x_0+f_4$ where $f_t\in k[x_1,x_2,x_3]$ are homogeneous of degree $t$.

  First of all, let us reduce the lemma to the case $f_1=0$. In fact, $\dim P^0_4=\dim H^2_{\GS}(\mc A)_1-1$ is invariant under isomorphism of surfaces. By an argument similar to the argument presented in the paragraph after Theorem \ref{thm:q-iso-GS}, $f_1$ can be annihilated via the isomorphism
  \[
  x_0\mapsto x_0-\frac{1}{\,4\,}f_1,\quad x_j\mapsto x_j \quad (\,j=1,\,2,\,3\,).
  \]

  Now we safely assume $f_1=0$. Since $\dim P^0_4\neq 31$, one of $\pl F/\pl x_1$, $\pl F/\pl x_2$, $\pl F/\pl x_3$ is nonzero, say $\pl F/\pl x_1\neq 0$. By comparing the degrees of $\pl F/\pl x_0$ and $\pl F/\pl x_1$ with respect to $x_0$, we obtain
  \[
  (\lam_1 x_1+\lam_2 x_2+\lam_3 x_3)\frac{\pl F}{\pl x_1}\in \sum_{l=0}^3kx_l\frac{\pl F}{\pl x_0}
  \]
  for some $\lam_1$, $\lam_2$, $\lam_3\in k$ only when $\lam_1=\lam_2=\lam_3=0$. Hence
  \[
  \sum_{i,j=0}^3kx_i\frac{\pl F}{\pl x_j}\supseteq\sum_{i=0}^3kx_i\frac{\pl F}{\pl x_0}+\sum_{i=1}^3kx_i\frac{\pl F}{\pl x_1}=\bigoplus_{i=0}^3kx_i\frac{\pl F}{\pl x_0}\oplus \bigoplus_{i=1}^3kx_i\frac{\pl F}{\pl x_1}\cong k^7.
  \]
  It follows that $\dim P^0_4\leq 35-7=28$.
\end{proof}

Therefore, $\dim H^2_{\GS}(\mc A)_1\in\{20,\ldots,29\}\cup\{32\}$. The dimension indeed reaches every number in the set. We list some examples in Table \ref{tabel:dim-GS-1} showing this fact. By Lemma \ref{lem:mult-res}, we are able to check if $t\in P^0_4$ also corresponds to a class in $H^1(X,\mc T_X)$. Accordingly, the dimensions of $H^1(X,\mc T_X)$ for these examples can be computed, as listed in the third column.
\renewcommand\arraystretch{1.3}
\begin{table}[htbp]
\begin{tabular}{|c|c|c|c|}
  \hline
  % after \\: \hline or \cline{col1-col2} \cline{col3-col4} ...
  $F$ & $\dim H^2_\mathrm{GS}(\mc A)_1$ & $\dim H^1(X,\mc T_X)$ & $\dim H^2_\mathrm{GS}(\mc A)_2$ \\
  \hline
  $x_0^4+x_1^4+x_2^4+x_3^4$ & 20 & 20 & 1 \\
  $(x_0^2+x_1^2)^2+x_2^4+x_3^4$ & 21 & 4 & 1 \\
  $(x_0^2+x_1^2)^2+(x_2^2+x_3^2)^2$ & 22 & 2 & 2 \\
  $(x_0^2+x_1^2+x_2^2)^2+x_3^4$ & 23 & 2 & 5 \\
  $x_0^4+x_1^4+x_2^4$ & 24 & 1 & 1 \\
  $(x_0^2+x_1^2)^2+x_2^4$ & 25 & 1 & 5 \\
  $(x_0^2+x_1^2+x_2^2+x_3^2)^2$ & 26 & 1 & 17 \\
  $(x_0^2+x_1^2+x_2^2)^2$ & 27 & 1 & 17 \\
  $x_0^4+x_1^4$ & 28 & 1 & 11 \\
  $(x_0^2+x_1^2)^2$ & 29 & 1 & 11 \\
  $x_0^4$ & 32 & 1 & 31 \\
  \hline
\end{tabular}
\caption{dimensions of several groups}
\label{tabel:dim-GS-1}
\end{table}

For $r=2$, the group $Q^{-2}_2$ comes from the complex
\[
S^6_2/\im\pl_{\bov}\xrightarrow[\quad]{\pl_{\bou}} S^4_5/\im\pl_{\bov}\xrightarrow[\quad]{\pl_{\bou}} S_8
\]
by \eqref{eq:quotient-complex}. It fits into a projection
\[
\xymatrix@C=14mm{
R^6_2 \ar[r]^-{\pl_{\bou}}\ar@{->>}[d] & R^4_5 \ar[r]^-{\pl_{\bou}}\ar@{->>}[d] & R_8 \ar@{->>}[d] \\
S^6_2/\im\pl_{\bov} \ar[r]^-{\pl_{\bou}} & S^4_5/\im\pl_{\bov} \ar[r]^-{\pl_{\bou}} & S_8
}
\]
of complexes. By Euler's formula, the projection turns out to be a quasi-isomorphism. Hence $Q^{-2}_2\cong\ker\{\pl_{\bou}\colon R^6_2\to R^4_5\}$. The dimension of the latter is easier to compute than that of $Q^{-2}_2$. Let elements in $R^6_2$ be expressed by
\[
(a_{01},a_{02},a_{03},a_{12},a_{13},a_{13}).
\]
If $F=x_0^4+(x_1^2+x_2^2)^2$, then
\[
\ker\{\pl_{\bou}\colon R^6_2\to R^4_5\}=\{(0,0,0,0,x_2u,-x_1u)\mid u\in R_1\}
\]
and hence $Q_2^{-2}$ is equal to
\[
\{(0,0,0,0,x_2u,-x_1u)+\im\pl_{\bov}\mid u\in S_1\}
\]
whose dimension is $4$; if $F=(x_0^2+x_1^2+x_2^2+x_3^2)^2$, then $Q_2^{-2}$ is equal to the (direct) sum of
\begin{gather*}
\{(0,x_3u,-x_2u,0,0,x_0u)+\im\pl_{\bov}\mid u\in S_1\},\\
\{(x_3v,0,-x_1v,0,x_0v,0)+\im\pl_{\bov}\mid v\in S_1\},\\
\{(x_2p,-x_1p,0,x_0p,0,0)+\im\pl_{\bov}\mid p\in S_1\},\\
\{(0,0,0,x_3q,-x_2q,x_0q)+\im\pl_{\bov}\mid q\in S_1\},
\end{gather*}
and so $\dim Q_2^{-2}=16$. We omit the computational details and list the dimensions of $H^2_{\GS}(\mc A)_2$ of these examples in the right column of Table \ref{tabel:dim-GS-1}. It is obvious that the lower bound of $\dim H^2_{\GS}(\mc A)_2$ is $1$. However, in the general case, the authors do not know either the upper bound of $\dim H^2_{\GS}(\mc A)_2$, or any gaps between the bound and $1$.

%It is obvious that $\dim H^2_{\GS}(\mc A)_2\geq 1$, and the equality holds if $X$ is smooth, since in this case $X$ is a K3 surface, $(m,0,0)$ given in \eqref{eq:unique-biderivation} is a base element of $\dim H^2_{\GS}(\mc A)_2$, and the situation is well understood.
Recall that when $X$ is smooth, the Hodge numbers of $X$ are defined to be $h^{p,q} = \dim H^q(X, \Om_X^p)$. Let $\om_X=\Om_X^2$ be the canonical sheaf of $X$. Then $\om_X\cong \mc O_X$ and by \cite[Cor.\ 3.1.4]{Buchweitz-Flenner:decomp-Atiyah},
\[
H^2_{\GS}(\mc A)\cong HH^2(\om_X)\cong H^2(X,\Om^2_X)\oplus H^1(X,\Om_X)\oplus H^0(X,\mc O_X).
\]
The dimensions of the three summands are $h^{2,2}=1$, $h^{1,1}=20$, $h^{0,0}=1$ respectively. So $\dim H^2_{\GS}(\mc A)_r$ reaches its smallest possible values for $r=0$, $1$, $2$ if $X$ is smooth.

The converse is not true, as there indeed exist non-smooth surfaces with $\dim H^2_{\GS}(\mc A)_1=20$ and $\dim H^2_{\GS}(\mc A)_0=\dim H^2_{\GS}(\mc A)_2=1$. Let us give two examples here.

\begin{example}\label{ex:singu-minimal}
Let $F=x_0^4+x_1^4+x_2^4-4x_2x_3^3+3x_3^4$. We know $\bou=(4x_0^3,4x_1^3,4(x_2^3-x_3^3), -12 x_3^2(x_2-x_3))$. A direct computation shows that $\dim P_4^0=19$ and $\dim Q_2^{-2}=0$. Note that the surface has three isolated singularities $(0:0:1:\zeta^r)$ for $r=0$, 1, 2 where $\zeta$ is a primitive third root of 1. Furthermore, we have $\dim H^1(X,\mc T_X)=11$, in accordance with Theorem \ref{thm:smooth-equiv}.
\end{example}

\begin{example}\label{ex:kummer}
The Kummer surfaces $K_{\mu}$ are a family of quartic surfaces given by
\[
F=(x_0^2+x_1^2+x_2^2-\mu^2x_3^2)^2-\lam pqrs
\]
where
\[
\lam=\frac{3\mu^2-1}{3-\mu^2}
\]
and  $p$, $q$, $r$, $s$ are the tetrahedral coordinates
\begin{alignat*}{2}
p&=x_3-x_2-\sqrt{2}x_0, &\quad q&=x_3-x_2+\sqrt{2}x_0, \\
r&=x_3+x_2+\sqrt{2}x_1, &\quad s&=x_3+x_2-\sqrt{2}x_1.
\end{alignat*}
When $\mu^2\neq 1/3$, $1$, or $3$, $K_{\mu}$ has $16$ isolated singularities which are ordinary double points. In this case, one can check that $\bou$ is a regular sequence in $R$. Thus $\dim P_4^0=19$ and $\dim Q_2^{-2}=0$. We also have $\dim H^1(X,\mc T_X)=1$, in accordance with Theorem \ref{thm:smooth-equiv}.
\end{example}

The examples given above with $\dim H^0(X, \wedge^2\mc T_X)=\dim H^2_{\GS}(\mc A)_2=1$ are all integral, and vice versa. We will give two examples to show this condition is neither necessary nor sufficient for integrality of $X$.

\begin{example}
Let $F=(x_0^2+x_1^2+2x_2^2)(x_0^2+x_1^2+2x_3^2)$. We can easily prove $Q^{-2}_2=0$ and hence $\dim H^0(X, \wedge^2\mc T_X)=1$. However, this is not integral.
\end{example}

\begin{example}
Let $F=x_0^4+x_1^3x_2$. This gives rise to an integral scheme. But $Q_2^{-2}$ is spanned by
\[
(0,0,0,0,x_1u,-x_2u)+\im\pl_{\bov},\quad u\in \{x_0,x_1,x_2,x_3\}
\]
which is $4$-dimensional.
\end{example}

According to our general results, for a smooth K3 surface, we have $P^0_4 = E_{\mathrm{mult}} \subseteq H^2_{\mathrm{GS}}(\aaa)_1 = E_{\mathrm{res}}$ and $\dim P^0_4 = 19$.
To end this section, let us present the resulting two different deformation interpretations of Hochschild $2$-classes in $P^0_4$ for the Fermat quartic surface, i.e. the first example in Table \ref{tabel:dim-GS-1}. Since $\bou=(4x_0^3, 4x_1^3, 4x_2^3, 4x_3^3)$, $P_4^0$ has a basis
\[
\bigl\{x_0^{i_0}x_1^{i_1}x_2^{i_2}x_3^{i_3}\mid i_0+i_1+i_2+i_3=4,\,0\leq i_0,i_1,i_2,i_3\leq 2\bigr\}.
\]
We fix the generators and relations of $\mc A(V)$ for all $V\in\mf V$ as follows:
\begin{align*}
A_1&=k[y_0,y_2,y_3]/(y_0^4+y_2^4+y_3^4+1), & A_2&=k[y_0,y_1,y_3]/(y_0^4+y_1^4+y_3^4+1), \\
A_3&=k[y_0,y_1,y_2]/(y_0^4+y_1^4+y_2^4+1), & A_{12}&=k[y_0,y_1,y_3,y_1^{-1}]/(y_0^4+y_1^4+y_3^4+1), \\
A_{13}&=k[y_0,y_1,y_2,y_1^{-1}]/(y_0^4+y_1^4+y_2^4+1), & A_{23}&=k[y_0,y_1,y_2,y_2^{-1}]/(y_0^4+y_1^4+y_2^4+1), \\
A_{123}&=k[y_0,y_1,y_2,y_1^{-1},y_2^{-1}]/(y_0^4+y_1^4+y_2^4+1).
\end{align*}
For any basis element $x_0^{i_0}x_1^{i_1}x_2^{i_2}x_3^{i_3}\in P_4^0$, there is a deformation $(m,0,0)$ of $\mc A$ given by
\begin{alignat*}{3}
m^{}_{V_1}&=y_0^{i_0}y_2^{i_2}y_3^{i_3}\pmul, & \quad m^{}_{V_2}&=y_0^{i_0}y_1^{i_1}y_3^{i_3}\pmul, & \quad m^{}_{V_3}&=y_0^{i_0}y_1^{i_1}y_2^{i_2}\pmul, \\
m^{}_{V_{12}}&=y_0^{i_0}y_1^{i_1}y_3^{i_3}\pmul, & \quad m^{}_{V_{13}}&=y_0^{i_0}y_1^{i_1}y_2^{i_2}\pmul, & \quad m^{}_{V_{23}}&=y_0^{i_0}y_1^{i_1}y_2^{i_2}\pmul, \\
& & m^{}_{V_{123}}&=y_0^{i_0}y_1^{i_1}y_2^{i_2}\pmul.
\end{alignat*}
We remark that although the same notation $\pmul$ is used, it stands for Hochschild 2-cocycles of individual algebras.

Since in $A_1$ one has
\[
1=4y_0^3\biggl(-\frac{1}{4}y_0\biggr)+4y_2^3\biggl(-\frac{1}{4}y_2\biggr)+4y_3^3\biggl(-\frac{1}{4}y_3\biggr),
\]
it follows that
\[
\pmul=d_{\Hoch}\biggl(-\frac{1}{4}y_0\frac{\ppl}{\ppl y_0}-\frac{1}{4}y_2\frac{\ppl}{\ppl y_2}-\frac{1}{4}y_3\frac{\ppl}{\ppl y_3}\biggr).
\]
Similarly, for $A_2$ and $A_3$, we respectively have
\begin{gather*}
\pmul=d_{\Hoch}\biggl(-\frac{1}{4}y_0\frac{\ppl}{\ppl y_0}-\frac{1}{4}y_1\frac{\ppl}{\ppl y_1}-\frac{1}{4}y_3\frac{\ppl}{\ppl y_3}\biggr),\\
\pmul=d_{\Hoch}\biggl(-\frac{1}{4}y_0\frac{\ppl}{\ppl y_0}-\frac{1}{4}y_1\frac{\ppl}{\ppl y_1}-\frac{1}{4}y_2\frac{\ppl}{\ppl y_2}\biggr).
\end{gather*}
The three preimages are denoted by $s_1$, $s_2$, $s_3$. By abuse of notation, they also denote 1-cochains of the algebras $A_{12}$, $A_{13}$ and so on. Then we have
\begin{align*}
m^{}_{V_1}&=d_{\Hoch}(y_0^{i_0}y_2^{i_2}y_3^{i_3}s_1), & m^{}_{V_2}&=d_{\Hoch}(y_0^{i_0}y_1^{i_1}y_3^{i_3}s_2), \\
m^{}_{V_3}&=d_{\Hoch}(y_0^{i_0}y_1^{i_1}y_2^{i_2}s_3), & m^{}_{V_{12}}&=d_{\Hoch}(y_0^{i_0}y_1^{i_1}y_3^{i_3}s_2), \\
m^{}_{V_{13}}&=d_{\Hoch}(y_0^{i_0}y_1^{i_1}y_2^{i_2}s_3), & m^{}_{V_{23}}&=d_{\Hoch}(y_0^{i_0}y_1^{i_1}y_2^{i_2}s_3), \\
m^{}_{V_{123}}&=d_{\Hoch}(y_0^{i_0}y_1^{i_1}y_2^{i_2}s_3).
\end{align*}
We choose a map $\lam\colon \mc V\to \mc U$ by $\lam(V_{j_1\ldots j_r})=U_{j_r}$ if $j_1<\cdots< j_r$. We thus obtain an equivalent deformation $(0,f,0)$ whose nonzero components of $f$ are
\begin{align*}
f^{}_{V_{12}\subseteq V_1}&=y_0^{i_0}y_1^{i_1}y_3^{i_3}s_2\circ \rho^{V_1}_{V_{12}}-\rho^{V_1}_{V_{12}}\circ y_0^{i_0}y_2^{i_2}y_3^{i_3}s_1,\\
f^{}_{V_{13}\subseteq V_1}&=y_0^{i_0}y_1^{i_1}y_2^{i_2}s_3\circ \rho^{V_1}_{V_{13}}-\rho^{V_1}_{V_{13}}\circ y_0^{i_0}y_2^{i_2}y_3^{i_3}s_1,\\
f^{}_{V_{23}\subseteq V_2}&=y_0^{i_0}y_1^{i_1}y_2^{i_2}s_3\circ \rho^{V_2}_{V_{23}}-\rho^{V_2}_{V_{23}}\circ y_0^{i_0}y_1^{i_1}y_3^{i_3}s_2,\\
f^{}_{V_{123}\subseteq V_1}&=y_0^{i_0}y_1^{i_1}y_2^{i_2}s_3\circ \rho^{V_1}_{V_{123}}-\rho^{V_1}_{V_{123}}\circ y_0^{i_0}y_2^{i_2}y_3^{i_3}s_1,\\
f^{}_{V_{123}\subseteq V_2}&=y_0^{i_0}y_1^{i_1}y_2^{i_2}s_3\circ \rho^{V_2}_{V_{123}}-\rho^{V_2}_{V_{123}}\circ y_0^{i_0}y_1^{i_1}y_3^{i_3}s_2,\\
f^{}_{V_{123}\subseteq V_{12}}&=y_0^{i_0}y_1^{i_1}y_2^{i_2}s_3\circ \rho^{V_{12}}_{V_{123}}-\rho^{V_{12}}_{V_{123}}\circ y_0^{i_0}y_1^{i_1}y_3^{i_3}s_2.
\end{align*}

\providecommand{\bysame}{\leavevmode\hbox to3em{\hrulefill}\thinspace}
\providecommand{\MR}{\relax\ifhmode\unskip\space\fi MR }
% \MRhref is called by the amsart/book/proc definition of \MR.
\providecommand{\MRhref}[2]{%
  \href{http://www.ams.org/mathscinet-getitem?mr=#1}{#2}
}
\providecommand{\href}[2]{#2}


\begin{thebibliography}{10}

\bibitem{SGA6}
\emph{Th\'eorie des intersections et th\'eor\`eme de {R}iemann-{R}och}, Lecture
  Notes in Mathematics, vol. 225, Springer-Verlag, Berlin-New York, 1971,
  S{\'e}minaire de G{\'e}om{\'e}trie Alg{\'e}brique du Bois-Marie 1966--1967
  (SGA 6), Dirig{\'e} par P. Berthelot, A. Grothendieck et L. Illusie. Avec la
  collaboration de D. Ferrand, J. P. Jouanolou, O. Jussila, S. Kleiman, M.
  Raynaud et J. P. Serre.

\bibitem{BACH}
Buenos {A}ires {C}yclic~{H}omology {G}roup, \emph{Hochschild and cyclic
  homology of hypersurfaces}, Adv. Math. \textbf{95} (1992), 18--60.

\bibitem{Brylinski:loop-space}
J.-L. Brylinski, \emph{Loop spaces, characteristic classes and geometric
  quantization}, Progress in Mathematics, vol. 107, Birkh\"auser Boston, Inc.,
  Boston, MA, 1993.

\bibitem{Buchweitz-Flenner:decomp-Atiyah}
R.-O. Buchweitz and H.~Flenner, \emph{The global decomposition theorem for
  {H}ochschild (co-)homology of singular spaces via the {A}tiyah-{C}hern
  character}, Adv. Math. \textbf{217} (2008), 243--281.

\bibitem{Buchweitz-Flenner:global-Hochschild}
\bysame, \emph{Global {H}ochschild (co-)homology of singular spaces}, Adv.
  Math. \textbf{217} (2008), 205--242.

\bibitem{DLL:defo-qch}
H.~Dinh~{V}an, L.~Liu, and W.~Lowen, \emph{Non-commutative deformations and
  quasi-coherent modules}, arXiv:1411.0331, 2014.

\bibitem{gerstenhaberschack}
M.~Gerstenhaber and S.~D. Schack, \emph{On the deformation of algebra morphisms
  and diagrams}, Trans. Amer. Math. Soc. \textbf{279} (1983), 1--50.

\bibitem{gerstenhaberschackhodge}
\bysame, \emph{A {H}odge-type decomposition for commutative algebra
  cohomology}, J. Pure Appl. Algebra \textbf{48} (1987), 229--247.

\bibitem{Gerstenhaber-Schack:survey}
\bysame, \emph{Algebraic cohomology and deformation theory}, Deformation theory
  of algebras and structures and applications (Il Ciocco, 1986), NATO Adv. Sci.
  Inst. Ser. C Math. Phys. Sci., vol. 247, Kluwer Acad. Publ., Dordrecht, 1988,
  pp.~11--264.

\bibitem{gerstenhaberschack2}
\bysame, \emph{The cohomology of presheaves of algebras. {I}. {P}resheaves over
  a partially ordered set}, Trans. Amer. Math. Soc. \textbf{310} (1988),
  135--165.

\bibitem{Hartshorne:alg-geom}
R.~Hartshorne, \emph{Algebraic geometry}, Graduate Texts in Mathematics,
  vol.~52, Springer-Verlag, New York-Heidelberg, 1977.

\bibitem{Illusie:cot-comp-2}
L.~Illusie, \emph{Complexe cotangent et d\'eformations. {II}}, Lecture Notes in
  Mathematics, vol. 283, Springer-Verlag, Berlin-New York, 1972.

\bibitem{Illusie:cot-comp-1}
Luc Illusie, \emph{Complexe cotangent et d\'eformations. {I}}, Lecture Notes in
  Mathematics, vol. 239, Springer-Verlag, Berlin-New York, 1971.

\bibitem{kellerdih}
B.~Keller, \emph{Derived invariance of higher structures on the {H}ochschild
  complex}, preprint
  \texttt{http://www.math.jussieu.fr/\~{}keller/publ/dih.dvi}.

\bibitem{Knudson:smooth-affine}
D.~W. Knudson, \emph{On the deformation of commutative algebras}, Trans. Amer.
  Math. Soc. \textbf{140} (1969), 55--70.

\bibitem{kontsevich2}
M.~Kontsevich, \emph{Homological algebra of mirror symmetry}, Proceedings of
  the International Congress of Mathematicians, Vol.\ 1, 2 (Z\"urich, 1994)
  (Basel), Birkh\"auser, 1995, pp.~120--139.

\bibitem{lowenvandenbergh:curv}
W.~Lowen and M.~Van~den Bergh, \emph{The curvature problem for formal and
  infinitesimal deformations}, Preprint arXiv:1505.03698.

\bibitem{Lowen-VandenBergh:hoch}
\bysame, \emph{Hochschild cohomology of abelian categories and ringed spaces},
  Adv. Math. \textbf{198} (2005), 172--221.

\bibitem{lowenvandenberghab}
\bysame, \emph{Deformation theory of abelian categories}, Trans. Amer. Math.
  Soc. \textbf{358} (2006), 5441--5483.

\bibitem{Michler:hypersurface}
R.~I. Michler, \emph{Hodge-components of cyclic homology for affine
  quasi-homogeneous hypersurfaces}, Ast\'erisque (1994), 321--333.

\bibitem{Takeshi:exterior-prod-2-terms}
T.~Saito, \emph{Parity in {B}loch's conductor formula in even dimension}, J.
  Th\'eor. Nombres Bordeaux \textbf{16} (2004), 403--421.

\bibitem{Schuhmacher:decomp-Noetherian-scheme}
F.~Schuhmacher, \emph{Hochschild cohomology of complex spaces and {N}oetherian
  schemes}, Homology Homotopy Appl. \textbf{6} (2004), 299--340.

\bibitem{swan}
R.~G. Swan, \emph{Hochschild cohomology of quasiprojective schemes}, J. Pure
  Appl. Algebra \textbf{110} (1996), 57--80.

\bibitem{wolffhardt}
K.~Wolffhardt, \emph{The {H}ochschild homology of complete intersections},
  Trans. Amer. Math. Soc. \textbf{171} (1972), 51--66. \MR{0306192 (46 \#5319)}

\bibitem{yekutieli}
A.~Yekutieli, \emph{The continuous {H}ochschild cochain complex of a scheme},
  Canad. J. Math. \textbf{54} (2002), 1319--1337.

\end{thebibliography}
\end{document}